\pgfplotsset{compat=1.13}
\newtheorem{theorem}{Theorem}
\newtheorem{lemma}{Lemma}[section]
\newtheorem{corollary}[lemma]{Corollary}
\newtheorem{proposition}[lemma]{Proposition}
\newtheorem{remark}[lemma]{Remark}
\def\Cl{\mathfrak{C}_l}
\def\Cone{\mathfrak{C}_1}
\def\e{\varepsilon}
\def\wto{\rightharpoonup}
\def\wtos{\stackrel{*}{\wto}}
\def\vp{\varphi}
\def\R{\mathbb{R}}
\newcommand{\Sp}{\mathbb{S}}
\def\Z{\mathbb{Z}}
\def\H{\mathcal{H}}
\def\E{\mathcal{E}}
\def\LM#1{\hbox{\vrule width.2pt \vbox to#1pt{\vfill \hrule width#1pt
height.2pt}}}
\def\LL{{\mathchoice {\>\LM7\>}{\>\LM7\>}{\,\LM5\,}{\,\LM{3.35}\,}}}
\def\restr{{\LL}}
\def\Div{\textup{div}\,}
\def\dom{\textup{dom}\,}
\def\dist{\textup{dist}\,}
\def\spt{\textup{spt}}
\def\utheta{\underline{\theta}}
\def\dx{\delta_x}
\def\dt{\delta_\theta}
\def\M{\mathcal{M}} %% measures
\def\Om{\Omega}
\newcommand\OS[1][]{{\Om_{#1}\times\Sp^1}}
\def\ov{\overline}
\def\ale{\textit{a.e.}}
\newcommand{\sca}[2]{{\left\langle {#1},{#2}\right\rangle}}
\def\12{\tfrac{1}{2}}
\def\ha{\frac{1}{2}}
\newcommand{\norm}[2][]{\|{#2}\|_{{#1}}}
\def\i{\mathbf{i}}
\def\j{\mathbf{j}}
\def\I{\mathcal{I}}
\def\J{\mathcal{J}}
\def\F{\mathscr{F}}
\begin{document}

\title{Total Roto-Translational Variation} \author{Antonin
  Chambolle\thanks{CMAP, Ecole Polytechnique, CNRS, 91128 Palaiseau,
    France. \newline email: \texttt{antonin.chambolle@cmap.polytechnique.fr}}, and Thomas
  Pock\thanks{Institute for Computer Graphics and Vision, Graz
    University of Technology, 8010 Graz, Austria and Center for
    Vision, Automation \& Control, AIT Austrian Institute of
    Technology GmbH, 1220 Vienna, Austria. email:
    \texttt{pock@icg.tugraz.at} }}

 \maketitle

\begin{abstract}
  We consider curvature depending variational
  models for image regularization, such as Euler's elastica.
  These models are known to provide
  strong priors for the continuity of edges and hence have
  important applications in shape- and image processing. We consider a
  lifted convex representation of these models in the roto-translation
  space: %$\OS$.
  In this space, curvature depending variational energies are represented
  by means of a convex functional defined on divergence free vector
  fields. The line energies are then easily extended to any scalar function.
  It yields
  % Therefore, our model can be seen as 
  a natural
  generalization of the total variation to the roto-translation
  space. As our main result, we show that the proposed convex
  representation is tight for characteristic functions of smooth
  shapes. We also discuss cases where this representation
  fails. For numerical solution, we propose a staggered grid
  discretization based on an averaged Raviart-Thomas finite elements
  approximation. This discretization is consistent, up to minor
  details, with
  the underlying continuous model. The resulting non-smooth convex
  optimization problem is solved using a first-order primal-dual
  algorithm. We illustrate the results of our numerical algorithm on
  various problems from shape- and image processing.
\\
 \textbf{Keywords:} Image processing, shape processing, image
inpainting, curvature, Elastica, roto-translations, convex relaxation,
total variation.
\\
 \textbf{AMS MSC (2010):} 53A04 % Curves in Euclidean space
\; 49Q20 % Variational problems in a geometric measure-theoretic setting
\; 26A45 % Functions of bounded variation, generalizations
\; 35J35 % Variational methods for higher-order elliptic equations
\; 53A40 % Other special differential geometries
\; 65K10 % Optimization and variational techniques
\end{abstract}

\section{Introduction}
It was observed at least since~\cite{NitzbergMumfordShiota} that
line energies such as Euler's ``Elastica'' (here for a smooth
curve $\gamma$):
\[
\int_{\gamma} [1+]\; \kappa^2 ds
\]
could be natural regularizers for the completion of
missing contours in images. This idea was based in particular
on observations of G.~Kanizsa~\cite{kanizsa1980grammatica,Kanizsa-en}
about the way our perception can ``invent'' apparent contours.
The model in~\cite{NitzbergMumfordShiota}
(see also~\cite{EseShen2002,EseMarch2003}
for interesting attempts to solve it with phase-field methods)
was variational in nature
and since then, many attempts have been proposed to study
and address the
minimization of such line energies, both theoretically and numerically.

From the theoretical point of view, the lower-semicontinuity of
these energies already is a challenge, which has been studied
in many papers (in connexion to the applications to computer vision)
since at least the 90s~\cite{Bellettini-MS-1993,Bel-Mug2004,
BelleMarch2004,Bel-Mug2005,BelleMarch2007,Bel-Mug2007,DayrensMasnouNovaga17}.
It is shown already in~\cite{Bellettini-MS-1993} that
 the boundary of many sets with cusps
can be approximated by sets with smooth boundaries of bounded energy,
showing that the relaxation of the Elastica for boundaries of
sets is already far from trivial. The study of this lower semi-continuity
of course enters the long history of the study
of general curvature dependent energies
of manifolds and in particular the Willmore energy~\cite{Willmore}.

Quite early, it has been suggested to lift the manifold in a larger
space where a variable represents its direction or orientation, by means
in particular (for co-dimension one manifolds) of the Gauss map
$(x,\nu(x))$, $\nu(x)$ being the normal to the manifold at $x$~\cite{Anzellotti1990,AST1990}. Such approach allows to study very general curvature energies
and has been successfully used for establishing lower-semicontinuity
and existence
results~\cite{AST1990,AnzeDelladio1995,Delladio1997a,Delladio1997b},
and in particular to lines energies such as ours in higher codimension~\cite{AcerbiMucci2017a,AcerbiMucci2017b} or in dimension 2~\cite{Delladio1997b}.
We must mention also in this class %of approaches 
an older approach based
on ``curvature varifolds'' (which is very natural since varifolds
are defined on the cross product of spatial and directional variables)
which has allowed to show existence results since
the 80s~\cite{Hutchinson1986}, see also~\cite{Mantegazza}.
%% cite later on Ambrosio Masnou

Interestingly, it was understood much earlier~\cite{HubelWiesel1959}
that (cats') vision was functioning in a similar way
(\textit{eg.}, using a sort of ``Gauss map''),
thanks to neurons sensitive to particular directions which were found
to be stacked inside the visual cortex into ordered columns,
making us sensitive to changes of
orientations (and thus curvature intensity). These findings
(which might explain some of Kanizsa's experiments) inspired
some mathematical models quite early~\cite{KD1987}, however
they were formalized into a consistent geometric interpretation
later on~\cite{PetitotTondut,SartiCittiPetitot,SartiCitti,Cittietal,MoiseevSachkov,Duitsetal14}.
The idea of these authors is to lift 2D curves in
the ``Roto-translation'' space (which is the group of
rotations and translations, however what is more important %interesting
here is less its group structure than the associated
\textit{sub-Riemanian} metrics)
which can be identified, for our purposes, with $\OS$ where
$\Om\subset \R^2$ is the spatial domain where the image is
defined and $\Sp^1$ parameterizes the local orientation.

The natural metric in that space prevents from moving spatially
in a direction other than the local orientation, which
makes it singular (hence ``sub''-Riemanian). Diffusion
and mean curvature flow in this particular metrics were successfully
used as efficient methods for image ``inpainting'', which is
the task of filling in a gap in an
image~\cite{Franken2009,Boscainetal,Prandietal,SharmaDuits2015}.
Indeed, such diffusion
naturally extends missing level sets into smooth curves, and
even allows for crossing, since curves with different directions ``live''
in different locations in the Roto-translation space.
This seems to improve in difficult situations (\textit{eg.}, crossings)
upon more classical diffusion models for
inpainting~\cite{ChanShen2001,bertalmio2000image}
(or Weickert's ``EED''~\cite{Weickert1996,WeickertCat2012,Schmaltzetal2014}
which can propagate
directional information quite smoothly accross inpainted regions).

In computer science, similar ideas were successfully 
implemented in discrete graphs (with nodes representing a
spatial point and orientation)
in order to minimize curvature-dependent contour energies
\cite{schoenemann2007introducing,el2010fast,el2010optimization,SMC2011,Krueger2011,SKMC2012,SMC2012,ElZgrady2016}. Our current work is closer to these
approaches, although set up in the continuous setting, as we
want to represent and solve variational models involving curvature terms,
were the unknown are scalar (grey-level) functions.
A continuous approach similar to these discrete ones proposes
to compute minimal paths in the Roto-translational metric
by solving the corresponding eikonal equations~\cite{Mirebeau2014,mirebeau_FM_2017,DaMirebeauCohen2017}
(the goal being more here to find paths on an image than to complete
boundaries). This is strongly connected to geometric control
problems (such as parking a car), and these connections have led
to the study many interesting metrics in these settings,
with impressive applications
to imaging problems such as the extraction of networks of vessels and fibers in 2D and 3D medical images via fast, globally optimal sub-Riemannian and sub-Finslerian geodesic tracking in the roto-translation group~\cite{BekkersDuitsMS2015,Duits-et-al2016}.
% the extraction of networks of vessels in 3D medical images
\smallskip

As said, we wish to extend these variational methods to level
sets representations.
The family of problems we are interested in is introduced
in a paper by Masnou and Morel~\cite{masnou1998level}
(see also~\cite{MasnouMorel06}), which addresses the
problem of image inpainting. Their initial idea is to minimize,
in the inpainting domain $D$, an energy of the form
\[
\int_D \left(1+\left|\Div \frac{Du}{|Du|}\right|^p\right)|Du|
\]
for some $p\ge 1$, where $u$ is a bounded variation function and
appropriate boundary conditions are given. A theoretical
study of this energy, which in general is not lower-semicontinuous (lsc),
is found in~\cite{AmbrosioMasnou}, in particular it is shown
that if $p>1$ (in dimension $2$), it is lsc on $C^2$ functions.
Interestingly, also this study relies on a space/direction representation
(and more precisely on varifolds). An interesting co-area
formula for the relaxed envelope is also shown in~\cite{MasnouNardi2013}.

There have been many attempts to numerically solve Masnou and Morel's
model, which is very difficult to tackle, being highly non convex.
Most of the authors introduce auxiliary
variables~\cite{Ballesteretal2001,ChanKangShen2002,Ballesteretal2003},
for instance representing the orientation, which is already close
to the idea of the Gauss map or Roto-translational representation.
Recent techniques
based on Augmented Lagrangian methods (for coupling the auxiliary
variables)~\cite{TaiHahnChung,ZTC2013,Yashtini2015,YashKang16,GlowinskiPanTai2016,Glowinski2016,BTZ2017}
have shown to be quite efficient, despite the lack
of convexity and hence convergence guaranties.
% "error estimates" \cite{ChanKang2006}
\smallskip

What we propose here is to rather introduce a functional which  may be
sees as a convex relaxation of Masnou and Morel's. 
The method we propose is based on two ingredients: A lifting
in the Roto-translation space of curves which allows to write the
Elastica energy (or any convex function of the curvature) as
a convex function, as in classical Gauss map based approaches, 
and (formally) a decomposition of functions as sum of characteristic functions
of sets with finite energy. This allows to define a convex functional
which is defined on grey-level functions and penalises the curvature
of the level lines. It is however easy to check that it is
in general strictly below Masnou and Morel's functional, in
particular a function of finite energy needs not have its level
sets regular (in practice, they could be intersections of regular
sets, in addition to having possible cusps, for the same
reasons as in~\cite{Bellettini-MS-1993}), see for example
Figures~\ref{fig:infinity} and~\ref{fig:inpainting-einstein-pablo}(e).
The only theoretical
result which we can show is that $C^2$ curves are tightly represented
by our convex relaxation.

There is a close relationship between our approach and the
functionals in~\cite{BrPoWi13} (``TVX'') and~\cite{BrPoWi15}, based
on similar representations (but~\cite{BrPoWi13} omits to preserve the
boundary of the lifted current). In fact, the functional we 
build is, as we  show
further on, a new expression of the previous convex relaxation
of the Elastica energy (and variants) proposed in~\cite{BrPoWi15}.
This stems from the identity of the corresponding dual problems.
Although it boils down to the same
energy, it is introduced in a much simpler way, as the primal
expression in~\cite{BrPoWi15} requires to work in a space
where the point, the tangent and the curvature are lifted
as independent variables. 
As a consequence, we also can provide
a simpler discretization, and the tightness of the relaxation
for $C^2$ sets was unnoticed in~\cite{BrPoWi15}.
Eventually, we should mention that part of what we propose here could
be generalized to arbitrary dimension or co-dimension, following
the techniques in~\cite{AST1990}. However, computationally, our 
construction for 1D curves in the plane
already needs to work in a 3D space, and more complex models
seem at this moment intractable. Our future research will
rather focus on improving the discretization and the optimization
of the bidimensional case.
\smallskip

The paper is organised as follows: In Section~\ref{sec:func} we describe
the lifting and introduce our functional. We state our main
result, which shows that characteristic functions of $C^2$ sets are well
represented by our convex relaxation (Theorem~\ref{th:C2}).
In the next Section~\ref{sec:prop}
we show an approximation result with smooth functions, and
compute a dual representation of the functional (which
shows it is identical to~\cite{BrPoWi15}).
Then, in Section~\ref{sec:numexp}, we describe how the functional
is discretized and show a few experimental results.
The last Section~\ref{sec:proof} is devoted to the proof if Theorem~\ref{th:C2}.
Some technical tools are found in the Appendix, in particular,
Appendix~\ref{app:consist} shows, up to (hopefully minor) transformations,
the consistency of our implementation in Section~\ref{sec:numexp}.

\section{The functional}\label{sec:func}

We consider $\Om$ a bounded open domain in the plane, and
$E\subset\Om$ a set with $C^2$ (and to simplify, connected) boundary.
Assume that this boundary $\partial E$ is given by a parameterized
curve $\left(x_1(t), x_2(t)\right)$ with parameter $t \in [0,1]$. The
(extrinsic) curvature $\kappa_E$ of $\partial E$ is classically
defined as the ratio between the variation of the tangential angle
$\theta$ and the variation of its arc length $s$, that is
\[
\kappa_E = \frac{d\theta}{ds} =
\frac{\frac{d\theta}{dt}}{\frac{ds}{dt}}
\frac{\frac{d\theta}{dt}}{\sqrt{\left(\frac{dx_1}{dt}\right)^2 +
    \left(\frac{dx_2}{dt}\right)^2}}\;.
\]
The main idea of the lifting is now to consider a higher dimensional
representation of the parametric curve in the 3D roto-translation (RT)
space, which is obtained by adding the tangential angle $\theta$ as an
additional dimension to $\Om$. In this space, we now consider a
parameterized 3D curve $\left(x_1(t), x_2(t), \theta(t)\right)$ which
lifts the boundary $\partial E$ to the roto-translational
space. Figure~\ref{fig:rtspace} shows an example where we lift the
boundary of a disk (which is a circle) to the RT space. Observe that
the lifted boundary is represented by a 3D helix.

Now, we define for all $t \in [0,1]$ the tangential vector $p(t) =
(p^x(t),p^\theta(t))$ with
\[
p^x(t) = \left(\frac{dx_1(t)}{dt}, \frac{dx_2(t)}{dt}\right), \quad
p^\theta(t) = \frac{d\theta(t)}{dt}, \quad |p^x(t)| = \sqrt{\left(\frac{dx_1(t)}{dt}\right)^2 +
  \left(\frac{dx_2(t)}{dt}\right)^2}.
\]
The curvature is therefore given by
\[
\kappa_E(t) = \frac{p^\theta(t)}{|p^x(t)|},
\]

In this work we consider $f:\R\to [0,+\infty]$ a convex,
lsc function and want to define a convex lsc extension (to grey-level
valued functions) of energies of the type
\[
E\mapsto  \int_{\partial E} f(\kappa_E) d\H^1
\]
where $E\subset\Om$ is a set with $C^2$ boundary, and $\kappa_E$ is
the curvature of the set. Using our tangential vector $p(t)$, it is
easy to see that the energy can be (formally) written as
\[
\int_{\partial E} f(\kappa_E) d\H^1 = \int_0^1
f(p^\theta/|p^x|)|p^x|dt = 
\int_{\OS} f(\tau^\theta/|\tau^x|) |\tau^x| d\H^1\restr \Gamma_E
\]
where $\Gamma_E=(x,\theta)([0,1])$ is the lifted curve and
$\tau(x,\theta)$ its normalized tangential vector, given
by $\tau(x(t),\theta(t))=p(t)/|p(t)|$ for all $t\in[0,1]$.
A precise definition of the energy will be given below.

\begin{figure}[ht!]
  \centering
  \begin{tikzpicture}[scale = 1.2]
  \begin{axis}
    [ view={70}{50},
      enlargelimits = false,
      xlabel={$x_1$},
      ylabel={$x_2$},
      zlabel={$\theta$},
      xmin=-1.4,xmax=1.4,
      ymin=-1.4,ymax=1.4
    ]

    % draw filled circle
    \addplot3[black,line width=2.0pt, fill=lightgray,variable=t,domain=0:1,samples=30]
    ({cos(deg(2*pi*t))},{sin(deg(2*pi*t))},0);
    
    % draw circle in RT space
    \addplot3[gray,line width=2.0pt,variable=t,domain=0:1,samples=30, samples y=0]
    ({cos(deg(2*pi*t))},{sin(deg(2*pi*t))},{2*pi*t});

    % draw support lines
    \pgfplotsinvokeforeach{0,0.1,...,1} {
      \draw[gray] ({cos(deg(2*pi*#1))},{sin(deg(2*pi*#1))},{2*pi*#1})
      -- ({cos(deg(2*pi*#1))},{sin(deg(2*pi*#1))},0);

      \node at (axis cs:0,0,0) {$E$};
      \node at (axis cs:-0.9,-0.8,0) {$\partial E$};

      \draw[gray] (1,0,0) -- (1,0.5878,0);
      \node[gray] at (axis cs:1.2,0.3,0) {$dx_2$};
      
      \draw[gray] (1,0.5878,0) -- (0.8090,0.5878,0);
      \node[gray] at (axis cs:0.9,0.8,0) {$dx_1$};

      \draw[gray] (0.8090,0.5878,0) -- (0.8090,0.5878,0.6);
      \node[gray] at (axis cs:0.6,0.45,0.2) {$d\theta$};
      
    }
  \end{axis}
\end{tikzpicture}
  \caption{The gray line is the lifting of the boundary $\partial E$
    of the disk $E$ to the roto-translational space
    $\OS$.}\label{fig:rtspace}
\end{figure}

Let us briefly discuss three instances of energies that will typically
appear in applications. In all cases $\alpha > 0$ will be a tuning
parameter that can be used to balance the influence of the curvature
term with respect to the length.
\begin{enumerate}
  \item $f_1(t) = 1+\alpha |t|$. This energy penalizes the arclength plus
    the absolute curvature, hence we might expect that this type of
    energy will allow also for corners. This type of energy has been
    studied with a different approach in~\cite{BrPoWi13}.
    The energy is given by:
    \[
    \int_{\OS} f_1(\tau^\theta/|\tau^x|)|\tau^x|d\H^1\restr\Gamma_E = 
    \int_{\OS} |\tau^x| + \alpha |\tau^\theta| d\H^1\restr\Gamma_E.
    \]
  \item $f_2(t) = \sqrt{1+\alpha^2|t|^2}$. This energy penalizes the
    arclength of the lifted curve in the RT space which, in some
    sense, corresponds to the ``Total Roto-translational
    Variation''. It yields the energy:
    \[
    \int_{\OS} f_2(\tau^\theta/|\tau^x|)|\tau^x|d\H^1\restr\Gamma_E
    = \int_{\OS} \sqrt{|\tau^x|^2 + \alpha^2 |\tau^\theta|^2} d\H^1\restr\Gamma_E
    \]
    which is nothing but the length of the lifted curve $\Gamma_E$ in
    a Riemanian metric. This functional was considered for
    perceptional completion problems probably first in~\cite{SartiCitti}.
%% comments of Remco 
    It coincides with
    sub-Riemannian~\cite{SartiCitti,Duitsetal14,Boscainetal14,BekkersDuitsMS2015}
    and more precisely the sub-Finslerian models in~\cite{Duits-et-al2016} (with the constraint of positive direction on the velocity).

  \item $f_3(t) = 1+\alpha^2|t|^2$. This is the classical Eulers's
    Elastica energy studied for example
    in~\cite{NitzbergMumfordShiota,Bellettini-MS-1993,MasnouMorel06,schoenemann2007introducing,BrPoWi15}
    and many other works already mentioned in our introduction.
    \[
    \int_{\OS} f_3(\tau^\theta/|\tau^x|)|\tau^x| d\H^1\restr\Gamma_E
    = \int_{\OS} |\tau^x| + \alpha^2 \frac{|\tau^\theta|^2}{|\tau^x|} d\H^1\restr\Gamma_E.
    \]
    Observe that the second term in the energy is a ``quadratic over
    linear'' function which is still convex, provided $\tau^x$
    is constrained in a half space or a half line (which of course
    will depend on the $\theta$ variable).
\end{enumerate}
In all these examples, we find that the energy which we are interested
in ends up represented possibly as a \textit{convex} function of the
measure $\sigma = \tau\H^1\restr\Gamma_E$. Moreover, this $\sigma$ is
not an arbitrary measure. In particular, it satisfies two important
constraints: first, by construction, it is a circulation and has zero
divergence in $\OS$ (it can have source terms on $\partial\OS$):
indeed, given any smooth function $\psi$, $\int\nabla\psi\cdot\sigma=
\int_{\Gamma_E}\partial_\tau\psi d\H^1$ vanishes if $\Gamma_E$ is a
closed curve, or if it ends on the boundary and $\psi$ has compact
support.  Second, its marginals in $\OS$, which we can formally denote
$\int_{S^1}\sigma$ (and which are also divergence free), coincide with
a $90^\circ$-rotation of the measure $D\chi_E$. We will show now how
to generalize this construction to arbitrary sets or functions (with
bounded variation).

In the whole paper, we assume that there exists $\gamma>0$ such that
\begin{equation}\label{eq:controlbelow}
f(t)\ge\gamma\sqrt{1+t^2},\quad \forall\ t\in\R.
\end{equation}
We also introduce the \textit{recession function}
\[
f^\infty(t) = \lim_{s\to +\infty} \frac{1}{s}f(st) 
\]
which is a convex, one-homogeneous function possibly infinite
on $(-\infty,0)$ or/and $(0,+\infty)$. It is easy to check that
it is the support function of $\dom f^*$, the domain of the convex
conjugate $f^*$ of $f$: 
\begin{equation}\label{eq:recession}
f^\infty(t) =  \sup_{y\in\dom f^*}ty.
\end{equation}
Indeed, for $y\in\dom f^*$, $s>0$,
\[
ty = \frac{1}{s} (st)y \le \frac{1}{s}f(st)  + \frac{1}{s}f^*(y)
\stackrel{s\to\infty}{\longrightarrow} f^\infty(t)
\]
as $s\to\infty$; on the other hand, as $f^*\ge -\gamma$
thanks to~\eqref{eq:controlbelow}, for $s>0$ one has
\[
\frac{1}{s}f(st) = \sup_{y\in\dom f^*} ty - \frac{1}{s}f^*(y)
\le \sup_{y\in\dom f^*} ty + \frac{\gamma}{s} 
\]
showing that $f^\infty(t)\le \sup_{y\in\dom f^*}ty$.

We let then, for $p=(p^x,p^\theta)\in \R^3$ with $p^x\neq 0$,
\begin{equation}\label{eq:defh}
h(\theta,p) =
\begin{cases} |p^x|f(p^\theta/|p^x|) & \textup{ if } p^x\in \R_+\utheta, p^x\neq 0,\\
f^\infty(p^\theta) & \textup{ if }p^x=0,\\
+\infty & \textup{ else.}
\end{cases}
\end{equation}
Here, $\utheta$ denotes the unit planar vector $( \cos\theta,\sin\theta)^T$ 
(by a slight abuse of notation, we will  also denote
in this way the vector $(\cos\theta,\sin\theta,0)^T\in\R^3$).
It is then classical~\cite[\S~13]{Rockafellar}
 % dal maso, or giaquinta modica soucek?
and easily follows from~\eqref{eq:recession} that one has
\begin{equation}\label{eq:dualh}
h(\theta,p) = \sup \left\{ \xi\cdot p :  \xi^x\cdot\utheta \le -f^*(\xi^\theta)
\right\},
\end{equation}
that is, the one-homogeneous function $h$ is the support function
of the convex set in the right-hand side of~\eqref{eq:dualh}.
Indeed, first of all, the sup in~\eqref{eq:dualh}
is $\infty$ if $p^x$ is not $\lambda\utheta$,
$\lambda\ge 0$. While if it is of this form, then (taking the
suprema over the $\xi$'s which satisfy $\xi^x\cdot\utheta \le -f^*(\xi^\theta)$)
\[
\sup_{\xi\,:\,\xi^x\cdot\utheta \le -f^*(\xi^\theta)} \xi^x\cdot\utheta\lambda + \xi^\theta p^\theta =
\sup_{\xi^\theta} \xi^\theta p^\theta -\lambda f^*(\xi^\theta)
= \begin{cases}
\lambda f(p^\theta/\lambda) & \textup{ if } \lambda>0\\
f^\infty(p^\theta) & \textup{ else.}
\end{cases}
\]
Observe that~\eqref{eq:controlbelow} yields that for all $p\in\R^3$,
\begin{equation}\label{eq:controlbelowh}
h(\theta,p) \ge \gamma |p|
\end{equation}

We now introduce the functional
\begin{equation}\label{eq:deFu}
F(u) = \inf\left\{ \int_{\OS} h(\theta,\sigma)dx d\theta\,:\,
\Div \sigma=0, \int_{\Sp^1}\sigma^x d\theta = Du^\perp \right\}.
\end{equation}
Here $x^\perp=(x_2,-x_1)$ is a $90^\circ$ rotation in the plane.
The last condition is understood as follows: for all
$\vp\in C_c^1(\Om;\R^2)\subset C_c^1(\OS;\R^2)$,
one has
\begin{equation}\label{eq:constraintDu}
\int_{\OS} \vp^\perp\cdot\sigma^x  = \int_\Om \vp\cdot Du
= -\int_\Om u\Div \vp dx.
\end{equation}
In particular in general the fields $\sigma$ appearing in~\eqref{eq:deFu}
are free-divergence bounded Radon measures in $\OS$ with values in $\R^3$ 
(we denote $\M^1(\OS;\R^3)$ the space of such measures)
and the proper way to write the integral is rather
\[
\int_{\OS} h(\theta,\sigma) = 
\int_{\OS} h\left(\theta,\frac{\sigma}{|\sigma|}\right)d|\sigma|
\]
where $\sigma/|\sigma|$ is the Radon-Besicovitch derivative of $\sigma$
w.r.~its total variation~$|\sigma|$.
Notice that  for any $u$ and any admissible $\sigma$,
thanks to~\eqref{eq:controlbelowh},
\[
F(u) \ge 
\int_{\OS} h(\theta,\sigma)
 \ge \gamma\int_{\OS} |\sigma|
\ge % \gamma \int_{\Om} |Du^\perp| =
\gamma \int_{\Om} |Du| 
\]
because of~\eqref{eq:constraintDu}, taking the supremum
with respect to the test function $\vp$ with $|\vp(x)|\le 1$ everywhere.
This shows that $F(u)$ bounds the $BV$ seminorm.

We denote by $K(\theta)$ the closed convex set
of $\R^3$ whose support function is $h(\theta,\cdot)$, as already
seen it is given by
\[
K(\theta)= \left\{ \xi\in\R^3 :  \xi^x\cdot\utheta \le -f^*(\xi^\theta)
\right\}.
\]
Remark that \eqref{eq:controlbelow}
implies that $f^*(s)\le -\sqrt{\gamma^2-s^2}$
for $|s|\le\gamma$, implying in particular that $0$
is in the interior of $K(\theta)$.

We then let (following for instance~\cite{BouchitteValadier,Rockafellar71})
\[
K = \left\{\vp \in C_c^0(\OS;\R^3): \vp(x,\theta)\in K(\theta) \;\forall
(x,\theta)\in\OS\right\}.
\]
Then, we claim that for any measure $\sigma\in \M^1(\OS;\R^3)$
\begin{equation}\label{eq:intdualh}
\int_{\OS} h(\theta,\sigma)
= \sup_{\vp\in K} \int_{\OS} \vp\cdot\sigma,
\end{equation}
showing in particular that this integral is a lower-semicontinuous
function of $\sigma$ (for the weak-$*$ convergence).
To prove this claim (which is standard), first
observe that if $\vp\in K$, then for all $\psi^-\in C_c^0(\OS;\R_+)$
and $\psi\in C_c^0(\OS)$, also $\vp-\psi^-\utheta+\psi\utheta^\perp\in K$.
One deduces easily that the supremum in~\eqref{eq:intdualh} is infinite
if one of the measures $(\utheta\cdot\sigma^x)^-$ or $\utheta^\perp\cdot\sigma^x$
does not vanish. If both vanish, it means that $\sigma=(\lambda\theta,\sigma^\theta)$ for some nonnegative measure $\lambda$. Hence the supremum becomes
%\begin{multline*}
\[
\sup_{\vp\in K}\int_{\OS} (\vp^x\cdot\theta) \lambda + \vp^\theta \sigma^\theta
%\\
= \sup_{\psi+f^*(\vp^\theta)\le 0}\int_{\OS} \psi \lambda+
 \vp^\theta \sigma^\theta
= \sup_{\vp^\theta} \int_{\OS} \vp^\theta \sigma^\theta - f^*(\vp^\theta) \lambda,
\]
%\end{multline*}
and~\eqref{eq:intdualh} is then deduced in a standard way (with
a Besicovitch covering of $\OS$ with respect to the measure
$|\sigma|$ by balls where $\lambda,\sigma^\theta$ are ``almost constant'', and
choosing then for
$\vp$ the ``right function'' in each ball of the covering---the
fact that zero is in the interior of $K$, which allows to multiply
functions in $K$ by a cut-off, is important here).

We can deduce that also $F$ is lower semicontinuous:
consider a sequence $(u_n)$ with $F(u_n)\le c<\infty$.
Then $u_n$ is bounded in $BV$ and converges
(up to a constant and a subsequence) to some $u$ in $L^1(\Om)$.

If $\sigma_n$ reaches the value of $F(u_n)$ up to $1/n$, one has
that
\[
\int_{\OS}  |\sigma_n| \le c <\infty
\]
so that $\sigma_n\stackrel{*}{\rightharpoonup} \sigma$ (up to a subsequence), as measures,
and obviously $\Div \sigma=0$. Clearly, also~\eqref{eq:constraintDu}
passes to the limit. Hence, by lower semicontinuity,
\[
F(u)\le \int_{\OS} h(\theta,\sigma) 
\le\liminf_n \int_{\OS} h(\theta,\sigma_n)  = \liminf_n F(u_n).
\]
This shows that $F$ defines a convex, lower semicontinuous functional
on $BV(\Om)$. Our main theoretical
result is the following theorem, which shows
that if the argument $u$ is the characteristic function of a smooth enough set,
then $F(u)$ coincides, as expected,
with a curvature-dependent energy of the boundary of the set $\{u=1\}$.
The proof of this result is postponed to Section~\ref{sec:proof}.
\begin{theorem}\label{th:C2}
Let $E\subset\Omega$ be a set with $C^2$ boundary. Then
\begin{equation}\label{eq:goodrelaxE}
F(\chi_E) = \int_{\partial E\cap \Omega}  f(\kappa_E(x))d\H^1(x).
\end{equation}
\end{theorem}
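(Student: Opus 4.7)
The plan is to prove the two inequalities in~\eqref{eq:goodrelaxE} separately.

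For $F(\chi_E)\le\int_{\partial E\cap\Om}f(\kappa_E)\,d\H^1$ I would plug in the explicit admissible competitor already hinted at below~\eqref{eq:deFu}: parametrise $\partial E$ by arc length, let $\theta_E(s)$ be the tangent angle, and take $\sigma_E=\tau\,\H^1\restr\Gamma_E$ with $\Gamma_E=\{(x(s),\theta_E(s))\}\subset\OS$ and $\tau$ its unit tangent. For any $\psi\in C_c^1(\OS)$, $\int\nabla\psi\cdot\sigma_E$ telescopes arc by arc along $\Gamma_E$ by the fundamental theorem of calculus, giving $\Div\sigma_E=0$; the marginal identity~\eqref{eq:constraintDu} follows by a direct computation matching $\dot x(s)$ with the spatial tangent of $\partial E$. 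The $1$-homogeneity of $h(\theta,\cdot)$ together with~\eqref{eq:defh} then evaluates $\int_{\OS}h(\theta,\sigma_E)=\int_0^L f(\kappa_E(s))\,ds$.

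The reverse inequality is the substantial one, which I would attack by calibration through the dual representation~\eqref{eq:intdualh}. Observe that if one can produce a continuous field $\xi:\overline{\OS}\to\R^3$ of the form $\xi(x,\theta)=\nabla_{(x,\theta)}\rho(x,\theta)+(\phi(x)^\perp,0)$ with $\rho\in C^1(\OS)$, $\phi\in C_c^1(\Om;\R^2)$ and $\xi(x,\theta)\in K(\theta)$ for every $(x,\theta)$, then for any admissible $\sigma$ the divergence-free condition yields $\int\nabla\rho\cdot\sigma=0$ and~\eqref{eq:constraintDu} yields $\int(\phi^\perp,0)\cdot\sigma=\int\phi\cdot D\chi_E$, so that $\int h(\theta,\sigma)\ge\int\xi\cdot\sigma=\int\phi\cdot D\chi_E$. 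It therefore suffices to construct a pair $(\rho,\phi)$ realising $\int\phi\cdot D\chi_E=\int_{\partial E\cap\Om}f(\kappa_E)\,d\H^1$, possibly only as a limit along a sequence.

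To build the calibration I would exploit the $C^2$-structure of $\partial E$. In a tubular neighbourhood, let $\pi$ be the nearest-point projection and $d$ the signed distance; extend $\theta_E$ and $\kappa_E$ constantly along normals, and pick a smooth selection $q(x)\in\partial f(\kappa_E(x))$, so that $q\kappa_E-f^*(q)=f(\kappa_E)$. Set $\phi(x)=-f(\kappa_E(\pi(x)))\,\nu_E(\pi(x))$ cut off to $C_c^1(\Om;\R^2)$; by the divergence theorem applied to $E$, $\int\phi\cdot D\chi_E=\int_{\partial E\cap\Om}f(\kappa_E)\,d\H^1$. The natural ansatz is $\rho(x,\theta)=q(x)\,G(\theta-\theta_E(x))\,\chi(d(x))$ with $G$ smooth and $2\pi$-periodic satisfying $G(0)=0$, $G'(0)=1$, and $\chi$ a spatial cutoff. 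Using $\nabla(\theta_E\circ\pi)=\kappa_E\tau_E$ on $\partial E$ and the definition of $q$, one checks that at $x\in\partial E$, $\theta=\theta_E(x)$ the constraint defining $K(\theta)$ is exactly saturated: $\xi^\theta=q$ and $\xi^x\cdot\utheta=f(\kappa_E)-q\kappa_E=-f^*(q)$.

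The main obstacle is that the inclusion $\xi(x,\theta)\in K(\theta)$ must hold \emph{pointwise everywhere}, not only on the lifted surface $\{\theta=\theta_E(x)\}$. Away from it the derivative $\xi^\theta=qG'(\theta-\theta_E)\chi(d)$ may leave $\dom f^*$ (which by~\eqref{eq:controlbelow} is contained in $[-\gamma,\gamma]$, and is even one-sided for the example $f_1$), and the spatial gradient of $\rho$ picks up $\kappa_E\tau_E$-aligned contributions of order $|\kappa_E|$ that must still be dominated by $-f^*(\xi^\theta)$. I expect the remedy to be a two-scale refinement of the ansatz in which $G$ is replaced by a $\delta$-concentrated periodic profile supported in $|\theta-\theta_E|<\delta$, producing a sequence of calibrations whose dual values converge to $\int_{\partial E\cap\Om}f(\kappa_E)\,d\H^1$ as $\delta\to 0$; this suffices, since $F(\chi_E)$ is a supremum over admissible calibrations. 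The $C^2$-bounds on $\theta_E$ and $\kappa_E$ together with the coercivity~\eqref{eq:controlbelow} are precisely the ingredients needed to close the off-surface residual estimates.
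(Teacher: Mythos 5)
Your upper bound is the same as the paper's and is fine. The lower bound, however, is where the whole difficulty of the theorem lives, and your proposal does not actually prove it: you correctly reduce it to exhibiting a calibration $\xi=\nabla\rho+(\phi^\perp,0)$ with $\xi(x,\theta)\in K(\theta)$ \emph{everywhere} in $\OS$, you verify the constraint only on the lifted surface $\{\theta=\theta_E(x)\}$, you explicitly identify the off-surface verification as "the main obstacle", and you then leave it to an unconstructed "two-scale refinement" that you "expect" to work. That expectation is the entire content of the hard inequality. Concretely, the naive ansatz already fails in at least two places: (i) in the region where the spatial cutoff kills $\rho$ but $\phi$ is still of size $f(\kappa_E)$, the constraint at $\theta\approx\theta_E(x)$ reads $f(\kappa_E)+f^*(0)\le 0$, i.e.\ $f(\kappa_E)\le\inf f$, which is false wherever $\kappa_E$ does not minimize $f$; (ii) your leading-order computation near the surface produces a residual of the form $f^*(q)\bigl(1-\cos(\theta-\theta_E)\bigr)$ with $q\in\partial f(\kappa_E)$, and $f^*(q)$ need not be negative for superlinear $f$ (take $f(t)=\gamma\sqrt{1+t^2}+t^2$ at large $\kappa_E$), so even the sign of the residual is not controlled. (Also, \eqref{eq:controlbelow} gives $[-\gamma,\gamma]\subseteq\dom f^*$, not the containment you state.) One cannot appeal to abstract duality to guarantee that near-optimal calibrations with value $\int_{\partial E\cap\Om}f(\kappa_E)\,d\H^1$ exist, because that is equivalent to the statement being proved.

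For comparison, the paper avoids calibrations entirely: it takes an arbitrary admissible $\sigma$, decomposes it via Smirnov's theorem as a superposition $\sigma=\int_{\Cone}\lambda_\gamma\,d\mu$ of curves in $\OS$ (Appendix~\ref{app:Smirnov}, Lemma~\ref{lem:totdecg} and its corollaries to pass the convex integrand through the decomposition), proves a per-curve estimate (Lemma~\ref{lem:important}) showing that the projection of each lifted curve carries at least the energy $\int\vp f(\kappa_\Gamma)$ on the portion where it is tangent to a given $C^2$ curve $\Gamma$, and finally uses a flux computation with a test field built from the signed distance to $\partial E$ to show that the curves in the decomposition must collectively cover $\partial E$ with the right multiplicity. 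If you want to salvage your route you would need to actually produce the global calibration (or a sequence of them), with quantitative off-surface estimates; as written, the proof has a gap exactly at its crux.
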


\begin{remark} \textup{One could hope that $F$ coincides with the lower semicontinuous
envelope of its restriction to $C^2$ sets, with respect to the $L^1$ convergence.
However, simple examples show that it is not the case.
Many examples where it fails are found in~\cite{BrPoWi15}.
In general, we expect the relaxation $F$ to be strictly below the
relaxations found in the literature since~\cite{Bellettini-MS-1993},
based on the ($L^1$) approximation of sets with smooths sets.
In particular, in the case of~\cite[Fig.~4.2]{Bellettini-MS-1993},
our relaxation will certainly be below twice the energy reached
by the pattern in~\cite[Fig.~1.2]{Bellettini-MS-1993}, while the
energy reached in~\cite[Fig.~4.2]{Bellettini-MS-1993} is strictly larger.
%\\\textit{Here: Discussion on the fact that the relaxation $F$ is in general
%lower than the lsc envelope of the rhs, cite Bellettini-Mugnai's papers.}
}\end{remark}

\begin{remark}\textup{If $f$ has growth one and is even,
one expects that the same
result holds for piecewise $C^2$ sets.}
\end{remark}

\begin{remark}\label{rem:uC2} \textup{We believe
that the proof below could be extended to show that
if $u\in BV(\Omega)$ is a function with $C^2$ level sets,
and such that there exists a continuous function $\kappa(x)$ which coincides
with the curvature of $\partial\{ u>s\}$ for all level $s\in\R$,
then one should have
\begin{equation}\label{eq:goodrelaxu}
F(u) =\int_{\R} \int_{\partial\{u>s\}} f(\kappa_{\partial\{u>s\}}(x))d\H^1(x)
= \int_\Om f(\kappa(x))d|Du|(x).
\end{equation}
Hence in that case our functional would coincide with Masnou and Morel's~\cite{MasnouMorel06}.
A typical example of a function $u$ for which such a function $\kappa(x)$
exists is (in a convex domain $\Om$, see~\cite{Mercier-regularity})
a solution of the so-called ``Rudin-Osher-Fatemi'' functional
\begin{equation}\label{eq:ROF}
\min_u \int_\Om |Du| + \frac{\lambda}{2}\int_\Om (u(x)-f(x))^2dx
\end{equation}
for some $\lambda>0$, in case $f$ is continuous. Then, also $u$
is continuous and $\kappa(x)$ is given by $\lambda(u-f)$. In that
case, in addition, $\kappa(x)$ is also the curvature of the level sets
of $u'=h(u)$ for any nondecreasing function $h$ such that $h(u)$ is still
in $BV(\Om)$.}
\end{remark}
% \begin{remark}\label{rem:uC2} \textup{The proof below can be extended to show that
% if $u\in BV(\Omega)$ is a function with $C^2$ level sets such
% that there exists a continuous function $\kappa(x)$ which coincides
% with the curvature of $\partial\{ u>s\}$ for all $s\in\R$,
% then
% \begin{equation}\label{eq:goodrelaxu}
% F(u) =\int_{\R} \int_{\partial\{u>s\}} f(\kappa_{\partial\{u>s\}}(x))d\H^1(x)
% = \int_\Om f(\kappa(x))d|Du|(x).
% \end{equation}
% It is probably still the case if a.e.~levels of $u$ are $C^2$,
% but this is unclear.
% A typical example of a function $u$ for which such a function $\kappa(x)$
% exists is (in a convex domain $\Om$, see~\cite{Mercier-regularity})
% a solution of the so-called ``Rudin-Osher-Fatemi'' functional
% \begin{equation}\label{eq:ROF}
% \min_u \int_\Om |Du| + \frac{\lambda}{2}\int_\Om (u(x)-f(x))^2dx
% \end{equation}
% for some $\lambda>0$, in case $f$ is continuous. Then, also $u$
% is continuous and $\kappa(x)$ is given by $\lambda(u-f)$. In that
% case, $\kappa(x)$ is also the curvature of the level sets
% of $u'=h(u)$ for any nondecreasing function $h$ such that $h(u)$ is still
% in $BV(\Om)$, so that~\eqref{eq:goodrelaxu} also holds for such a
% $u'$. See Section~\ref{sec:uC2}. }
% \end{remark}

\section{Some properties of the functional}\label{sec:prop}

\subsection{Approximation by smooth functions}
\begin{proposition}\label{thm:smoothapprox}
Assume $\Om$ is a bounded convex set.
Then for any $u\in L^1(\Om)$
with $F(u)<\infty$, there exists $(u_n)_n$
a sequence of functions with $u_n\in C^\infty(\overline\Om)$,
 which converge to $u$ in $L^1(\Om)$, and such that
\[
\lim_{n\to\infty} F(u_n)=F(u).
\]
\end{proposition}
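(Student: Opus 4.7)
The functional $F$ has been shown to be lower semicontinuous on $L^1(\Om)$, so the inequality $F(u)\le\liminf_n F(u_n)$ is automatic and it suffices to exhibit, for every $\e>0$, a sequence $u_n\in C^\infty(\overline\Om)$ with $u_n\to u$ in $L^1(\Om)$ and $\limsup_n F(u_n)\le F(u)+\e$; a diagonal extraction then produces the desired sequence. Fix such an $\e$ and a nearly optimal competitor $\sigma\in\M^1(\OS;\R^3)$ with $\Div\sigma=0$, $\int_{\Sp^1}\sigma^x d\theta=Du^\perp$, and $\int h(\theta,\sigma)\le F(u)+\e$.

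The construction proceeds in two steps: an inward dilation, which uses the convexity of $\Om$ to push $(u,\sigma)$ strictly inside $\Om$, followed by a spatial mollification. For the dilation, fix $x_0\in \Om$ and, for $t\in(0,1)$ small, set $\Phi^t(x)=x_0+(1-t)(x-x_0)$. By convexity of $\Om$, the image $\Om':=(\Phi^t)^{-1}(\Om)=x_0+(\Om-x_0)/(1-t)$ is an open set strictly containing $\overline\Om$. Define $u^t:=u\circ\Phi^t\in L^1(\Om')$ and, understood measure-theoretically,
\[
\sigma^{t,x}(x,\theta)=(1-t)\,\sigma^x(\Phi^t(x),\theta),\qquad \sigma^{t,\theta}(x,\theta)=(1-t)^2\,\sigma^\theta(\Phi^t(x),\theta).
\]
The powers $1$ and $2$ are the unique choice making $\sigma^t$ simultaneously divergence-free and consistent with $\int_{\Sp^1}(\sigma^t)^x d\theta=(Du^t)^\perp$, as a direct computation shows. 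A change of variable $y=\Phi^t(x)$ together with continuity of $f$ on the interior of its effective domain (and the convexity estimate $f((1-t)\kappa)\le (1-t)f(\kappa)+tf(0)$ to provide domination when $f(0)<\infty$) then yields, via dominated convergence,
\[
\int h(\theta,\sigma^t)\;\xrightarrow[t\to 0^+]{}\;\int h(\theta,\sigma)\le F(u)+\e.
\]

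For the mollification step, fix $\delta<\dist(\overline\Om,\partial\Om')$ and a symmetric mollifier $\rho_\delta\in C^\infty_c(\R^2)$. Set $u^t_\delta:=u^t*_x\rho_\delta\in C^\infty(\overline\Om)$ and $\sigma^t_\delta:=\sigma^t*_x\rho_\delta$. Convolution in the $x$ variable commutes with both $\Div$ and with integration over $\Sp^1$, so $\sigma^t_\delta$ is admissible for $F(u^t_\delta)$. The energy bound comes from the dual formula~\eqref{eq:intdualh}: for any $\vp\in K$ supported in a $\delta$-neighbourhood of $\overline\Om\times\Sp^1$, the convolution $(\vp*_x\rho_\delta)(x,\theta)$ is at each $(x,\theta)$ a convex combination of the values $\vp(y,\theta)\in K(\theta)$, hence lies in $K(\theta)$ by convexity of $K(\theta)$; using that $\rho_\delta$ is symmetric,
\[
\int\vp\cdot d\sigma^t_\delta=\int(\vp*_x\rho_\delta)\cdot d\sigma^t\le \int h(\theta,\sigma^t).
\]
Taking the supremum over $\vp\in K$ yields $F(u^t_\delta)\le\int h(\theta,\sigma^t_\delta)\le\int h(\theta,\sigma^t)$. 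Sending $\delta\to 0^+$ and then $t\to 0^+$, one has $u^t_\delta\to u$ in $L^1(\Om)$ and $\limsup F(u^t_\delta)\le F(u)+\e$. A diagonal extraction, letting $\e\to 0$, completes the argument.

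The delicate point is the inward dilation. The two components $\sigma^x,\sigma^\theta$ must be scaled with different powers of $(1-t)$ to preserve divergence and marginal simultaneously, and one must then check that the integrand $h(\theta,\cdot)$, whose effective domain couples the direction of $p^x$ to the angular variable $\theta$ through $\utheta$, behaves continuously under this anisotropic rescaling (which ultimately reduces to continuity of $f$ on the interior of its domain plus a convexity-based domination). Once this setup is in place, the mollification and duality steps are the standard convexity argument for convex integrands of vector measures.
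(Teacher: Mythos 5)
Your argument is correct and follows the same overall strategy as the paper's proof: use the convexity of $\Om$ to dilate $(u,\sigma)$ inward so that everything lives well inside a slightly enlarged copy of $\Om$, mollify in the $x$-variable only, control the energy of the mollified field through the dual formula~\eqref{eq:intdualh} together with the convexity of $K(\theta)$ (convolution in $x$ alone keeps test functions in $K$), and conclude by the lower semicontinuity of $F$. The genuine difference is in how the dilation is implemented. You push $\sigma$ forward as a $1$-current, rescaling $\sigma^x$ by $(1-t)$ and $\sigma^\theta$ by $(1-t)^2$: this preserves $\Div\sigma^t=0$ and the marginal constraint exactly and leaves $u^t=u\circ\Phi^t$ unchanged in amplitude, but it multiplies the curvature argument of $f$ by $(1-t)$, so you must absorb an error of order $\tfrac{t}{1-t}f(0)\int|\sigma^x|$ via the sublinearity estimate $f((1-t)\kappa)\le(1-t)f(\kappa)+tf(0)$. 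The paper instead defines $\sigma_\e$ by duality \emph{without} rescaling the components, which makes the transported test functions $\vp*\rho_\e(t_\e\,\cdot,\theta)$ land in $K(\theta)$ on the nose and yields the exact inequality $F(u_\e)\le F(u)$ for every $\e$, at the price of an amplitude factor in the approximant, $u_\e=\tfrac{1}{t_\e}\,\rho_{\e/t_\e}*u(\cdot/t_\e)$ rather than a pure rescaling of $u$; your more explicit bookkeeping of how the anisotropic dilation interacts with the divergence constraint is in fact welcome, since in the paper this point is only asserted. The one caveat in your version is that the domination step requires $f(0)<+\infty$ (equivalently $0\in\dom f$); this holds for every example treated in the paper but is not implied by the standing assumption~\eqref{eq:controlbelow}, so in full generality the dilation step of your proof would need an extra approximation of $f$ from below by finite-valued functions, in the spirit of the final step of the proof of Theorem~\ref{th:C2}.
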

\begin{remark}
\textup{We believe that, upon replacing $C^\infty(\overline\Om)$
with $C^\infty(\Om)$, the result should be true in any domain.
%This is the subject for future study.
}
\end{remark}
\begin{proof}
We assume, without loss of generality, that $0$ is in the
interior of $\Om$. It follows that for any $t<1$, $t\Om\subset\subset\Om$.

% We admit the following fact: for any $\e>0$, there exists
% $\Om^\e\supseteq \{x:\dist(x,\Om)<\e\}$ and a smooth diffeomorphism
% $X_\e:\Om\to\Om^\e$, with $|X_\e(x)-x|\le 5\e$ such that $X_\e(x)\to x$
% in $C^\infty(\Om;\R^2)$ as $\e\to 0$. This is built by locally translating
% $\Om$ outwards near the boundary and stitching the translations.
% We denote $\Xi_\e=(X_\e,\theta)$, which is a diffeomorphism
% from $\Om^\e\times\Sp^1$ to $\OS$. Observe that
% \[
% \nabla\Xi_\e(x,\theta)=\begin{pmatrix}
% \nabla X_\e(x) & \begin{matrix} 0 \\ 0 \end{matrix} \\
% \begin{matrix} 0 & 0 \end{matrix} & 0  \end{pmatrix}.
% \]

% Given then $u$ and an admissible $\sigma$ such that
% $F(u)=\int_{\OS}h(\theta,\sigma)$, we can define
% $\sigma_\e$ a measure on $\Om^\e\times \Sp^1$ by letting, for
% any $\psi\in C_c^0(\Om^\e\times\Sp^1)$,
% \[
% \int_{\Om^\e\times \Sp^1} \psi(x,\theta)d\sigma_\e(x,\theta):=
% \int_{\OS} \nabla \Xi_\e(x)\psi(\Xi_\e(x,\theta)) d\sigma(x,\theta).
% \]
% Then by construction, if $\phi\in C_c^1(\Om^\e\times\Sp^1)$,
% \[
% \int_{\Om^\e\times \Sp^1} \nabla\phi \cdot\sigma_\e=
% \int_{\OS} \nabla \Xi_\e(x)\nabla\phi(\Xi_\e(x,\theta)) d\sigma(x,\theta)
% = \int_{\OS} \nabla (\phi\circ \Xi_\e)\cdot\sigma = 0
% \]
% since $\Div\sigma=0$. Hence $\sigma_\e$ has also free divergence.
%%% HOWEVER THIS IS NOT ADMISSIBLE

Consider $\rho$ a rotationally symmetric mollifier in $\R^2$
($\rho\in C_c^\infty(B_1^2;\R_+)$, $\int\rho=1$, and we let $\rho_\e=(1/\e^2)\rho(x/\e)$,
where here $B_1^d$ is the unit $d$-dimensional ball).
For $\e>0$, we consider the largest $t_\e<1$ such that  
$\{x\in\Om:\dist(x,\partial\Om)>\e\}\supseteq t_\e\Om$.
We then define $\sigma_\e$ as the measure on $\OS$ given by
\[
\int_{\OS} \psi\cdot\sigma_\e  = 
\int_{\OS} \psi*\rho_\e(t_\e x,\theta)d\sigma(x),
\]
where the convolution is only in the $x$-variable:
\[
\psi*\rho_\e(x,\theta) = \int_{B_1^2} \psi(x-\e z,\theta)\rho(z)dz
\]
and is well defined for $x\in t_\e\Om$. It is clear
that $\sigma_\e$ still has free divergence, moreover,
if $\vp\in C_c(\Om;\R^2)$,
\begin{multline*}
\int_{\OS} \vp^\perp\cdot\sigma_\e^x  = 
\int_{\OS} \rho_\e*\vp^\perp(t_\e x)\cdot\sigma_\e^x(x)  
= 
\int_\Om \rho_\e*\vp(t_\e x)\cdot Du
\\= -\int_\Om \int_{B_1^2} t_\e u(x)\rho(z)\Div \vp(t_\e x-\e z) dz dx
\\= -\frac{1}{t_\e}\int_\Om \int_{B_1^2} u((y+\e z)/t_\e)\rho(z)\Div \vp( y) dz dy
 = -\frac{1}{t_\e}\int_\Om (\rho_{\e/t_\e}*u)(y/t_\e)\Div \vp( y) dy.
\end{multline*}
Hence, $\sigma_\e$ is admissible for the function
$u_\e: y\mapsto\rho_{\e/t_\e}*u(y/t_\e)/t_\e$, which clearly goes to $u$
in $L^1(\Om)$ (or $BV(\Om)$ weakly-$*$) as $\e\to 0$. Moreover,
$u_\e\in C^\infty(\Om)$ for all $\e>0$.
Eventually, one has that for any $\vp\in K$,
\begin{equation*}
\int_{\OS} \vp\cdot\sigma_\e= 
\int_{\OS}  \vp*\rho_\e(t_\e x,\theta)\cdot\sigma
\le \int_{\OS} h(\theta,\sigma) = F(u)
\end{equation*}
by observing that since the convolution is only in  the 
$x$ variable, $\vp*\rho_\e(t_\e x,\theta)\in K(\theta)$ for all $(x,\theta)$
when $\vp\in K$.
Thanks to~\eqref{eq:intdualh}, it follows that
\[ 
\int_{\OS} h(\theta,\sigma_\e)\le  F(u).
\]
Hence $F(u_\e)\le F(u)$, and the proposition is proved.
\end{proof}

\subsection{Dual representation}\label{sec:dual}

We can show the following representation for $F$, which in particular
establishes that $F$ coincides exactly with the (more complicated)
 relaxation $R^{**}_\textup{el}$ defined in~\cite{BrPoWi15}.
\begin{proposition} \label{lem:dualform}
Assume $\partial\Om$ is connected.
Then the functional~\eqref{eq:deFu} can also be represented
in the following form
\begin{multline}\label{eq:dualform}
F(u) = \sup\Bigg\{ \int_\Om \psi\cdot Du^\perp\,:\,
\psi\in C^0_c(\Om;\R^2), 
\\
\exists\, \vp\in C_c^1(\OS),
 \utheta\cdot\left(\nabla_x\vp+\psi\right) + f^*(\partial_\theta\vp)\le 0\Bigg\}.
\end{multline}
If $\partial\Om$ is not connected, then same formula
holds, however the functions $\vp$ 
should be in $C^1(\OS)$, with $\nabla\vp\in C_c^0(\OS)$.
\end{proposition}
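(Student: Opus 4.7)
The plan is to apply Fenchel--Rockafellar duality to the constrained minimization defining $F(u)$ in~\eqref{eq:deFu}. I would view $F(u)$ as the infimum of $H(\sigma):=\int_{\OS} h(\theta,\sigma)$ over $\sigma\in\M^1(\OS;\R^3)$ subject to the two linear constraints $\Div\sigma=0$ in $\OS$ and $\int_{\Sp^1}\sigma^x\,d\theta = Du^\perp$ in $\Om$, and introduce Lagrange multipliers $\vp\in C_c^1(\OS)$ and $\psi\in C_c^0(\Om;\R^2)$, one per constraint. After integration by parts the Lagrangian reads
\[
L(\sigma;\vp,\psi)\;=\;\int_{\OS} h(\theta,\sigma)\;-\;\int_{\OS}\bigl(\nabla\vp-(\psi,0)\bigr)\cdot\sigma\;-\;\int_\Om \psi\cdot Du^\perp,
\]
and weak duality immediately gives $F(u)=\inf_\sigma\sup_{\vp,\psi}L \ge \sup_{\vp,\psi}\inf_\sigma L$.

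The dual objective drops out of the inner minimization. Writing $\xi=\nabla\vp-(\psi,0)\in C_c^0(\OS;\R^3)$, one has $\inf_\sigma[H(\sigma)-\int\xi\cdot\sigma]=-H^*(\xi)$; since each $h(\theta,\cdot)$ is the support function of $K(\theta)$ by~\eqref{eq:dualh}, the same integral-conjugacy argument already used to prove~\eqref{eq:intdualh} shows $H^*(\xi)=0$ when $\xi(x,\theta)\in K(\theta)$ pointwise and $+\infty$ otherwise. The pointwise condition $\xi\in K(\theta)$ unpacks to $(\nabla_x\vp-\psi)\cdot\utheta+f^*(\partial_\theta\vp)\le 0$, which under the harmless substitution $\psi\to-\psi$ becomes exactly the constraint of~\eqref{eq:dualform}, with objective $\int_\Om\psi\cdot Du^\perp$. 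For the reverse inequality I would invoke a Fenchel--Rockafellar strong-duality theorem: the relevant qualification is $0\in\textup{int}\,K(\theta)$ for every $\theta$ (noted after the definition of $K(\theta)$ as a consequence of~\eqref{eq:controlbelow}), which in the measure/continuous-function pairing suffices to exclude a duality gap (see the framework of~\cite{BouchitteValadier,Rockafellar71}). A robust alternative is to first establish the equality for smooth $u$ by direct construction of near-optimal $\sigma$ (using the mollification of Proposition~\ref{thm:smoothapprox}), and then pass to general $u$ using the lower-semicontinuity of $F$ established after~\eqref{eq:intdualh} together with that of the dual sup (itself automatic as a supremum of linear functionals of $Du^\perp$).

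Finally, the connectedness distinction for $\partial\Om$: the freedom to test the divergence constraint by $\vp$ which are merely constant (with possibly different constants) on each component of $\partial\Om$ exactly parametrises the flux modes of $\sigma$ that can run between distinct boundary components. When $\partial\Om$ is connected, only a single global flux needs to vanish and this mode is already absorbed into the compactly supported choice $\vp\in C_c^1(\OS)$; when $\partial\Om$ is disconnected, matching the additional independent fluxes at the dual level forces the enlargement to $\vp\in C^1(\OS)$ with $\nabla\vp\in C_c^0(\OS)$. \textbf{The main obstacle} I anticipate is making the strong-duality step fully rigorous in the measure-valued setting -- in particular because $\textup{dom}\,H$ is a cone in the measure space so that classical interior-point qualifications cannot be applied directly to $H$ -- together with the careful treatment of boundary fluxes in the disconnected case.
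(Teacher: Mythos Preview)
Your Lagrangian/Fenchel--Rockafellar approach is correct and reaches the right dual formula, but the paper takes a different route that neatly sidesteps precisely the obstacle you flag. Instead of introducing multipliers $(\vp,\psi)$ and appealing to a constraint qualification, the paper uses the perturbation method of~\cite{EkelandTemam}: it defines, for $p\in\M^1(\OS;\R^3)$,
\[
G(p)=\inf\Big\{\int_{\OS}h(\theta,\sigma+p):\ \Div\sigma=0,\ \textstyle\int_{\Sp^1}\sigma^x=Du^\perp\Big\},
\]
so that $F(u)=G(0)$, and shows $G$ is weak-$*$ lower semicontinuous (using~\eqref{eq:controlbelowh} to extract a convergent subsequence of near-minimizers, exactly as in the lsc proof for $F$). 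Hence $G=G^{**}$ and $F(u)=\sup_\eta -G^*(\eta)$: strong duality holds with no interior-point hypothesis. This is what buys you past your ``main obstacle''; the condition $0\in\textup{int}\,K(\theta)$ is used only for~\eqref{eq:intdualh}, not for closing the gap.

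A second difference is structural. You build the decomposition $\eta=\nabla\vp+(\psi,0)$ into the problem from the outset by your choice of multipliers. In the paper, the dual variable is a single $\eta\in C_0^0(\OS;\R^3)$; after showing $\eta\in K$ is required, one lets $\psi(x)=\tfrac{1}{2\pi}\int_{\Sp^1}\eta^x\,d\theta$ and reduces $-G^*(\eta)$ to $\int_\Om\psi\cdot Du^\perp$ minus $\sup_{\sigma'}\int(\eta-\psi)\cdot\sigma'$ over \emph{all} divergence-free $\sigma'$ (the constraint on the marginal drops out after a shift). This last sup is finite iff $\eta-\psi$ is a gradient $\nabla\vp$, and it is here that the connectedness of $\partial\Om$ enters: it decides whether $\vp$ can be taken in $C_c^1(\OS)$ or must be allowed in $C^1(\OS)$ with $\nabla\vp\in C_c^0$. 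Your explanation of this point via boundary fluxes is correct, though in the paper it arises more concretely from the characterization ``$\int\zeta\cdot\sigma'=0$ for all divergence-free $\sigma'$ $\Leftrightarrow$ $\zeta=\nabla\vp$''.
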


\begin{corollary}
The functional $F(u)$ coincides with the relaxation of the Elastica
energy $R^{**}_{\textup{el}}(u)$ defined in~\cite{BrPoWi15}.
\end{corollary}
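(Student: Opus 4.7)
The plan is to derive the corollary as an immediate consequence of Proposition~\ref{lem:dualform}. Both $F$ and $R^{**}_{\textup{el}}$ are, by construction, convex and lower semicontinuous on $L^1(\Om)$ ($R^{**}_{\textup{el}}$ since it is defined as a biconjugate, $F$ by the lower semicontinuity argument established right before the statement of Theorem~\ref{th:C2}). A proper convex lsc functional on $L^1$ is determined by its Legendre--Fenchel transform, so it suffices to show that the two functionals admit the same dual representation as a supremum over smooth test functions paired with $Du$.

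Concretely, I would first write out explicitly the dual form of $R^{**}_{\textup{el}}(u)$ from~\cite{BrPoWi15}. The primal expression there is an infimum over currents lifted into a space where the point, the tangent $\utheta$ and the curvature are \emph{independent} variables, subject to a closure condition on the boundary and a marginal constraint matching $Du^\perp$. Performing the Fenchel--Rockafellar duality, with the usual multiplier $\psi$ for the marginal constraint and the multiplier $\vp$ for the closure condition, and then partially conjugating in the curvature variable (which only appears linearly against $\partial_\theta\vp$ and produces, by definition of the conjugate, the term $f^*(\partial_\theta\vp)$), one obtains a dual supremum whose feasible set is governed by a pointwise constraint of exactly the shape $\utheta\cdot(\nabla_x\vp+\psi)+f^*(\partial_\theta\vp)\le 0$.

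The second step is to verify that this constraint, together with the classes of admissible $\vp$ and $\psi$, coincide with those in~\eqref{eq:dualform}. The linear pairing against $u$ is $\int_\Om\psi\cdot Du^\perp$ in both cases, and the function spaces agree: $\psi\in C^0_c(\Om;\R^2)$ enforces the marginal constraint on $\sigma^x$ across $\Sp^1$, while $\vp$ lives in $C^1_c(\OS)$ (or in $C^1(\OS)$ with $\nabla\vp\in C^0_c(\OS)$ if $\partial\Om$ is disconnected, for the same reason as in Proposition~\ref{lem:dualform}: each connected component of $\partial\Om$ allows an additive constant, and this freedom is used to handle the divergence condition on $\sigma$ near the boundary). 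Once the two dual problems are recognised as suprema over the same family of test pairs against the same linear functional, the equality $F(u)=R^{**}_{\textup{el}}(u)$ is immediate.

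The main obstacle is purely translational: the representation in~\cite{BrPoWi15} carries tangent and curvature as separate primal variables with their own support/positivity constraints, so the constraint $\xi^x\cdot\utheta\le -f^*(\xi^\theta)$ defining $K(\theta)$ has to be recovered by eliminating the curvature variable via partial conjugation, and the sign/rotation conventions (the role of $\perp$, the orientation of $\utheta$) must be carefully aligned. No new analytical input is required beyond Proposition~\ref{lem:dualform}.
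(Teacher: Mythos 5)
Your proposal is correct and follows essentially the same route as the paper: the corollary is obtained by observing that the dual representation~\eqref{eq:dualform} from Proposition~\ref{lem:dualform} is identical to formula~(35) of \cite[Theorem~13]{BrPoWi15}, so the two convex lsc functionals, each equal to the same supremum over the same admissible pairs $(\psi,\vp)$, must coincide. The only difference is that the paper simply cites \cite{BrPoWi15} for the dual form of $R^{**}_{\textup{el}}$ rather than re-deriving it by Fenchel--Rockafellar duality and partial conjugation in the curvature variable as you outline, but that re-derivation is just reproducing what the cited theorem already provides.
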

Indeed, formula~\eqref{eq:dualform} is the same as~(35) in \cite[Theorem~13]{BrPoWi15}.
The primal formulation~\eqref{eq:deFu} we introduce here is quite simpler
as the original formulation in~\cite{BrPoWi15}, though, as the ``$\kappa$''
variable is implicit in our formulation (it comes naturally as the
derivative of the orientation) and does not need to be lifted. 

\begin{proof}[Proof of Prop.~\ref{lem:dualform}]
We use the standard perturbation approach to duality~\cite{EkelandTemam},
in the duality $(C_0^0(\OS;\R^3),\M^1(\OS;\R^3))$, where
$C_0^0$ denotes the $L^\infty$-closure of the functions with compact
support (hence, the functions which vanish on the boundary) and $\M^1$
the totally bounded vector-valued Radon measures. Defining, 
for $p\in \M^1(\OS;\R^3)$, the function
\begin{equation}\label{eq:defGp}
G(p)= \inf\left\{ \int_{\OS} h(\theta,\sigma+p)\,:\,
\Div \sigma=0, \int_{\Sp^1}\sigma^x  = Du^\perp \right\}
\end{equation}
so that $F(u)=G(0)$, we observe that $G$ is lower semicontinuous.
Indeed if $p_n\wtos p$ and $\sigma_n$ is a minimizer in the
definition~\eqref{eq:defGp} of $G(p_n)$, 
as before thanks to~\eqref{eq:controlbelowh}, $\sigma_n+p_n$ is
bounded and up to a subsequence, converges to a measure
$\sigma+p$ where $\sigma$ is admissible in~\eqref{eq:defGp}.
It follows that
\[
G(p)\le \int_{\OS}h(\theta,\sigma+p)\le\liminf_n
\int_{\OS}h(\theta,\sigma_n+p_n) = \liminf_n G(p_n).
\]

In particular, we deduce that $G=G^{**}$ so that
\[
F(u) = G(0) = \sup_{\eta\in C_0^0(\OS;\R^3)} - G^*(\eta).
\]
It remains to compute $G^*(\eta)$, to show that~\eqref{eq:dualform}
follows:
\[ %\begin{multline*}
G^*(\eta)=\sup_{p,\sigma} \int_{\OS} \eta\cdot p - h(\theta,\sigma+p)
%\\
= \sup_{\sigma} - \int_{\OS} \eta\cdot\sigma + \sup_p \eta\cdot(\sigma+p)-
h(\theta,\sigma+p)
\] %\end{multline*}
and we find that $G^*(\eta)$ is infinite unless $\utheta\cdot\eta^x+f^*(\eta^\theta)\le 0$ everywhere in $\OS$.
In this case, we let now $\psi(x) = \frac{1}{2\pi}\int_{\Sp^1}\eta^x(x,\theta)d\theta$. Recalling that the sup is over the $\sigma$ admissible,
we find that
\begin{equation}\label{eq:gstar}
G^*(\eta)=\sup_{\sigma} - \int_{\OS} (\eta-\psi)\cdot\sigma -\int_\Om\psi\cdot Du^\perp.
\end{equation}
Now, given any $\sigma'$ with $\Div \sigma'=0$, we can define $u'$
such that $\int_{\Sp^1} \sigma'={Du'}^\perp$ and let
$\sigma = \sigma' + \frac{1}{2\pi}D(u-u')^\perp\otimes d\theta$, in which case
\begin{multline*}
\int_{\OS} (\eta-\psi)\cdot\sigma = \int_{\OS}(\eta-\psi)\cdot\sigma'
+ \frac{1}{2\pi}\int_{\Sp^1} \int_{\Om} (\eta(x,\theta)-\psi(x))\cdot D(u-u')^\perp d\theta 
\\ =\int_{\OS}(\eta-\psi)\cdot\sigma',
\end{multline*}
hence the first supremum in~\eqref{eq:gstar} is the same as the
sup over all $\sigma'$ with vanishing divergence. Hence, it is
zero or $+\infty$, depending on whether $\eta-\psi$ is a gradient
or not. We find eventually $G^*(\eta)$ is finite only when there
exists $\vp\in C^1(\OS)$ with $\nabla\vp=0$ on $\partial\Om\times\Sp^1$,
such that $\eta=\psi+\nabla\vp$.
The proposition follows.
\end{proof}

\begin{remark}\label{rem:smoothing}\textup{
Consider $(\psi,\vp)$ compactly supported continuous functions as in~\eqref{eq:dualform}.
If $t\in (0,1)$, observe that $(\psi_t,\vp_t):=(t\psi,t\vp)$  satisfies
\[
\utheta\cdot(\nabla_x\vp_t+\psi_t)+f^*(\partial_\theta\vp_t)
\le t\left(\utheta\cdot(\nabla_x\vp+\psi)+f^*(\partial_\theta\vp)\right)
+ (1-t)f^*(0)\le -(1-t)\gamma<0.
\]
Let $\rho\in C_c^\infty(B_1^3)$ be a smooth mollifier, $\rho_\e(x,\theta)=
\e^{-3}\rho(x/\e,\theta/\e)$ and
\[
\psi^\e_t = \psi_t*\rho_\e = \int_{B_1^3}\psi_t(x-\e z)\rho(z,\theta)dz d\theta,
\quad\vp^\e_t = \vp_t*\rho_\e.
\]
This is well defined if $\e$ is small enough, as the functions have compact
support. We have that, for $(x,\theta)\in\OS$, thanks to the
convexity of $f^*$, and denoting as before $\eta=\psi+\nabla \vp$ and
$\eta_t=t\eta$,
\begin{multline*}
\utheta \cdot(\nabla_x\vp^\e_t+\psi^\e_t)+f^*(\partial_\theta\vp^\e_t)
\\\le \rho_\e*(\utheta\cdot \eta_t+f^*(\partial_\theta\vp_t))
+ \utheta\cdot\rho_\e*\eta_t-\rho_\e*(\utheta\cdot\eta_t)
\\
\le
-(1-t)\gamma + \int_{B_1^3} \rho(z,\theta')( (\utheta-(\underline{\theta-\e\theta'})) \cdot \eta_t(x-\e z,\theta-\e\theta')).
\end{multline*}
Observing that $\|\utheta-(\underline{\theta-\e\theta'})\|\le \e|\theta'|$,
we deduce that the above is less than $-(1-t)\gamma +t\e\|\eta\|_\infty<0$
as soon as $\e$ is small enough. It follows that the functions
$\psi,\vp$ in~\eqref{eq:dualform} can be assumed to be in $C_c^\infty$.
}
\end{remark}

\section{Numerical Experiments}\label{sec:numexp}

For numerical solution of the proposed model, we need to discretize
both the 2D image domain as well as the 3D domain of the
roto-translation space. Due to the high anisotropy of the energy, one
has to be extremely careful in the choice of the discretization scheme
in order to preserve a maximal degree of rotational invariance while
keeping the numerical diffusion as low as possible. It turns out that
a 2D-3D staggered grid version of an averaged first-order
Raviart-Thomas divergence conforming
discretization~\cite{RaviartThomas1977} yields the best results. More
elaborate discretizations using for example adaptive grids or
higher-order approximations will be subject for future study.

\subsection{Staggered averaged Raviart-Thomas discretization}

\begin{figure}
  \begin{center}
    \vspace*{-2cm}
    \begin{tikzpicture}
      
      \tdplotsetmaincoords{60}{150}
      \tdplotsetrotatedcoords{0}{0}{0}
      \begin{scope}[tdplot_rotated_coords, shift={(0,0,0)}, scale=1.4]
        
      \foreach \y in {2,4}
      {
        \draw[black] (0,\y) -- (6,\y);
      }

      \foreach \x in {2,4}
      {
        \draw[black] (\x,0) -- (\x,6);
      }

      \foreach \y in {1,3,5}
      \foreach \x in {1,3,5}
      {
        \draw[black,fill=black] (\x,\y) circle (0.1cm);
      }
      
      \foreach \y in {2,4}
      \foreach \x in {2,4}
      {
        \draw[gray,fill=gray] (\x,\y) circle (0.1cm);
      }

      \foreach \y in {2,4}
      \foreach \x in {1-0.25,3-0.25,5-0.25}
      {
        \draw[gray, very thick,->] (\x,\y) -- (\x+0.5,\y);
      }
      \foreach \y in {1+0.25,3+0.25,5+0.25}
      \foreach \x in {2,4}
      {
        \draw[gray, very thick,->] (\x,\y) -- (\x,\y-0.5);
      }
    
      \node at (3+0.2,3+0.4) {\scriptsize $(i,j)$};      
      \node at (1.4,4.5) {\scriptsize $\color{gray} (i+\ha,j+\ha)$};
      \node at (0.5,3.3) {\scriptsize $(i+1,j)$};
      \node at (3+0.2,5+0.5) {\scriptsize $(i,j+1)$};
      \node at (2.5,3) {$S_{i,j}$};
      \end{scope}
      
      \tdplotsetmaincoords{60}{150}
      \tdplotsetrotatedcoords{0}{0}{0}
      \begin{scope}[tdplot_rotated_coords, shift={(2.8,2.8, 2.8)}, scale=1.4]

        % bottom+back
        \draw[black,fill=black!5] (0,0,0) -- (0,2,0) -- (2,2,0) -- (2,0,0) -- cycle;
        \draw[black,fill=black!5] (0,0,0) -- (0,2,0) -- (0,2,2) -- (0,0,2) -- cycle;

        % left
        \draw[black,fill=black!5] (0,0,0) -- (0,0,2) -- (2,0,2) -- (2,0,0) -- cycle;

        % back+front
        \draw[black] (0,0,2) -- (0,2,2) -- (2,2,2) -- (2,0,2) -- cycle;
        \draw[black] (2,0,0) -- (2,2,0) -- (2,2,2) -- (2,0,2) -- cycle;

        \draw[dashed,black] (1,1,1) -- (1,1,-2);
        \draw[dashed,gray] (0,0,0) -- (0,0,-2);
        \draw[dashed,gray] (0,2,0) -- (0,2,-2);
        \draw[dashed,gray] (2,0,0) -- (2,0,-2);
        \draw[dashed,gray] (2,2,0) -- (2,2,-2);

        \shade[ball color = gray] (0,0,0) circle (0.1cm);
        \shade[ball color = gray] (2,0,0) circle (0.1cm);
        \shade[ball color = gray] (0,2,0) circle (0.1cm);
        \shade[ball color = gray] (2,2,0) circle (0.1cm);
        \shade[ball color = gray] (0,0,2) circle (0.1cm);
        \shade[ball color = gray] (2,0,2) circle (0.1cm);
        \shade[ball color = gray] (0,2,2) circle (0.1cm);
        \shade[ball color = gray] (2,2,2) circle (0.1cm);

        % flow through the cube
        \draw[black, very thick,->] (1,1,-0.25) -- (1,1,0.25);
        \draw[black, very thick,->] (1,1,1.75) -- (1,1,2.25);
        \draw[black, very thick,->] (1,-0.25,1) -- (1,0.25,1);
        \draw[black, very thick,->] (1,1.75,1) -- (1,2.25,1);
        \draw[black, very thick,->] (-0.25,1,1) -- (0.25,1,1);
        \draw[black, very thick,->] (1.75,1,1) -- (2.25,1,1);
        
        \node at (1.2,1.5,1.55) {\scriptsize $(i,j,k)$};
        \node at (1,3,1.55) {$V_{i,j,k}$};
        
        % center of cube
        \shade[ball color = black] (1,1,1) circle (0.1cm);

      \end{scope}
      
    \end{tikzpicture}

  \end{center}\caption{Spatial discretization of the image domain $\Om$ and the
    domain of the RT space $\OS$. The bottom grid represents the
    %square pixels $S_{i,j}$ of the
    2D image, and the image intensities $u$ are stored at the vertices
    of the squares $S_{i,j}$.
    The gray arrows indicate
    differences between the pixels. The volume $V_{i,j,k}$ represents
    one element of the corresponding discretization of the RT
    space. The black arrows through the faces of the volume represent
    the 3D vector field $\sigma$.}\label{fig:grid}
\end{figure}
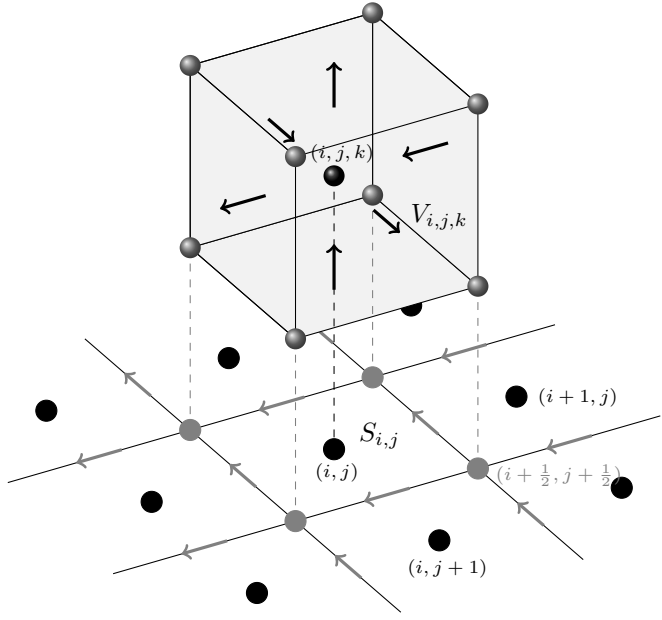

Figure~\ref{fig:grid} shows a visualization of our combined 2D-3D
staggered grid approach. The first grid is given by a 2D grid of
%square 
pixels discretizing the image domain $\Om$. The second grid is
given by a 3D grid of volumes which discretize the roto-translation
space $\OS$. In the visualization we show a few image pixels of the 2D
grid (gray nodes) and one cube of the corresponding 3D grid.

First, we start by describing the discretization of the image
domain. We assume here that $\Om$ is a square or rectangle, or in
general a convex domain, which we will discretize at a scale
$\dx>0$. We will restrict ourselves to the case of a rectangle to
simplify the notation. Then, we let $\Omega_{\dx}$ denote the
discretized image domain which is given by
\[
\Omega_{\dx} = \mathrm{int}\left( \bigcup\Big\{ S_{i,j}\,:
(i,j)\in\Z^2, \, S_{i,j} \subset\Om \Big\} \right),
\]
where
\begin{equation}\label{defSij}
S_{i,j} = \left[(i-\12)\dx, (i+\12)\dx) \right)\times \left[(j-\12)\dx,
    (j+\12)\dx\right),
\end{equation}
denotes the square centered around the index $\i=(i,j)$. 
The intensity
values of the discrete image are stored at the vertices half-grid
points (such as the four vertices of $S_{i,j}$),
and for notational simplicity we introduce the following
index set:
\[
\I = \left\{(i-\12,j-\12) : 1 \leq i \leq N_1, \; 1 \leq j
\leq N_2\right\},
\]
with $N_1, N_2 > 0$. Hence, the discrete image $u \in
\R^{\I}$ consists of $N_1\times N_2$ discrete pixels with
values $(u_{\i})_{\i\in \I}$.  We will later also need
the following index sets:
\begin{align*}
\I^1 =& \left\{(i-\12,j) : 1 \leq i \leq N_1, \;
1 \leq j < N_2\right\},\\ \I^2 =& \left\{(i,j-\12) : 1 < i \leq N_1, \; 1 \leq j \leq N_2\right\},
\end{align*}
which index the middle points of the edges between two adjacent pixels
along the first and second spatial dimensions.

Next, we let $\dt>0$ be the discretization step for the angular
variable. We must assume that $\dt = 2\pi/N_\theta$ with $N_\theta >
0$. Hence $\Sp^1$ is discretized by $N_\theta$ intervals $[(k-\12)\dt,
  (k+\12)\dt)$ for $1\leq k \leq N_\theta$. We then denote by
  $\Gamma_{\delta}$ with $\delta=(\dx,\dt)$ the discretized 3D
  roto-translation space. It is given by
\[
\Gamma_{\delta} = \mathrm{int}\left( \bigcup\Big\{ V_{i,j,k}\,:
(i,j,k)\in\Z^3, \, V_{i,j,k} \subset \OS \Big\} \right),
\]
where
\begin{equation}\label{eq:defVijk}
V_{i,j,k} = S_{i,j} \times \left[(k-\12)\dt,(k+\12)\dt\right)
\end{equation}
denotes the volume centered around the 3D index $\j=(i,j,k)$. We associate
an index set $\J$ of center locations of the volumes
$V_{i,j,k}$:
\begin{equation}\label{eq:defJ}
\J = \left\{(i,j,k) : 1 \leq i < N_1, \; 1 \leq j <
N_2, \; 1 \leq k \leq N_\theta\right\}.
\end{equation}
We shall also introduce the following three index sets:
\begin{align*}
  \J^1 =& \{(i-\12,j,k): 1\leq i \leq N_1, 1\leq j < N_2, 1 \leq k \leq N_\theta\},\\
  \J^2 =& \{(i,j-\12,k): 1\leq i < N_1, 1\leq j \leq N_2, 1 \leq k \leq N_\theta\},\\
  \J^\theta =& \{(i,j,k-\12): 1\leq i < N_1, 1\leq j < N_2, 1 \leq k \leq N_\theta\},
\end{align*}
which we will use to index the facets of the volumes. Observe that the
3D points $((i-\ha)\dx,j\dx,k\dt)$ for all $(i-\ha,j,k) \in
\J^1$ correspond to the middle points of the facets
\begin{equation}\label{eq:defF1}
  \F^1_{(i-\ha,j,k)}=\{(i-\12)\dx\}\times [(j-\12)\dx, (j+\12)\dx) \times [(k-\12)\dt, (k+\12)\dt),
\end{equation}
which are orthogonal to the first spatial
dimension. Similarly, the 3D points $(i\dx,(j-\ha)\dx,k\dt)$ for
all $(i,j-\12,k) \in \J^2$ correspond to the middle
points of the facets
\begin{equation}\label{eq:defF2}
  \F^2_{(i,j-\ha,k)} = [(i-\12)\dx, (i+\12)\dx) \times \{(j-\12)\dx\}\times [(k-\12)\dt, (k+\12)\dt),
\end{equation}
which are orthogonal to the second spatial dimension. Finally, the 3D
points $(i\dx,j\dx,(k-\ha)\dt)$ for all $(i,j,k-\12) \in
\J^\theta$ correspond to the middle points of the facets
\begin{equation}\label{eq:defFt}
  \F^\theta_{(i,j,k-\ha)} = [(i-\12)\dx, (i+\12)\dx) \times [(j-\12)\dx, (j+\12)\dx) \times \{(k-\12)\dt\},
\end{equation}
which are orthogonal to the angular dimension.

Now, we consider a discrete vector field $\sigma=(\sigma^1, \sigma^2,
\sigma^\theta)$ with $\sigma^1 \in \R^{\J^1}$, $\sigma^2 \in
\R^{\J^2}$, and $\sigma^\theta \in
\R^{\J^\theta}$. The values
$(\sigma^1_{\j})_{\j \in \J^1}$,
$(\sigma^2_{\j})_{\j \in \J^2}$,
$(\sigma^\theta_{\j})_{\j\in \J^\theta}$
define the average fluxes through the facets
$(\F^1_{\j})_{\j \in \J^1}$,
$(\F^2_{\j})_{\j \in \J^2}$ and
$(\F^\theta_{\j})_{\j \in
  \J^\theta}$, respectively.

The discrete vector field $\sigma$ can also be seen as a
Raviart-Thomas vector field~\cite{RaviartThomas1977}, defined
everywhere in $\Gamma_{\dx,\dt}$. It is obtained by an affine
extension:
\begin{multline}\label{eq:RaTho}
\sigma_{RT}(x_1,x_2,\theta) = \left(
\sum_{\j = (i-\ha,j,k)\in \J^1} \sigma^1_{\j} \Delta\left(\frac{x_1-(i-\ha)\dx}{\dx}\right), \right.\\\left.
\sum_{\j = (i,j-\ha,k)\in \J^2} \sigma^2_{\j} \Delta\left(\frac{x_2-(j-\ha)\dx}{\dx}\right),
\sum_{\j = (i,j,k-\ha)\in \J^\theta} \sigma^\theta_{\j} \Delta\left(\frac{\theta-(k-\ha)\dt}{\dt}\right)
  \right),
\end{multline}
where $\Delta(t) = \max(0,1-|t|)$ is the usual linear interpolation
kernel.

It is well know that such a Raviart-Thomas field has a divergence
(defined everywhere in the distributional sense)
which is obtained in each volume
by summing the finite differences of the average
fluxes through each pair of opposite facets.
In our model, the discrete field $\sigma$
is constrained to be divergence free, hence for each volume
$V_{\j}$, we must impose that:
\begin{equation}\label{eq:freediv}
\frac{\sigma^1_{i+\ha,j,k}-\sigma^1_{i-\ha,j,k}}{\dx}+
\frac{\sigma^2_{i,j+\ha,k}-\sigma^2_{i,j-\ha,k}}{\dx}+
\frac{\sigma^\theta_{i,j,k+\ha}-\sigma^\theta_{i,j,k-\ha}}{\dt} = 0.
\end{equation}
The roto-translation space is periodic in the $\theta$
direction and hence, whenever $k=N_\theta$, we identify the index
$N_\theta+\ha$ with the index $\ha$ in the computation of the last
term of the discrete divergence.

To simplify our presentation, we introduce a discrete divergence
operator $\mathcal{D}:\R^{\J^1 \cup \J^1 \cup
  \J^\theta}\rightarrow \R^{\J}$, so that the
divergence free constraint~\eqref{eq:freediv} can be compactly written
as
\[
\mathcal{D}\sigma = 0.
\]
Next, to implement a discrete version of the consistency condition we
observe that averaging (in the direction of $\theta$) the first two
components of the field $\sigma$ yields a rotated (by $\pi/2$)
gradient which should locally coincide with the finite differences of
the discrete image $u$. This yields the following discrete consistency
condition:
\begin{equation}\label{eq:compatsigmau}
\begin{cases}
\dt\sum_k \sigma^1_{i+\ha,j,k} = \frac{1}{\dx}
\left(u_{i+\ha,j+\ha}-u_{i+\ha,j-\ha}\right) \\[2mm]
\dt\sum_k \sigma^2_{i,j+\ha,k} = -\frac{1}{\dx}
\left(u_{i+\ha,j+\ha}-u_{i-\ha,j+\ha}\right).
\end{cases}
\end{equation}
We introduce a projection operator $\mathcal{P}:
\R^{\J^1\cup\J^2} \rightarrow \R^{\I^1 \cup
  \I^2}$ and a discrete (rotated) gradient operator
$\mathcal{G}: \R^\I \rightarrow \R^{\I^1 \cup
  \I^2}$ such that we can write the above discrete
compatibility condition as
\[
\mathcal{P}\sigma = \mathcal{G}u.
\]

In order to approximate the continuous energy~\eqref{eq:deFu} with our
discrete Raviart-Thomas field $\sigma_{RT}$ we can %try to
use different types of quadrature rules. After various attempts, we found
out that simply summing the values at the center points of the volumes
$V_{i,j,k}$ provides the highest flexibility for the discrete field
$\sigma$ to concentrate on thin lines, and, in turn, yields the most
faithful numerical results.

Letting thus $\sigma_{i,j,k} = \sigma_{RT}(i\dx, j\dx, k\dt)$, we have that
this discrete field is obtained by the following formula:
\begin{multline}\label{defhatsigma}
  \hat\sigma_{i,j,k} = (\hat\sigma^1_{i,j,k}, \hat\sigma^2_{i,j,k}, \hat\sigma^\theta_{i,j,k}) = \\
  \ha(\sigma^1_{i+\ha,j,k}+\sigma^1_{i-\ha,j,k}, \sigma^2_{i,j+\ha,k}+\sigma^2_{i,j-\ha,k},\sigma^\theta_{i,j,k+\ha}+\sigma^\theta_{i,j,k-\ha}).
\end{multline}
We again introduce an operator $\mathcal{A}:\R^{\J^1 \cup
  \J^2 \cup \J^\theta} \rightarrow
(\R^3)^{\J}$ such that the above averaging operation can be
written
\[
\hat \sigma = \mathcal{A}\sigma.
\]
Now, using the volume centered quadrature rule, the discrete energy 
is
\begin{equation}\label{eq:def-disc-energy}
  F_\delta(u) = \min_{\sigma} \left\{ \dx^2 \dt \sum_{\j=(i,j,k) \in
    \J} h(k\dt, (\mathcal{A}\sigma)_{\j}):
  \mathcal{D} \sigma = 0, \; \mathcal{P}\sigma - \mathcal{G}u =
  0\right\}.
\end{equation}
The consistency of this energy, as $\delta:=(\dx,\dt)\to 0$,
 with the continuous energy $F$ defined
in~\eqref{eq:deFu} is studied in Appendix~\ref{app:consist}.

The regularization function $h(\theta, p)$, where $\theta \in \Sp^1$
and $p = (p^x, p^\theta) \in \R^3$ is defined as in~\eqref{eq:defh},
and the function $f(t)$, which appears in its definition will be one
of the following classical examples of convex functions:
\begin{align}
  f_1(t) &= 1 + \alpha |t|, \label{eq:tac}\tag{TAC}\\
  f_2(t) &= \sqrt{1 + \alpha^2 |t|^2} \label{eq:tgl}\tag{TRV},\\  
  f_3(t) &= 1 + \alpha^2 |t|^2 \label{eq:tsc}\tag{TSC}.
\end{align}
In all examples, the regularizing function is a combination of length
and curvature regularization and the parameter $\alpha > 0$ can be
used to adjust the influence of the curvature regularization.

The first function, $f_1$ is the sum of length and absolute curvature,
hence we call the corresponding regularizer ``total absolute
curvature'' (TAC). One of its main features is that it allows for
sharp corners in the level sets of the image. 
Interestingly, since 
integrating
the absolute curvature along the boundary of a shape is constantly
$2\pi$ for all convex shapes (and in general, is scale independent)
then the main effect of the curvature term in this energy is to
penalize non-convex shapes, regardless of their size.
%Moreover, integrating
%the absolute curvature along the boundary of a shape is constantly
%$2\pi$ for all convex shapes and hence in some sense it favors convex
%shapes.

The function $f_2$ combines length and curvature through an Euclidean
metric and hence, it corresponds to the total variation of the lifted
curve in the RT space, hence we consequently denote this regularizer
``total roto-translational variation'' (TRV). For relatively small
curvature, it favors smooth shapes, but it also allows sharp
discontinuities.

Function $f_3$ penalizes squared curvature plus length and hence is
equivalent to the Elastica energy. We call this regularizer ``total
squared curvature'' (TSC). It is well-known that while length
regularization favors smaller shapes, quadratic curvature favors
larger shapes. Hence, the interplay between length and curvature
regularization removes the shrinkage bias and leads to smooth shapes.

Based on the ``basis'' functions $f_l$, $l=1,2,3$, we obtain the
following convex regularization functions in the RT space.
\begin{eqnarray}\label{eq:defhi}
  \text{(TAC)}\quad h_1(\theta, p) &=&
  \begin{cases}
    |p^x| + \alpha |p^\theta| & \text{if } p^x = \utheta s, \; s \geq 0\\
    +\infty & \text{else,}
  \end{cases}\\
  \text{(TRV)}\quad h_2(\theta, p) &=&
  \begin{cases}
    \sqrt{|p^x|^2 + \alpha^2 |p^\theta|^2} & \text{if } p^x = \utheta s, \; s \geq 0\\
    +\infty & \text{else,}
  \end{cases}\\
  \text{(TSC)}\quad h_3(\theta, p) &=&
  \begin{cases}
    |p^x| + \alpha^2 \frac{|p^\theta|^2}{|p^x|} & \text{if } p^x =
    \utheta s, \; s > 0\\
    +\infty & \text{else.}
  \end{cases}
\end{eqnarray}
Observe in particular, that the above definitions already properly
take into account the correct values of the function $h_l$, $l=1,2,3$
for $p^x = 0$.

In the following sections we will apply the proposed curvature based
regularization functions to a variety of image- and shape processing
problems. For this, we consider generic optimization problems of the
form
\begin{equation}\label{eq:def-disc-problem}
\min_u F_\delta(u) + G(u),
\end{equation}
where $G(u)$ is a convex, lsc. function defining an image-based convex
data fidelity term, possibly dependent on the pixel location.

\subsection{Primal-dual optimization}

In this section, we show how to compute a minimizer of the non-smooth
convex optimization problem~\eqref{eq:def-disc-problem}. For
notational simplicity, we first divide the whole objective
function~\eqref{eq:def-disc-problem} by $\dx^2\dt$ and assume that the
remaining factors are absorbed by the data fitting term.

In order to solve the constrained optimization
problem~\eqref{eq:def-disc-problem}, we consider its Lagrangian
(saddle-point) formulation:
\begin{multline}\label{eq:saddle}
  \min_{u,\sigma} \max_{\phi, \psi, \xi}
  \sum_{\j \in \J} (\mathcal{A}\sigma)_{\j} \cdot \xi_{\j} \hspace{1mm} - \hspace{-4mm}
  \sum_{\j = (i,j,k) \in \J} \hspace{-2mm} h^*(k\dt,\xi_{\j}) +  
  G(u) \hspace{1mm} + \\
  \sum_{\j\in \J} (\mathcal{D}\sigma)_\j\phi_\j + \hspace{-2mm}
  \sum_{\i \in \I^1 \cup \I^2} \hspace{-3mm}\left((\mathcal{P}\sigma)_\i - (\mathcal{G}u)_\i\right) \psi_\i,
\end{multline}
where $\phi \in \R^{\J}$, $\xi = (\xi^1, \xi^2,
\xi^\theta)\in (\R^3)^{\J}$ and $\psi = (\psi^1, \psi^2) \in
\R^{\I^1\cup\I^2}$ with $\psi^1 \in
\R^{\I^1}$ and $\psi^2 \in \R^{\I^2}$ are the dual
variables (Lagrange multipliers or discrete test functions). The
function $h^*$ denotes the convex conjugate of the function
$h$. Recall that $h$ is the support function of the convex set
\[
H(\theta) = \{\xi=(\xi^x,\xi^\theta) \in \R^3: \xi^x\cdot \utheta \leq
-f^*(\xi^\theta) \}, \quad \utheta=(\cos \theta, \sin \theta),
\]
where $f^*$ denotes the convex conjugate of $f$. Hence, the convex
conjugate $h^*$ is simply the indicator function of the set $H$:
\[
h^*(\theta, \xi) = \begin{cases}
  0 & \text{if } \xi \in H(\theta)\\
  \infty & \text{else.}
\end{cases}
\]
The problem~\eqref{eq:saddle} is a saddle-point problem which is
separable in the non-linear terms and hence falls into the class of
problems that can be solved by the first-order-primal-dual algorithm
with diagonal preconditioning and
overrelaxation~\cite{CP2011,PC2011,CP2016}. In order to make the
algorithm implementable, we need efficient algorithms to compute the
projection operators $\mathrm{proj}_{H(\theta)}$ and proximity
operators $\mathrm{prox}_{\tau G}$. They are defined as the
unique minimizers of the minimization problems:
\[
\mathrm{proj}_{H(\theta)}\left(\eta\right) = \arg\min_{\xi\in
  H(\theta)} \frac{1}{2\tau}|\xi-\eta|^2, \; \forall \theta \in \Sp^1,
\]
and,
\[
\mathrm{prox}_{\tau G}\left(v\right) = \arg\min_u G(u) +
\frac{1}{2\tau}\norm{u-v}^2,
\]
for some $\tau > 0$. In what follows, we will detail the projection
operator for different instances of the convex sets $H$ (respectively
regularization functions $h$), the proximity maps for $G$ will be
detailed as soon as they are needed in the numerical results.

The general idea for performing the projection $\xi =
\mathrm{proj}_{H(\theta)}(\eta)$ of a point $\eta=(\eta^x, \eta)$ onto
the set $H(\theta)$ is outlined in Figure~\ref{fig:projection}. The
Figure on the left hand side represents the set $H$ in the $\xi^x$
plane, and the Figure on the right hand side shows the ``profile'' of
the set which is obtained by cutting along the
$\xi_\theta^x\times\xi^\theta$ plane.

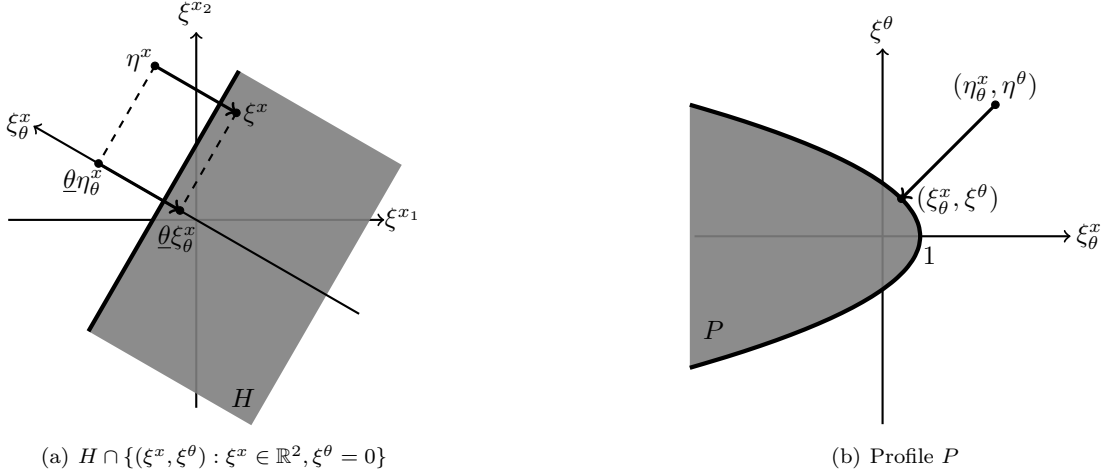
\begin{figure}[t!]
  \subfigure[$H \cap \{(\xi^x,\xi^\theta): \xi^x\in \R^2, \xi^\theta=0\}$]{
    \begin{tikzpicture}[scale = 0.5]

      \draw  (-5,0) edge[->, thick] (5,0);
      \draw  (0,-5) edge[->, thick] (0,5);      
      \node at (5.5,0) {$\xi^{x_1}$};
      \node at (0,5.5) {$\xi^{x_2}$};

      % rotation = 60 deg
      % (4,1)--(-4,1)--(-4,-4)--(4,-4)
      \fill[opacity=0.9, gray] (1.1340,3.9641)--(-2.8660,-2.9641)--
      (1.4641,-5.4641)--(5.4641,1.4641);      
      \draw  (-2.8660,-2.9641) edge[ultra thick] (1.1340,3.9641);
      \node at (1.25,-4.75) {$H$};

      % theta direction
      \draw  (4.3301,-2.5000) edge[->, thick] (-4.3301,2.5000);
      \node at (-4.7,2.5) {$\xi_\theta^x$};

      % \eta^x = (3,3) unrotated
      \draw[fill=black] (-1.0981, 4.0981) circle (0.1cm);
      \node at (-1.5,4.25) {$\eta^x$};

      % p^x = (3,0) unrotated
      \draw[fill=black] (1.0670,2.8481) circle (0.1cm);
      \node at (1.6,2.8) {$\xi^x$};
      
      \draw  (-1.0981, 4.0981) edge[->, very thick] (1.0670,2.8481);

      % (0, 3)
      \draw[fill=black] (-2.5981,1.5000) circle (0.1cm);
      \node at (-3,1.0) {$\utheta \eta_\theta^x$};
      
      \draw (-1.0981, 4.0981) edge[dashed, thick] (-2.5981,1.5000);

      % (0,0.5)
      \draw[fill=black] (-0.4330,0.2500) circle (0.1cm);
      \node at (-0.5,-0.5) {$\utheta \xi_\theta^x$};
      
      \draw (1.0670,2.8481) edge[dashed, thick] (-0.4330,0.2500);

      \draw  (-2.5981,1.5000) edge[->, very thick] (-0.4330,0.2500);
      
  \end{tikzpicture}
  }\hfill    
  \subfigure[Profile $P$]{
    \begin{tikzpicture}[scale = 0.5]

      \draw  (-5,0) edge[->, thick] (5,0);
      \draw  (0,-5) edge[->, thick] (0,5);      
    
      \node at (5.5,0) {$\xi_\theta^{x}$};
      \node at (0,5.5) {$\xi^{\theta}$};

      \fill [opacity=0.9, gray, domain=-3.5:3.5, variable=\y]
      (1-3.5*3.5/2,3.5)
      -- plot ({1-\y*\y/2}, {\y})
      -- cycle;

      \node at (-4.5,-2.5) {$P$};
      
      \draw[scale=1.0,domain=-3.5:3.5,smooth,variable=\y,black, ultra thick]
      plot ({1-\y*\y/2},{\y});
      
      \node at (1.25,-0.5) {$1$};
      
      % \eta_\theta^x = (3,3.5) unrotated
      \draw[fill=black] (3, 3.5) circle (0.1cm);
      \node at (3,4) {$(\eta_\theta^x,\eta^\theta)$};

      % \xi_\theta^x = (0.5,1) unrotated
      \draw[fill=black] (0.5, 1) circle (0.1cm);
      \node at (2,1) {$(\xi_\theta^x,\xi^\theta)$};
      
      \draw  (3,3.5) edge[->, very thick] (0.5,1);
      
  \end{tikzpicture}}
  \caption{Projection of the point $(\eta^x,\eta^\theta)$ onto the
    convex sets $H$. First, the point $(\eta_\theta^x, \eta^\theta)$
    is projected onto the profile $P$ which yields $(\xi_\theta^x,
    \xi^\theta)$, then $\xi^x$ is computed from $\xi_\theta^x$ via the
    geometric relationship $\xi^x = \eta^x - \utheta(\eta^x_\theta -
    \xi^x_\theta)$.}\label{fig:projection}
\end{figure}

We let $\eta^x_\theta = \eta^x\cdot\utheta$ and define
$(\xi^x_\theta,\xi^\theta) = \mathrm{proj}_{P}(\eta^x_\theta,
\eta^\theta)$ as the projection of the point $(\eta^x_\theta,
\eta^\theta)$ onto the ``profile''
\begin{equation}\label{eq:profile}
P = \{(\xi^x_\theta,\xi^\theta) \in \R^2: \xi^x_\theta \leq
-f^*(\xi^{\theta}) \}.
\end{equation}
Then, using simple geometric reasoning, we see that the variable
$\xi^x$ can be recovered from $\xi^\theta$ by
\[
  \xi^x = \eta^x - \utheta(\eta^x_\theta - \xi^x_\theta).
\]

It remains to detail the projections onto different profiles $P$.
\begin{itemize}
\item TAC: $f_1(t) = 1+\alpha|t|$: The convex conjugate of the
  function $f_1$ is given by
  \[
  f_1^*(s) = \begin{cases}
    -1 & \text{if } |s| \leq \alpha\\
    \infty & \text{else},
  \end{cases}
  \]
  and in turn the profile $P_1$ is given by
  \[
  P_1 = \{(\xi^x_\theta,\xi^\theta) \in \R^2: \xi^x_\theta \leq
  1, \; |\xi^{\theta}| \leq \alpha \}
  \]
  It is straightforward that the projection of a point
  $(\eta^x_\theta, \eta^\theta)$ onto the profile $P_1$ can be
  performed via simple truncation operations
  \[
  \begin{pmatrix}
    \xi^x_\theta\\ 
    \xi^{\theta}
  \end{pmatrix} =
  \begin{pmatrix}    
    \min(1, \eta^x_\theta)\\
    \max(-\alpha, \, \min(\alpha, \eta^\theta))
  \end{pmatrix}.
  \]
\item TRV: $f_2(t) = \sqrt{1+\alpha^2t^2}$: A simple computation shows that
  the convex conjugate of $f_2(t)$ is given
  \[
  f_2^*(s) = \begin{cases}
    -\sqrt{1-s^2/\alpha^2} & \text{if } |s| \leq \alpha\\
    \infty & \text{else.}
  \end{cases}
  \]
  Inserting the expression of the convex conjugate into the
  profile~\eqref{eq:profile} and squaring both sides, we obtain
  \[
  P_2 = \{(\xi^x_\theta,\xi^\theta) \in \R^2: \max(0,\xi^x_\theta)^2 +
  (\xi^\theta/\alpha)^2 \leq 1\}.
  \]
  In what follows, we assume that $\max(0,\eta^x_\theta)^2 +
  (\eta^\theta/\alpha)^2 > 1$ since otherwise we do not need to
  project the point. We first treat the case $\eta^x_\theta \leq
  0$. It is easy to see that in this case the solution of the
  projected point is given by
  \[
  \begin{pmatrix}
    \xi^x_\theta\\ 
    \xi^{\theta}
  \end{pmatrix} =
  \begin{pmatrix}
    \eta_\theta^x\\
        \max(-\alpha, \, \min(\alpha, \eta^\theta))
  \end{pmatrix}.
  \]
  In case $\eta^x_\theta > 0$, computing the projection of a point
  $(\eta_\theta^x, \eta^\theta)$ onto the boundary of $P_2$ amounts to
  solve the following equality constrained optimization problem
  \[
  \min_{(\xi^x_\theta)^2 + (\xi^\theta/\alpha)^2 = 1} \frac12
  |\xi_\theta^x-\eta_\theta^x|^2 + \frac12 |\xi^\theta-\eta^\theta|^2.
  \]
  The Karush-Kuhn-Tucker (KKT) optimality conditions for the above
  problem are given by
  \begin{eqnarray*}
    \xi_\theta^x-\eta_\theta^x + 2\lambda(\xi^x_\theta) &=&
    0,\\ \xi^\theta-\eta^\theta + 2\lambda\xi^\theta/\alpha^2 &=&
    0,\\ (\xi^x_\theta)^2 + (\xi^\theta/\alpha)^2 -1 &=& 0,
  \end{eqnarray*}
  where $\lambda > 0$ is the Lagrange multiplier which is positive
  since the point $(\eta_\theta^x, \eta^\theta)$ was assumed to be
  outside $P_2$. Combining the first three equations shows that the
  Lagrange multiplier is computed from the roots of the fourth-order
  polynomial
  \[
  (\alpha+2\lambda/\alpha)^2(\eta^x_\theta )^2 +
  (1+2\lambda)^2(\eta^\theta)^2 -
  (1+2\lambda)^2(\alpha+2\lambda/\alpha)^2 = 0.
  \]
  Let us observe that in case $\alpha=1$, the solution for $\lambda$
  is particularly simple, indeed
  \[
  \lambda = \frac12(\sqrt{(\eta_\theta^x)^2 + (\eta^\theta)^2}-1),
  \]
  such that from the first two equations we obtain for the projected
  point
  \[
  \begin{pmatrix}
    \xi^x_\theta\\ 
    \xi^{\theta}
  \end{pmatrix} = \frac1{\sqrt{(\eta_\theta^x)^2 + (\eta^\theta)^2}}
  \begin{pmatrix}
    \eta_\theta^x\\
    \eta^\theta
  \end{pmatrix}.
  \]
  In the general case $\alpha \not=1$ we compute $\lambda$ by applying
  Newton's algorithm to find the (correct) root from the fourth order
  polynomial. In our experiments it turns out that a large enough
  initial value, e.g. $\lambda = 10^3$ provides a good initialization
  for Newton's algorithm.  Usually, we need less than 5-10 iterations
  of Newton's algorithm to converge to a solution with feasibility
  error less than $10^{-9}$. From the first two equations of the KKT
  conditions the expression of the projected point is given by:
  \[
  \begin{pmatrix}
    \xi^x_\theta\\ 
    \xi^{\theta}
  \end{pmatrix} =
  \begin{pmatrix}
    \eta_\theta^x/(1+2\lambda)\\
    \eta^\theta/(1+2\lambda/\alpha^2)
  \end{pmatrix}.
  \]

\item TSC: $f_3(t) = 1+\alpha^2t^2$: The convex conjugate of $f_3$ is
  computed as
  \[
  f_3^*(s) = (s/(2\alpha))^2-1,
  \]
  and hence the profile $P_3$ is given by
  \[
  P_3 = \{(\xi^x_\theta,\xi^\theta) \in \R^2: \xi_\theta^x +
  (\xi^\theta/(2\alpha))^2 \leq 1\}.
  \]
  Following the same approach as before a point $(\eta_\theta^x,
  \eta^\theta)$ with $\eta_\theta^x + (\eta^\theta/(2\alpha))^2 > 1$
  is projected onto $P_3$ by solving the KKT optimality
  conditions
  \begin{eqnarray*}
    \xi_\theta^x-\eta_\theta^x + \lambda &=&
    0,\\ \xi^\theta-\eta^\theta + \lambda\xi^\theta/(2\alpha^2) &=&
    0,\\ \xi^x_\theta + (\xi^\theta/(2\alpha))^2 -1 &=& 0.
  \end{eqnarray*}
  Combining the above equations, the Lagrange multiplier is obtained
  by finding the correct root from the third-order polynomial
  \[
  (2\alpha^2 + \lambda)^2(\eta_\theta^x-1-\lambda) + (\alpha\eta^\theta)^2 = 0.
  \]
  We again apply Newton's method and observe that for a large enough
  initial value, e.g. $\lambda = 10^3$, Newton's algorithm rapidly
  converges to the correct root of the polynomial. The projected point
  is finally given by
  \[
  \begin{pmatrix}
    \xi^x_\theta\\ 
    \xi^{\theta}
  \end{pmatrix} =
  \begin{pmatrix}
    \eta_\theta^x - \lambda\\
    \eta^\theta/(1+\lambda/(2\alpha^2))
  \end{pmatrix}.
  \]
\end{itemize}

\subsection{Computing a disk}

\begin{figure}[t!]
  \centering
  \subfigure[$u^0$]{\includegraphics[width=0.19\textwidth]{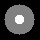}\label{fig:approx-disk-a}}\\
  \subfigure[$N_\theta=4$]{\includegraphics[width=0.19\textwidth]{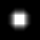}}\hfill
  \subfigure[$N_\theta=8$]{\includegraphics[width=0.19\textwidth]{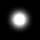}}\hfill
  \subfigure[$N_\theta=16$]{\includegraphics[width=0.19\textwidth]{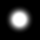}}\hfill
  \subfigure[$N_\theta=32$]{\includegraphics[width=0.19\textwidth]{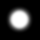}}\hfill
  \subfigure[$N_\theta=64$]{\includegraphics[width=0.19\textwidth]{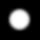}\label{fig:disk-streams}}\\
  \centering \includegraphics[width=0.7\textwidth]{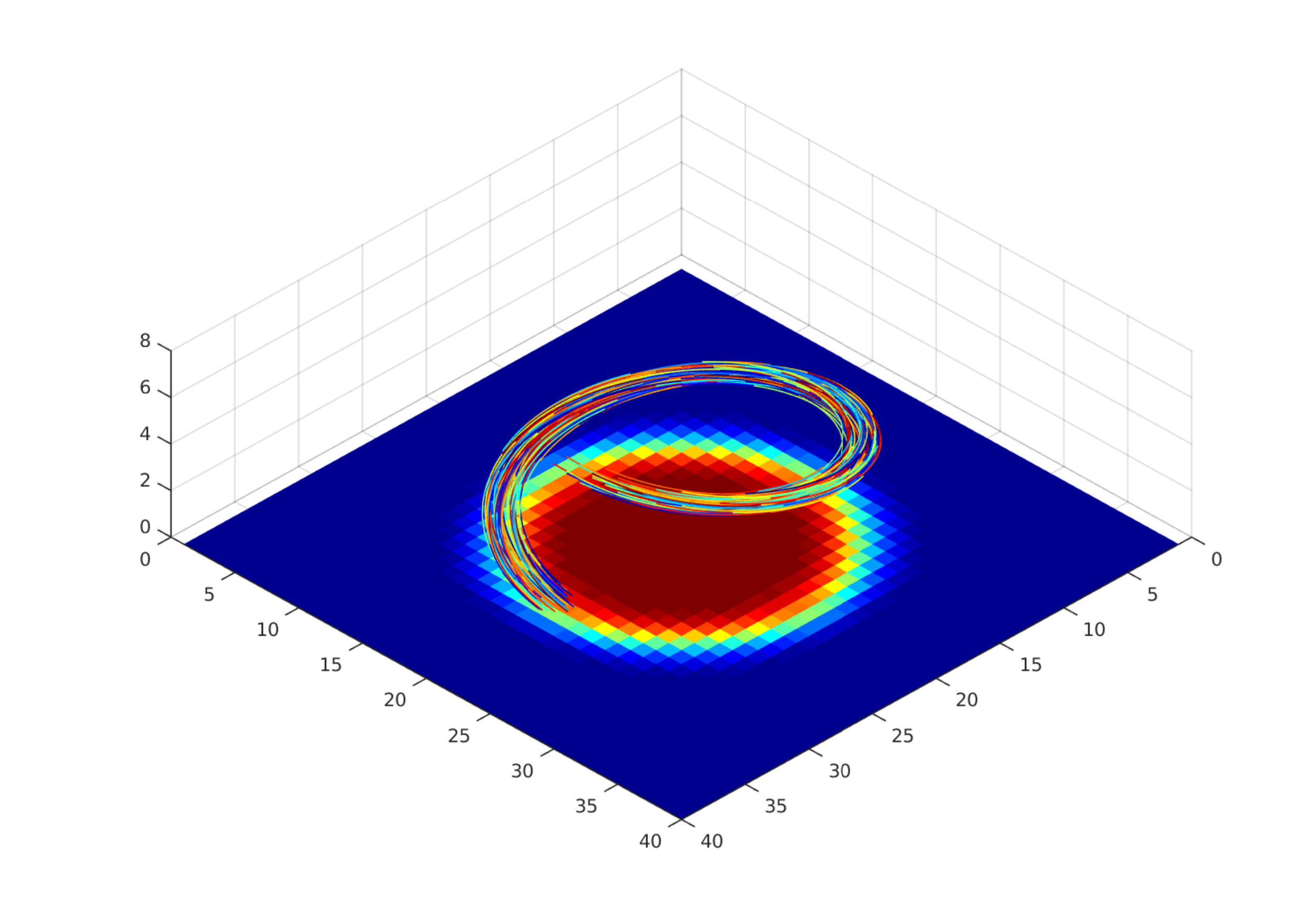}
  \caption{Computing a disk by minimizing the TSC energy. We set
    $\alpha=10$ and hence the optimal solution of the inpainting
    problem is given by a disk of radius $r=10$. (a) shows the input
    image $f$ of size $40 \times 40$ pixels together with its
    inpainting domain indicated by gray pixels and (b)-(e) show the
    computed solution images $u$ for different numbers of discrete
    orientations $N_\theta$. (f) shows a ``stream-line'' plot of the
    averaged Raviart-Thomas vector field $\hat \sigma$ on top of a
    false-color version of the corresponding image
    $u$.}\label{fig:approx-disk}
\end{figure}

In the first example, we consider the most basic numerical experiment,
which is using TSC energy to compute a disk of a given radius. The aim
of this experiment is to investigate the quality of our proposed
discretization scheme, in particular when using a different number of
discrete orientations.  Consider a 2D disk $B(0, r) = \{x \in \R^2:
|x| \leq r\}$ of radius $r > 0$ centered around the origin. The TSC
energy of the boundary of the disk $B(0,r)$ is given by
\[
\int_{\partial B(0,r)} (1+\alpha^2\kappa^2) d\H^1 = 2\pi (r+\alpha^2/r).
\]
Here our disk will be represented by its characteristic function
$\chi_{B(0,r)}(x)$. We can force the minimizer of the TSC energy to
yield a disk using suitable boundary conditions. For example, we can
force at least one point inside the disk to be one and at least one
point outside the disk to be zero. The following simple computation
shows that the minimizer of the TSC energy will be a disk of radius
$r=\alpha$, indeed:
\begin{multline*}
\int_{\partial B(0,r)} (1+\alpha^2\kappa^2) d\H^1 = \\\int_{\partial
  B(0,r)} \left(\alpha^2(\kappa - \frac{1}{\alpha})^2 + 2\alpha\kappa \right) d\H^1 = \int_{\partial
  B(0,r)} \alpha^2(\kappa - \frac{1}{\alpha})^2d\H^1 + 4\pi\alpha,
\end{multline*}
which is minimized if $\kappa=1/r=1/\alpha$ everywhere.

We set up a our optimization problem using a 2D grid of $N_1\times
N_2=40\times 40$ pixels. We will use a different number of discrete
orientations $N_\theta \in \{4,8,16,32,64\}$ in order to investigate
the quality of the approximation depending on $N_\theta$.
In this and the subsequent experiments, the discretization
width of the spatial grid is set to $\dx=1$ and the discretization
width of the angular dimension is set to $\dt=2\pi/N_\theta$. The aim
of this experiment is to compute a disk of radius $r=10$ pixels. We
therefore define an inpainting domain $\mathcal{D} \subseteq \I$
forming a band of 10 pixels width around a disk of radius $r=10$. In
this domain we minimize the TSC energy and we use the remaining part
of the image as boundary condition. The corresponding data term in our
optimization problem~\eqref{eq:def-disc-problem} is given by
\begin{equation}\label{eq:data-term-inpainting}
  G(u) = \sum_{\i \in \mathcal{D}} \iota_{u^0_{\i}}(u_\i),
\end{equation}
where $\iota_C$ denotes the indicator function of the convex set
$C$. The proximal map for $G(u)$ is given by
\begin{equation}\label{eq:prox-inpainting}
  u = \mathrm{prox}_{\tau G}(v) \; \Longleftrightarrow \;
  u_\i =
  \begin{cases}
    v_\i   & \text{if } \i \in \mathcal{D}\\
    u^0_{\i} & \text{else}
  \end{cases},
  \; \forall \i \in \I.
\end{equation}
Figure~\ref{fig:approx-disk-a} shows the input image $(u^0_{\i})_{\i
  \in \I}$ and the inpainting domain $\mathcal{D}$ is indicated by the
gray area.

%K= 4: iter = 500000: tv = 60.10632, tac = 6.34558, W = 1.75038
%K= 8: iter = 500000: tv = 54.80439, tac = 6.28472, W = 0.89295
%K=16: iter = 500000: tv = 58.50410, tac = 6.28740, W = 0.70412
%K=32: iter = 500000: tv = 61.52567, tac = 6.28349, W = 0.64483
%K=64: iter = 500000: tv = 62.93360, tac = 6.28353, W = 0.62768
\begin{table}[ht!]
  \centering
  \begin{tabular}{c|ccc}
    \toprule  
    $N_\theta$ & $H_{\mathrm{TV}}$ $(2 \pi r\approx 62.8319)$ &
    $H_{\mathrm{AC}}$ $(2\pi \approx 6.2832)$ &
    $H_{\mathrm{SC}}$ $(2\pi/r \approx 0.6283)$\\
    \midrule
    4  & 60.1063 & 6.3456 & 1.7504\\
    8  & 54.8043 & 6.2847 & 0.8930\\
    16 & 58.5041 & 6.2874 & 0.7041\\
    32 & 61.5257 & 6.2835 & 0.6448\\
    64 & 62.9336 & 6.2835 & 0.6277\\
    \bottomrule
  \end{tabular}  
  \caption{Approximating a disk of radius $r=10$. The table shows the
    values of the computed total variation (TV), absolute curvature
    (AC), and squared curvature (SC) for a varying number of discrete
    orientations $N_\theta$. In parentheses we also give the values of
    the respective energies for the true solution. Observe that the
    squared curvature is approximated well only when using a quite
    large number of discrete orientations.}
  \label{tab:approx-disk}
\end{table}

In order to quantify the approximation quality of our discretization
for a varying number of discrete orientations, we report the total
variation (TV), the absolute curvature (AC) and the squared curvature
(SC). All three quantities are computed from the averaged field $\hat
\sigma = \mathcal{A}\sigma$ using the volume-centered based discrete
energy
\[
H(\hat\sigma) = \dx^2 \dt \sum_{\j \in \J} h(\hat \sigma_{\j}),
\]
where the function $h$ is one of the three instances:
%\begin{align*}
\[
  h_{\mathrm{TV}}(\hat \sigma_{\j})  = \sqrt{ (\hat \sigma^1_{\j})^2 + (\hat \sigma^2_{\j})^2}, \quad
  h_{\mathrm{AC}}(\hat \sigma_{\j})  =|\hat \sigma^\theta_{\j}|, \quad
  h_{\mathrm{SC}}(\hat \sigma_{\j})  = \frac{|\hat \sigma^\theta_{\j}|^2}{\sqrt{ (\hat \sigma^1_{\j})^2 + (\hat \sigma^2_{\j})^2}}.
\] %\end{align*}
Table~\ref{tab:approx-disk} details the values of the discrete
energies we obtained for different numbers of discrete orientations
$N_\theta$. From the results, one can see that a higher number of
discrete energies generally leads to a better approximation of the
true energy. This is particularly true for the value of the squared
curvature which seems to be well approximated only when using a quite
high number of discrete orientations. The absolute curvature, however
seems to be well approximated even when using only a small number of
discrete orientations. The reason for this is that the absolute
curvature of a smooth curve is easy to approximate by means of
a piecewise linear curve.

Figure~\ref{fig:disk-streams} visualizes the averaged Raviart-Thomas
vector field $\hat \sigma$ of the disk example in case of
$N_\theta=64$ discrete orientations. Observe that the vector field
nicely corresponds to the expected shape of a helix, shown in
Figure~\ref{fig:rtspace}. However, due to diffusive effects of our
numerical scheme, the vector field does not perfectly concentrate on a
one-dimensional structure.

\subsection{Non-smooth level sets}
\begin{figure}[htb]
  \centering \subfigure[Original
    image]{\includegraphics[width=0.32\textwidth]{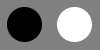}\label{fig:infinity-a}}\hfill
  \subfigure[Input image
    $u^0$]{\includegraphics[width=0.32\textwidth]{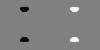}\label{fig:infinity-b}}\hfill
  \subfigure[Computed image
    $u$]{\includegraphics[width=0.32\textwidth]{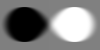}\label{fig:infinity-c}}\\ \subfigure[Visualization
    of the vector
    field]{\includegraphics[width=0.7\textwidth]{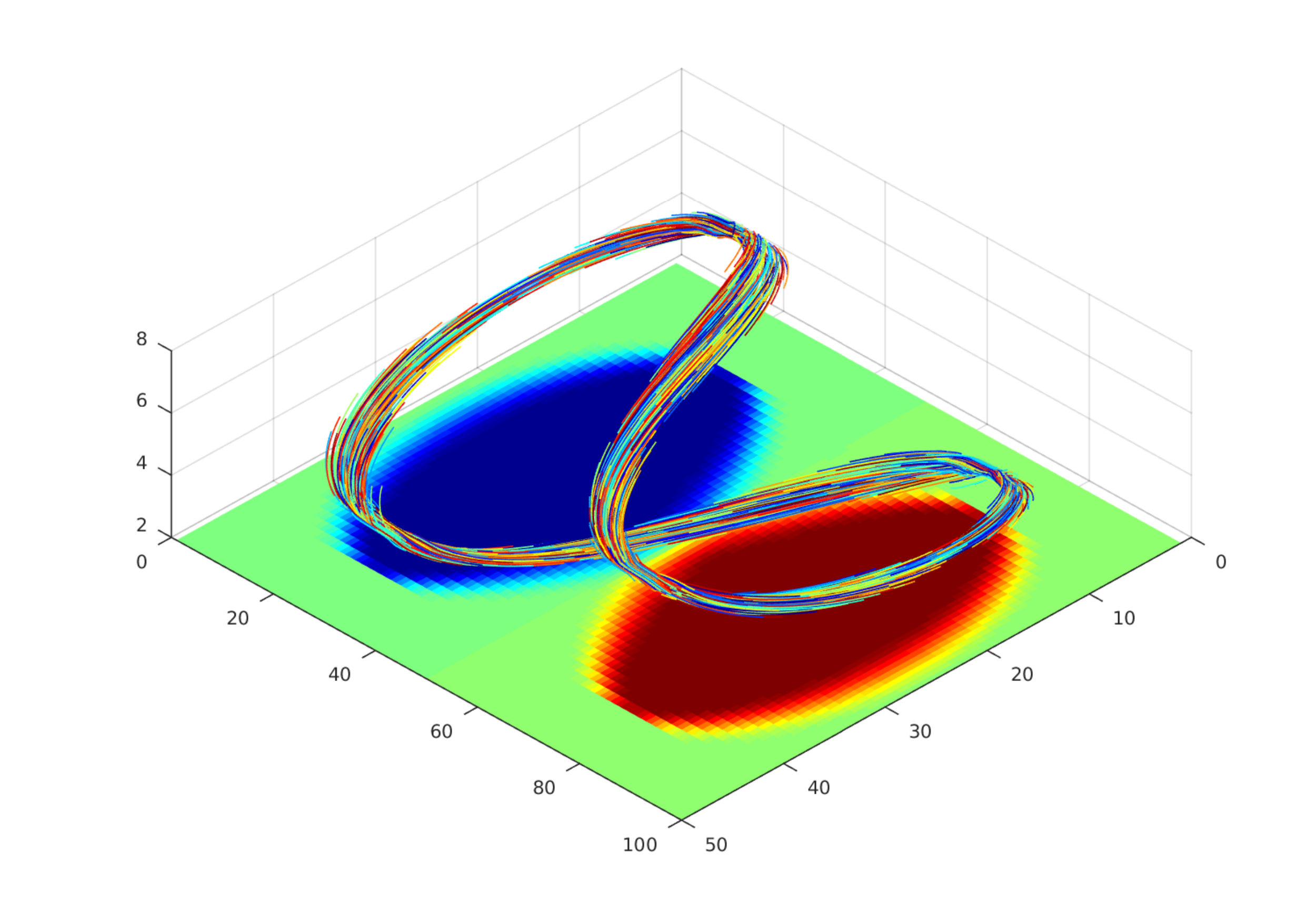}\label{fig:infinity-d}}
  \caption{Effect of convexifying curvature depending energies in the
    roto-translation space. (a) shows the original image, (b) shows
    the input image $u^0$ of size $100\times 50$ pixels, where gray
    pixels indicate the inpainting domain. (c) shows the computed
    solution of minimizing the TSC energy. (d) visualizes the vector
    field $\hat \sigma$ in the roto-translation space. Observe that
    the twisted $\infty$-shape curve skips the strong curvature at the
    cusp.}\label{fig:infinity}
\end{figure}

Here we demonstrate a typical effect of our convexification which
finds a low-energy solution for an image with non-smooth level sets
which should have infinite energy in more standard
relaxations~\cite{MasnouMorel06} of the Elastica
energy. Figure~\ref{fig:infinity-a} shows an image of size $N_1\times
N_2 = 100 \times 50$ pixels of a black and a white disk in front of a
gray background. Similarly to the inpainting problem of the previous
example, we fix only the four small parts of the original image (see
Figure~\ref{fig:infinity-b}) and minimize the TSC energy in the
inpainting domain $\mathcal{D}$, indicated by the gray area. In this
experiment we used $N_\theta=64$ discrete orientations and we set
$\alpha = 17$ to match the radius of the
disks. Figure~\ref{fig:infinity-c} shows the computed minimizer of the
TSC energy. Observe that the solution $u$ does not yield the expected
two disks of the original image but rather drop-like shapes that form
a sharp cusp in the middle of the image. Figure~\ref{fig:infinity-b}
shows a stream-line representation of the minimizing vector field
$\hat \sigma$ in the roto-translation space. Inspecting the field, one
can immediately see the reason for this behavior. The vector field
$\hat \sigma$ forms a twisted $\infty$-shape curve which ``skips'' the
strong curvature of the cusp.
%\clearpage

\subsection{Shape completion}

\begin{figure}[t!]
  \centering
  \subfigure[Input
    image]{\includegraphics[width=0.24\textwidth]{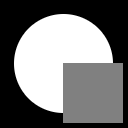}}\hfill
  \subfigure[TAC,
    $\alpha=15$]{\includegraphics[width=0.24\textwidth]{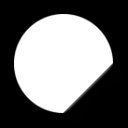}}\hfill
  \subfigure[TRV,
    $\alpha=15$]{\includegraphics[width=0.24\textwidth]{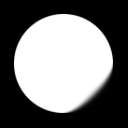}}\hfill
  \subfigure[TSC,
    $\alpha=50$]{\includegraphics[width=0.24\textwidth]{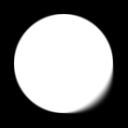}}\\ \subfigure[Input
    image]{\includegraphics[width=0.24\textwidth]{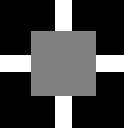}}\hfill
  \subfigure[TAC,
    $\alpha=15$]{\includegraphics[width=0.24\textwidth]{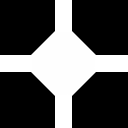}}\hfill
  \subfigure[TRV,
    $\alpha=15$]{\includegraphics[width=0.24\textwidth]{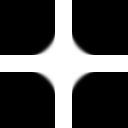}}\hfill
  \subfigure[TSC,
    $\alpha=10$]{\includegraphics[width=0.24\textwidth]{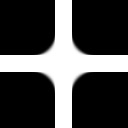}}\\ \subfigure[Input
    image]{\includegraphics[width=0.24\textwidth]{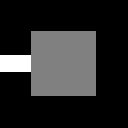}}\hfill
  \subfigure[TAC,
    $\alpha=15$]{\includegraphics[width=0.24\textwidth]{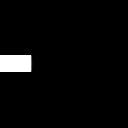}}\hfill
  \subfigure[TRV,
    $\alpha=15$]{\includegraphics[width=0.24\textwidth]{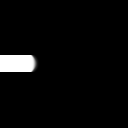}}\hfill
  \subfigure[TSC,
    $\alpha=10$]{\includegraphics[width=0.24\textwidth]{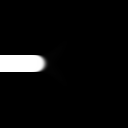}}\\
  \subfigure[Input images with rotations $0,\pi/8,\pi/4$]{
    \includegraphics[width=0.15\textwidth]{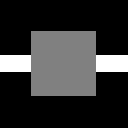}\hspace{0.5mm}
    \includegraphics[width=0.15\textwidth]{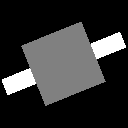}\hspace{0.5mm}
    \includegraphics[width=0.15\textwidth]{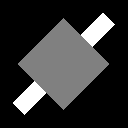}}\hfill
  \subfigure[TAC, $\alpha=15$]{
    \includegraphics[width=0.15\textwidth]{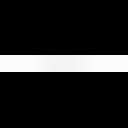}\hspace{0.5mm}
    \includegraphics[width=0.15\textwidth]{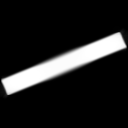}\hspace{0.5mm}
    \includegraphics[width=0.15\textwidth]{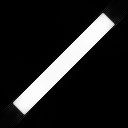}}
  \caption{Shape completion for various shapes using total absolute
    curvature (TAC), total roto-translational variation (TRV) and total squared
    curvature (TSC). The inpainting domain is indicated by the gray
    pixels. In the last row we provide results for completing straight
    lines at different angles.}\label{fig:shape-inpaint}
\end{figure}
\begin{figure}[t!]
  \centering
  \subfigure[Original shape]{\includegraphics[width=0.32\textwidth]{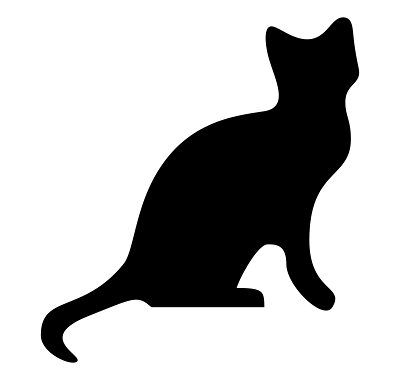}}\hfill
  \subfigure[Dipoles]{\includegraphics[width=0.32\textwidth]{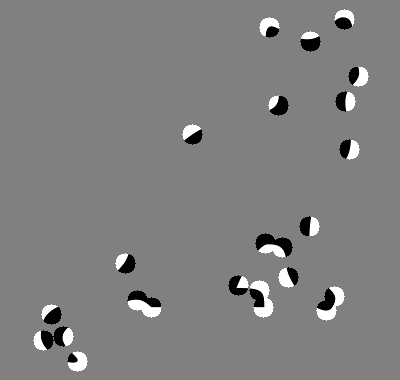}}\hfill
  \subfigure[TSC, $\alpha=50$]{\includegraphics[width=0.32\textwidth]{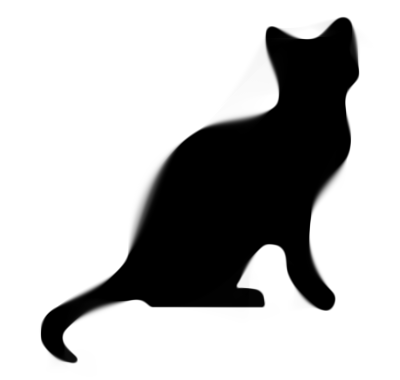}}
  \caption{Computation of ``Weickert's cat''~\cite{WeickertCat2012}:
    (a) shows the original shape, (b) depicts the given dipole data
    and (c) is the result of minimizing the TSC energy in the gray
    inpaiting domain.}\label{fig:weickerts-cat}
\end{figure}

In this section we provide some qualitative results on a number of
different shape completion problems. Similar to the previous two
examples, we define an inpainting domain $\mathcal{D}$ which is
indicated by the gray area and keep the remaining image as boundary
condition. Figure~\ref{fig:shape-inpaint} shows various input shapes
with their inpainting domains and the solutions of minimizing
different curvature energies using different settings of the parameter
$\alpha$. One can see that while minimizing the TAC energy usually
leads to straight connections and sharp corners, TSC leads to a smooth
continuation of the boundaries. The TRV energy leads to results which
are somewhere in between the results of TAC and TSC. In the last row of
Figure~\ref{fig:shape-inpaint}, we additionally demonstrate the
behavior for completing straight lines at different rotations by
minimizing the TAC energy. It turns out that our discretization scheme
performs quite well for rotations of $0, \pi/4, \pi/2, \ldots$ but
leads to more diffusive results for rotations of $\pi/8, 3\pi/8,
\ldots$. The development of a more isotropic scheme will be subject of
future research.

In Figure~\ref{fig:weickerts-cat}, we show the application to shape
completion from dipoles using the original data of ``Weickert's
cat''~\cite{WeickertCat2012}. The input image $u^0$ is of size
$N_1\times N_2 = 400 \times 380$ pixels and the inpainting domain
$\mathcal{D}$ is again indicated by the gray pixels. In this example
we used $N_\theta=64$ discrete orientations and the curvature
parameter was set to $\alpha=50$. In order to avoid relaxation
artifacts (self-intersections) inside the dipoles, we additionally use
zero-boundary conditions for the field $\sigma^{1,2}$ within the
constant areas of the dipoles. The results show that our numerical
scheme can successfully reconstruct the curvilinear shape of the cat
from only very little information given by the dipoles. However, we
can also observe %minor 
diffusion artifacts of our numerical scheme,
especially if the reconstructed boundaries are relatively
long. Sharper results for this kind of problems are usually obtained
using sophisticated anisotropic diffusion schemes such as the edge
enhancing diffusion (EED)~\cite{Weickert1996,Schmaltzetal2014}.
It would be interesting to understand whether these schemes also
% However, for these schemes it is still unknown if they 
minimize an underlying variational energy.

\subsection{Shape regularization}
\begin{figure}[t!]
  \centering
  \subfigure[Bull fight]{\includegraphics[width=0.325\textwidth]{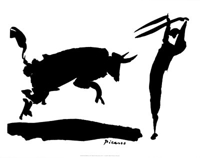}}\\
  \subfigure[TAC, $\lambda=8$]{\includegraphics[width=0.325\textwidth]{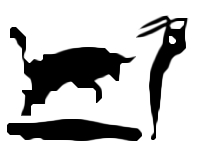}}
  \subfigure[TAC, $\lambda=4$]{\includegraphics[width=0.325\textwidth]{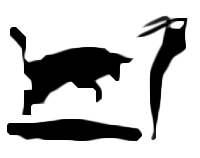}}
  \subfigure[TAC, $\lambda=2$]{\includegraphics[width=0.325\textwidth]{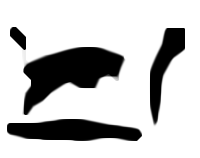}}\\
  \subfigure[TRV, $\lambda=8$]{\includegraphics[width=0.325\textwidth]{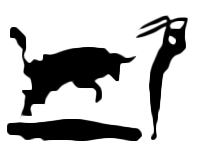}}
  \subfigure[TRV, $\lambda=4$]{\includegraphics[width=0.325\textwidth]{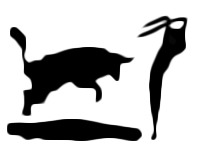}}
  \subfigure[TRV, $\lambda=2$]{\includegraphics[width=0.325\textwidth]{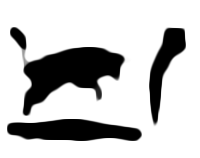}}\\
  \subfigure[TSC, $\lambda=8$]{\includegraphics[width=0.325\textwidth]{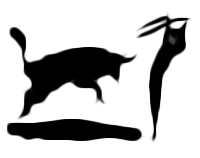}}
  \subfigure[TSC, $\lambda=4$]{\includegraphics[width=0.325\textwidth]{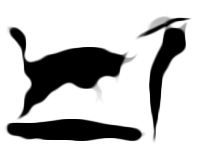}}
  \subfigure[TSC, $\lambda=2$]{\includegraphics[width=0.325\textwidth]{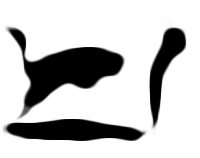}}
  \caption{Shape regularization using curvature regularization. (a)
    shows the original ``Bull fight'' image. (b)-(d) show the result
    of TAC regularization, (e-g) show the result of TRV regularization
    and (h)-(j) show the result of TSC
    regularization.}\label{fig:bull-fight}
\end{figure}

In our next experiment, we apply our curvature based energies for
shape regularization. Given an input image $(u^0_{\i})_{\i\in\I}$,
which is the characteristic function of a given shape, our aim is to
compute a simplified (or regularized) shape which is represented by
means of a (relaxed) binary image $(u_\i)_{\i\in\I}$. We make use of a
simple linear fidelity term which is frequently used in image
segmentation:
\[
G(u) = \sum_{\i \in \I} g_{\i}, \quad g_\i(u_{\i}) = u_{\i}w_\i +
\iota_{[0,1]}(u_\i),
\]
where $w \in \R^\I$ is a force field. For the application to shape
regularization we use $w = \lambda(\ha-u^0)$, where $\lambda >0$
defines the strength of the force field. The proximal map for this
data term is easily computed:
\[
u = \mathrm{prox}_{\tau G}(v) \; \Longleftrightarrow \; u_\i= \max(0, \min(1, v_\i - \tau w_\i)), \; \forall \i \in \I.
\]
In Figure~\ref{fig:bull-fight}, we apply TAC, TRV and TSC
regularization to regularize the shape of Picasso's ``Bull fight''
image. In all three cases, we set $\alpha=10$ and we use different
settings of the parameter $\lambda$ to obtain gradually simplified
shapes. From the results one can see that TAC regularization yields
shapes with relatively straight boundaries and sharp corners. TRV
regularization yields smooth shapes but also allows for sharp corners.
TSC regularization yields smooth shapes. Observe that whenever it
seems energetically preferable, the solution of TSC produces sharp
cusps with ``hidden'' edges to bypass locations of strong
curvature, as predicted by the theory~\cite{Bellettini-MS-1993}.
This effect is usually less visible when minimizing the TAC
or TRV energies.

\subsection{Image inpainting}

\begin{figure}[t!]
  \centering
  \subfigure[Einstein]{\includegraphics[width=0.325\textwidth]{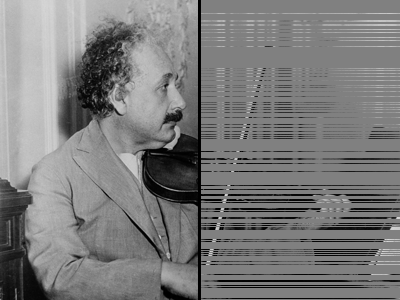}}\hfill
  \subfigure[TSC, $\alpha=10$]{\includegraphics[width=0.325\textwidth]{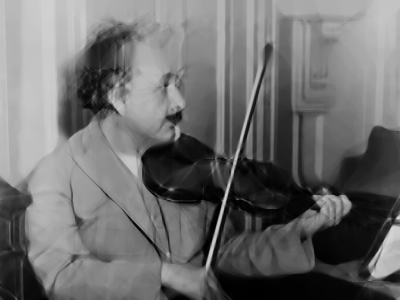}}\hfill
  \subfigure[TV, $\alpha=0$]{\includegraphics[width=0.325\textwidth]{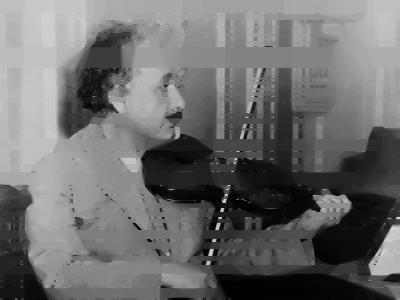}}\\
  \subfigure[Picasso]{\includegraphics[width=0.325\textwidth]{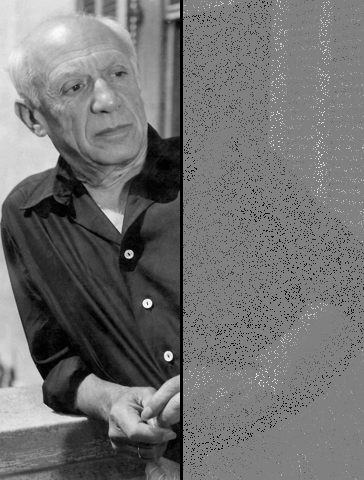}}\hfill
  \subfigure[TSC, $\alpha=5$]{\includegraphics[width=0.325\textwidth]{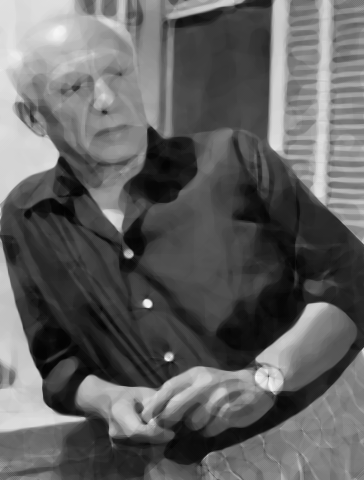}}\hfill
  \subfigure[TV, $\alpha=0$]{\includegraphics[width=0.325\textwidth]{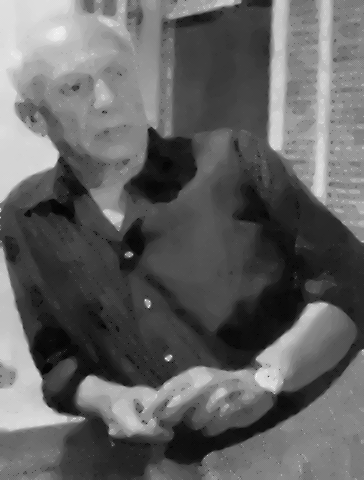}}\hfill
  \caption{Image inpainting using the proposed curvature-based
    energies. (a) shows a blend of the original ``Einstein'' image
    with its degraded version where we have removed $80\%$ of the
    lines. (d) shows a blend of the ``Picasso'' with the version where
    we have removed $90\%$ of the pixels. (b) is the inpainting result
    of the ``Einstein'' and (e) is the inpainting result of the
    ``Picasso'' image. For comparison, (c) and (f) show the result of
    standard TV regularization. Observe that our proposed curvature
    based regularization leads to significant better inpainting
    results.}\label{fig:inpainting-einstein-pablo}
\end{figure}

In this section, we apply our proposed curvature energies to the
classical problem of image inpainting. Similar to the previous
examples, we use the data term~\eqref{eq:data-term-inpainting} with
the only difference that the input image $u^0$ is now a gray level
image. In both examples we used $N_\theta=32$ discrete orientations
and the curvature parameter was set to
$\alpha=15$. Figure~\ref{fig:inpainting-einstein-pablo} shows the
results for two different inpainting problems. In the ``Einstein''
image, we randomly remove $80\%$ lines and in the ``Picasso'' image we
randomly remove $90\%$ of the pixels. In case of the ``Einstein''
image, we use TSC regularization with $\alpha=10$ and for the
``Picasso'' we set $\alpha=5$. In case of the ``Einstein'' image, the
gaps a much larger and hence we use a larger value of $\alpha$. For
comparison, we also provide results of standard TV regularization,
which is equivalent to using $\alpha=0$ in one of the three curvature
energies. From the results one can clearly see that curvature
regularization leads to significantly better inpainting results. On
the downside, we can also observe some  artifacts which are
caused by our convex representation in the roto-translation space.

\subsection{Image denoising}

\begin{figure}[t!]
  \centering
  \subfigure[Louvre]{\includegraphics[width=0.325\textwidth]{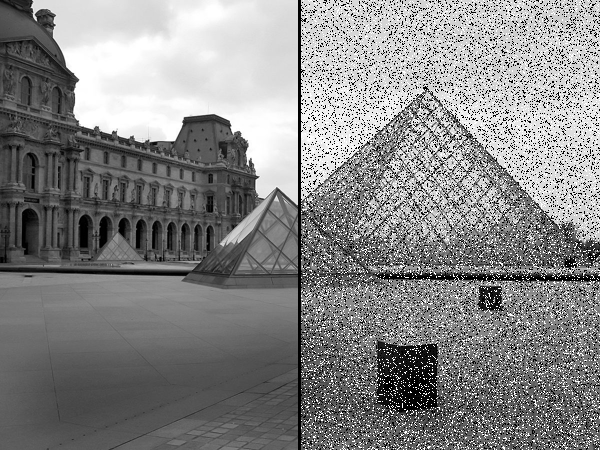}}\hfill
  \subfigure[TSC, $\alpha=10$, $\lambda=7$]{\includegraphics[width=0.325\textwidth]{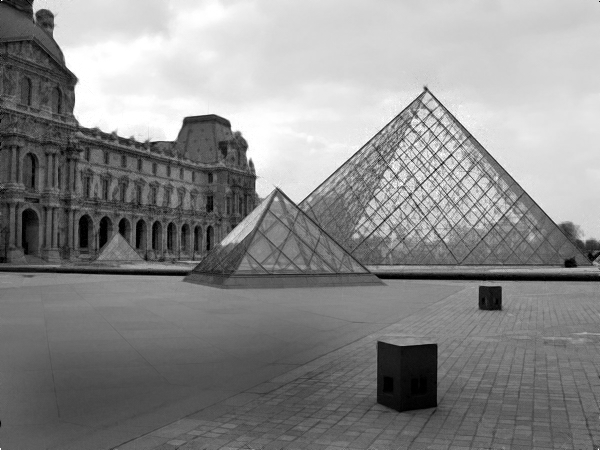}}\hfill
  \subfigure[TV, $\alpha = 0$, $\lambda=2$]{\includegraphics[width=0.325\textwidth]{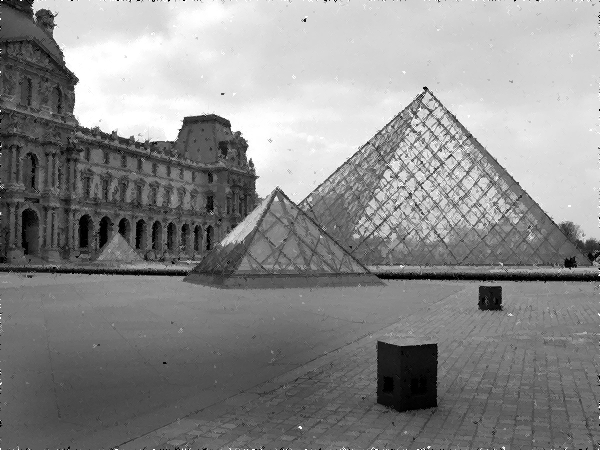}}\\
  \subfigure[Leberbl\"umchen]{\includegraphics[width=0.325\textwidth]{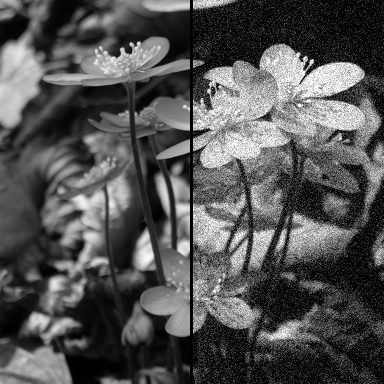}}\hfill
  \subfigure[TSC, $\alpha=10$, $\lambda=40$]{\includegraphics[width=0.325\textwidth]{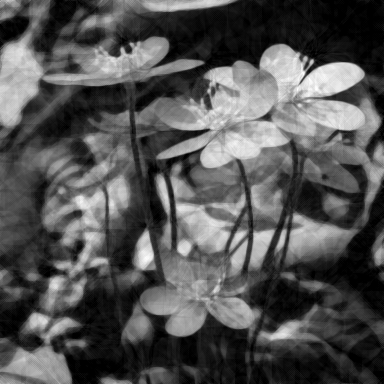}}\hfill
  \subfigure[TV, $\alpha = 0$, $\lambda=10$]{\includegraphics[width=0.325\textwidth]{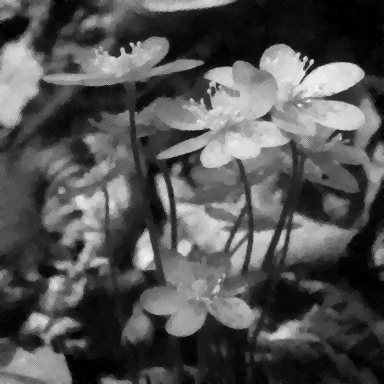}}\\
  \caption{Image denoising using curvature-based regularization. (a)
    shows the original ``Louvre'' image blended with a noisy version,
    where we have added $25\%$ ``salt \& pepper'' noise. (d) shows the
    clean ``Leberbl\"umchen'' image together with its noisy variant
    where we have added zero-mean Gaussian noise with standard
    deviation $0.1$ (b) and (e) show the result of image denoising
    using TSC regularization. For comparison we show in (c) and (f)
    the result when using classical total variation (TV)
    regularization.  }\label{fig:denoising-louvre-leberbluemchen}
\end{figure}

In our last experiments we investigate our curvature energies for the
classical problem of image denoising. We investigate two different
types of noise: Zero-mean Gaussian noise and impulse noise such as
``salt \& pepper'' noise. For Gaussian noise, it is well-known data a
quadratic $\ell_2$ data term
\[
G(u) = \frac{\lambda}{2} \norm{u-f}^2,
\]
where $\lambda > 0$ is a good data fidelity parameter. The proximal map is
given by
\[
u = \text{prox}_{\tau G}(v) \; \Longleftrightarrow \; u_\i = \frac{v_i
  + \tau\lambda f_\i}{1+\tau\lambda}, \; \forall \i \in \I.
\]
In case of impulse noise, a $\ell_1$ data term is more suitable:
\[
G(u) = \lambda \norm[1]{u-f},
\]
since it is more robust with respect to outliers. The proximal map for
the $\ell_1$ data term is given by the classical soft-shrinkage
formula
\[
u = \text{prox}_{\tau G}(v) \; \Longleftrightarrow \; u_\i = f_\i +
\max(0, |v_\i-f_\i|-\tau\lambda)\cdot\text{sgn}(v_\i-f_\i), \; \forall \i \in \I.
\]
In Figure~\ref{fig:denoising-louvre-leberbluemchen} we show the
results of our curvature-based regularization energies for image
denoising. In case of the ``Louvre'' image we generated the noisy
image by adding $25\%$ ``salt \& pepper'' noise. The noisy
``Leberbl\"umchen'' image was generated by adding zero-mean Gaussian
noise with standard deviation $0.1$. In case of Gaussian noise we use
a $\ell_2$ data term and in case of ``salt \& pepper'' noise we used
the $\ell_1$ fidelity term. In both cases, we use TSC regularization
and the curvature parameter was set to $\alpha=10$. For comparison, we
also show the results of standard total variation (TV) denoising,
which is obtained from our models by setting $\alpha=0$. One can
clearly see that TSC regularization leads to a better preservation of
image edges than TV regularization, in particular at small and
elongated structures. This effect is more visible in case of ``salt \&
pepper'' noise since the $\ell_1$ data term leads to a behavior
similar to image inpainting, once an outlier is detected.

\section{Proof of Theorem~\ref{th:C2}}\label{sec:proof}

This section and the following are devoted to the proof of
Theorem~\ref{th:C2}. We first show a preliminary result which shows
that admissible curves (of finite energy) in $\OS$ project onto curves
in $\Om$ with bounded energy as well.

\subsection{Control of curves}

Let $\gamma:[0,L]\to \OS$ a rectifiable curve of length $L$ in
$\OS$, parameterized by its length ($|\dot{\gamma}|=1$ a.e.).
We define its energy as
\[
\E(\gamma):=\int_0^1 h(\gamma(s),\dot{\gamma(s)}) ds
\]
and assume that it is finite, which in particular yields
that for a.e.~$s$, 
\[\dot{\gamma}^x(s)=|\dot{\gamma}^x(s)|
(\cos\gamma^\theta(s),\sin \gamma^\theta(s))^T.\]

We consider the projected curve $\gamma^x:[0,L]\to \Om$
(possibly overlapping even if $\gamma$ is simple)
and wish to show that the energy $\E$ controls an energy on
this curve. First we reparameterize (as usual) the rectifiable
curve $\gamma^x$ by its length.
For this we introduce the length, for $s\in [0,L]$
\[
\ell(s) := \int_0^s |\dot{\gamma}^x(t)|dt \in [0,L],
\]
which is a $1$-Lipschitz, nondecreasing function.
Since clearly for $s'>s$ one has $\ell(s)=\ell(s')$ if and only
if $\dot{\gamma}^x(t)=0$ a.e.~in $[s,s']$, we also find that
$\gamma^x(t)=\gamma^x(s)$ for all $t\in[s,s']$. This means that
(obviously) one can reparameterize $\gamma^x$ by defining a
curve $\tilde{\gamma}:[0,\ell(L)]\to \Omega$ such that
$\tilde{\gamma}(\ell(s))= \gamma^x(s)$ for all~$s\in[0,L]$.
By definition it is clear that $\tilde{\gamma}$ is $1$-Lipschitz,
moreover if $l=\ell(s)$ and $l'=\ell(s')>l$,
\[
\frac{\tilde{\gamma}(l')-\tilde{\gamma}(l)}{l'-l}
=\frac{ \int_s^{s'}\dot{\gamma}^x(t)dt}{ \int_s^{s'}|\dot{\gamma}^x(t)|dt}
\stackrel{l'\to l}\to \frac{\dot{\gamma}^x(s)}{|\dot{\gamma}^x|(s)}=\gamma^\theta(s)
\]
if $s$ is a Lebesgue point of $\dot{\gamma}^x$ where $\dot{\gamma}^x(s)\neq 0$.
One deduces that for a.e.~$l\in [0,\ell(L)]$, $\tilde{\gamma}$ has
the tangent vector
$\tau(l)=(\cos\theta(l),\sin\theta(l))^T$ given by
\[
\tau(l)=\dot{\tilde{\gamma}}(l) = 
\begin{pmatrix} \cos \gamma^\theta(s) \\
\cos \gamma^\theta(s)
\end{pmatrix}
\]
where $l=\ell(s)$.
Observe that for any smooth $\psi:(0,\ell(L))\to\R$
with compact support one has
\begin{multline*}
\int_0^{\ell(L)} \dot{\psi}(l)\theta(l)dl
= \int_0^L \dot{\psi}(\ell(s))\theta(\ell(s)) \dot{\ell}(s)ds
\\
= \int_0^L \gamma^\theta(s) (\psi\circ \ell)'(s) ds 
= -\int_0^L \dot{\gamma}^\theta(s) \psi(\ell(s)) ds\le L\|\psi\|_{C^0}
\end{multline*}
which shows that $\theta(l)$ is a $BV$ function, or equivalently that
$\tilde{\gamma}$ has a curvature $\kappa=\dot{\theta}$
which is a bounded measure; in particular
taking the supremum over $\psi$ with $|\psi|\le 1$ one finds
\[
\int_0^{\ell(L)}d|\kappa|(l) \le \int_0^L |\dot{\gamma}^\theta(s)|ds
\le L.
\]
In addition, it follows, given $\psi\in C_c^\infty(\ell(a),\ell(b))$ with
$0\le a< b \le L$ that:
\begin{multline*}
\int_{\ell(a)}^{\ell(b)} \big(-\dot{\psi}(l)\theta(l)-f^*(\psi(l))\big)dl
=
 \int_a^b \big(-(\psi\circ\ell)'(s)\gamma^\theta(s)  - f^*(\psi(\ell(s)))|\dot{\gamma}^x(s)|\big)ds
\\=
 \int_a^b \big(\psi(\ell(s))\dot{\gamma}^\theta(s)  - f^*(\psi(\ell(s)))|\dot{\gamma}^x(s)|\big)ds
\le \int_a^b h(\gamma(s),\dot{\gamma}(s))ds
\end{multline*}
%for any $\psi\in C^0(0,\ell(L))$, 
and taking the supremum one deduces that
\begin{equation*}%\label{eq:cont1}
\int_{\ell(a)}^{\ell(b)} f(\kappa) \le
\int_a^b  h(\gamma(s),\dot{\gamma}(s))ds
\end{equation*}
(where the left-hand side integral in on the open interval and denotes
a convex function of a measure).
% In addition, it follows, if $\vp\in C^0(\Om;\R_+)$ is a bounded, continuous
% and nonnegative function, that
% \begin{multline*}
% \int_0^{\ell(L)} \vp(\tilde{\gamma}(l))\big(\dot{\psi}(l)\theta(l)-f^*(\psi(l))\big)dl
% \\=
%  -\int_0^L \vp(\gamma^x(s))\big(\dot{\gamma}^\theta(s) \psi(\ell(s)) - f^*(\psi(\ell(s)))|\dot{\gamma}^x(s)|\big)ds
% \\\le \int_0^L \vp(\gamma^x(s))h(\gamma(s),\dot{\gamma}(s))ds
% \end{multline*}
% for any $\psi\in C^0(0,\ell(L))$, and taking the supremum
% one deduces that
 Now, if $\vp\in C^0(\Om;\R_+)$ is a bounded, continuous
 and nonnegative function, one deduces that
\begin{multline*}
\int_0^{\ell(L)} \vp(\tilde{\gamma}(l))f(\kappa) 
= \int_0^\infty \left(\int_{\{l:\vp(\tilde{\gamma}(l))>t\}} f(\kappa) \right)dt
\\
\le \int_0^\infty \left(\int_{\{s:\vp(\gamma^x(s))>t\}} h(\gamma(s),\dot{\gamma}(s)) \right)dt
\end{multline*}
and it follows
\begin{equation}\label{eq:cont1}
\int_0^{\ell(L)} \vp(\tilde{\gamma}(l))f(\kappa) \le
\int_0^L \vp(\gamma^x(s)) h(\gamma(s),\dot{\gamma}(s))ds.
\end{equation}

We now show the following lemma:
\begin{lemma}\label{lem:important}
Let $\Gamma\subset \Om$ be a $C^2$ (oriented) curve with
tangent $\tau_\Gamma$ and curvature $\kappa_\Gamma$, ane let
and $\gamma:[0,L]\to \OS$ be a rectifiable curve.
Define
$\Gamma^+=\{x\in\Gamma: \exists s\in(0,L), \gamma^x(s)=x \textup{ and }
 \dot{\gamma}^x(s)\cdot\tau_\Gamma\ge 0\}$.
 Then for any bounded, nonnegative, continuous function $\vp\in C^0(\Om;\R_+)$,
\begin{equation}\label{eq:boundimportant}
\int_{\Gamma^+} \vp(x) f(\kappa_\Gamma) d\H^1 \le 
\int_0^L \vp(\gamma^x(s)) h(\gamma(s),\dot{\gamma}(s))ds.
\end{equation}
\end{lemma}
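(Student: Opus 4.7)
The plan is to combine the estimate~\eqref{eq:cont1}, already established for the reparameterized projection $\tilde{\gamma}:[0,\ell(L)]\to\Om$, with a one-dimensional area formula comparing $\tilde{\gamma}$ to $\Gamma$ at points where the two agree tangentially. Specifically, it suffices to prove
\[
\int_{\Gamma^+} \vp(x) f(\kappa_\Gamma(x))\, d\H^1(x) \le \int_0^{\ell(L)} \vp(\tilde{\gamma}(l)) f(\kappa),
\]
where $\kappa=\dot\theta$ is the (bounded) curvature measure of the arc-length Lipschitz curve $\tilde{\gamma}$; chaining with~\eqref{eq:cont1} then yields~\eqref{eq:boundimportant}.

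First I introduce the ``positive contact'' set
\[
E = \{l \in (0,\ell(L)) : \tilde{\gamma} \text{ differentiable at } l,\ \tilde{\gamma}(l)\in \Gamma,\ \dot{\tilde{\gamma}}(l)\cdot\tau_\Gamma(\tilde{\gamma}(l))\ge 0\}.
\]
At every Lebesgue density point $l_0$ of $E$, sequences $l_n\to l_0$ with $l_n\in E$ give difference quotients that are chords of the $C^2$ curve $\Gamma$; parameterizing $\Gamma$ by arc length shows that any subsequential limit of $(\tilde{\gamma}(l_n)-\tilde{\gamma}(l_0))/(l_n-l_0)$ is parallel to $\tau_\Gamma(\tilde{\gamma}(l_0))$. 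Combined with $|\dot{\tilde{\gamma}}|=1$ and the sign condition, this forces the strict identity $\dot{\tilde{\gamma}}(l_0)=\tau_\Gamma(\tilde{\gamma}(l_0))$. A further density/differentiation argument (using that $\tau_\Gamma$ is $C^1$ along $\Gamma$) then gives that the absolutely continuous density $\kappa_a$ of the measure $\kappa$ satisfies $\kappa_a(l) = \kappa_\Gamma(\tilde{\gamma}(l))$ for a.e.\ $l\in E$.

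Next I apply the one-dimensional area formula to the $1$-Lipschitz map $\tilde{\gamma}$ restricted to $E$:
\[
\int_E \vp(\tilde{\gamma}(l)) f(\kappa_\Gamma(\tilde{\gamma}(l)))\,dl = \int_\Gamma \vp(x) f(\kappa_\Gamma(x)) N(x)\, d\H^1(x),
\]
where $N(x)=\#(E\cap\tilde{\gamma}^{-1}(x))$. For $\H^1$-a.e.\ $x\in\Gamma^+$ one has $N(x)\ge 1$: pick $s\in(0,L)$ with $\gamma^x(s)=x$ and $\dot\gamma^x(s)\cdot\tau_\Gamma\ge 0$; discarding the $\H^1$-null image of $\{s:\dot\gamma^x(s)=0\}$ (via the area formula applied to $\gamma^x$), we may assume $\dot\gamma^x(s)\ne 0$, in which case $l=\ell(s)\in E$ with $\tilde{\gamma}(l)=x$. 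Since $f\ge \gamma\sqrt{1+t^2}>0$ and $f^\infty\ge 0$, dropping both the singular part of $\kappa$ and the complement of $E$ yields $\int_E \vp(\tilde{\gamma}) f(\kappa_a)\,dl \le \int_0^{\ell(L)} \vp(\tilde{\gamma}) f(\kappa)$, so that
\[
\int_{\Gamma^+} \vp f(\kappa_\Gamma)\, d\H^1 \le \int_\Gamma \vp f(\kappa_\Gamma) N\,d\H^1 = \int_E \vp f(\kappa_\Gamma\circ\tilde{\gamma})\,dl \le \int_0^{\ell(L)} \vp(\tilde{\gamma}) f(\kappa).
\]

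The main obstacle is this curvature identification step: establishing that the curvature measure of the merely Lipschitz projection $\tilde{\gamma}$, whose tangent is only $BV$, has absolutely continuous density exactly $\kappa_\Gamma\circ\tilde{\gamma}$ on the contact set $E$. It requires pairing the $C^2$ regularity of $\Gamma$ with Lebesgue-density arguments, so that the tangent alignment $\dot{\tilde{\gamma}}=\tau_\Gamma\circ\tilde{\gamma}$ propagates to the curvature level, despite $\tilde{\gamma}$ having no \textit{a priori} second-order regularity.
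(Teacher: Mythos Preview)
Your proposal is correct and follows essentially the same route as the paper: reduce via~\eqref{eq:cont1} to the projected curve $\tilde\gamma$, show at Lebesgue density points of the contact set that $\dot{\tilde\gamma}=\tau_\Gamma\circ\tilde\gamma$ and then $\kappa_a=\kappa_\Gamma\circ\tilde\gamma$, drop the singular part of $\kappa$, and conclude. Your use of the area formula with multiplicity $N(x)\ge 1$ makes explicit the final step that the paper leaves as ``easily follows''; conversely, the paper spells out the curvature-identification step (via approximate differentiability of $\theta$ and a Taylor expansion of $\theta_\Gamma$ along $\Gamma$) that you correctly flag as the main obstacle and only sketch.
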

\begin{proof}
We just need to show that, defining the measure $\kappa$ as before,
\[
\int_{\Gamma^+} \vp(x)f(\kappa_\Gamma) d\H^1 \le 
\int_0^{\ell(L)} \vp(\tilde{\gamma})f(\kappa),
\]
and the conclusion will follow from~\eqref{eq:cont1}.

A first observation is that as $\tau(l)$ defined above is $BV$,
it has at most a countable number of jumps and
one can cover (up to the jump points) $[0,\ell(L)]$
with an at most countable
union of intervals $(a_i,b_i)_{i\in I}$  on which $\tau(l)$ is continuous
(hence $\tilde{\gamma}$ is a $C^1$ curve on $(a_i,b_i)$). Moreover,
possibly dropping further a finite number of points, one may assume
that $|\kappa|(a_i,b_i)<\pi$ so that $\gamma^x(a_i,b_i)$ is
a simple curve.

Assume $|\{l\in (a_i,b_i)\,:\, \tilde{\gamma}(l)\in\Gamma,
\dot{\tilde{\gamma}}(l)\cdot\tau_\Gamma\ge 0\}|>0$. Choose such
an $l\in(a_i,b_i)$, with $x=\tilde{\gamma}(l)\in\Gamma^+$, and such
that $\tilde{\gamma}$ is differentiable in $l$.
We assume in addition that $x$ is a point of ($\H^1$-)density
one in $\tilde{\gamma}(a_i,b_i)\cap \Gamma^+$,
that $l$ is a Lebesgue point of $\kappa^a$,
the absolutely continuous part (w.r.~the Lebesgue measure) of
$\kappa$, and that $\theta(l)$ is approximately differentiable
at $l$: for any $\eta>0$,
\[
\lim_{\e\to 0}\frac{1}{2\e}\left|
\left\{ s\in (l-\e,l+\e):
\frac{|\theta(s)-\theta(l)-\kappa^a(l)(s-l)|}{|s-l|}ds > \eta \right\}\right|
=0.
\]
% \[
% \lim_{\e\to 0} \frac{1}{2\e}\int_{l-\e}^{l+\e} \frac{|\theta(s)-\theta(l)-\kappa^a(l)(s-l)|}{|s-l|}ds = 0. %% check this!! [a bit sloppy as $\theta\in\Sp^1$]
% \]
All this is true $\H^1$-a.e.~in $\tilde{\gamma}(a_i,b_i)\cap \Gamma^+$,
so there is no loss of generality, see for instance~\cite{AmbrosioFuscoPallara}.

If $\dot{\tilde{\gamma}}(l)\neq \tau_{\Gamma}(x)$,
then $x$ must be an isolated point
in $\tilde{\gamma}(a_i,b_i)\cap\Gamma^+$, a contradiction.
Hence $\dot{\tilde{\gamma}}(l)= \tau_{\Gamma}(x)$. In the same
way, one has that for a.e.~$s$ near $l$ such that
$\tilde{\gamma}(s)\in\Gamma$, $\dot{\tilde{\gamma}}(s)=
 \tau_{\Gamma}(\tilde{\gamma}(s))$. For such $s$,
one therefore has that
\[
\frac{|\theta(s)-\theta(l)-\kappa^a(l)(s-l)|}{|s-l|}
= 
\frac{|\theta_\Gamma(\tilde{\gamma}(s))-\theta_\Gamma(x)-\kappa^a(l)(s-l)|}{|s-l|},
\]
where $\theta_\Gamma$ is the angle between
{\scriptsize$\begin{pmatrix}1\\0\end{pmatrix}$
and $\tau_\Gamma$}.
Using that for such $s$,
\begin{multline*}
\theta_\Gamma(\tilde{\gamma}(s))-\theta_\Gamma(x)
= \kappa_\Gamma(x)(\tilde{\gamma}(s)-x)\cdot\tau_\Gamma(x)
+ o(|\tilde{\gamma}(s)-x|) 
\\ = 
\kappa_\Gamma(x)\dot{\tilde{\gamma}}(l)\cdot\tau_\Gamma(x)(s-l)
+ o(|\dot{\tilde{\gamma}}(s)||s-l|)
\end{multline*}
we obtain that for such $s$,
\[
\frac{|\theta(s)-\theta(l)-\kappa^a(s-l)|}{|s-l|}
= |\kappa_\Gamma(x)-\kappa^a(l)| + o(1)
\]
(where $o(1)$ goes to $0$ as $\e\to 0$).
Hence, for $\e>0$ small and $\eta>0$,
\begin{multline*}
\left\{
s\in (l-\e,l+\e):
\frac{|\theta(s)-\theta(l)-\kappa^a(l)(s-l)|}{|s-l|}ds > \eta
\right\}
\\ \supseteq 
\left\{
s\in (l-\e,l+\e): \tilde{\gamma}(s)\in\Gamma
\textup{ and }|\kappa_\Gamma(x)-\kappa^a(l)| + o(1) > \eta
\right\}
\end{multline*}
% \begin{multline*}
% \frac{1}{2\e}\int_{l-\e}^{l+\e} \frac{|\theta(s)-\theta(l)-\kappa^a(l)(s-l)|}{|s-l|}ds
% \\
% \ge \frac{|\{s\in (l-\e,l+\e): \tilde{\gamma}(s)\in\Gamma\}|}{2\e}( |\kappa_\Gamma(x)-\kappa^a(l)| + o(1)).
% \end{multline*}
In the limit, using that $l$ is a point of density one in
the set $\{\tilde{\gamma}(\cdot)\in\Gamma\}$, we obtain that
$|\kappa_\Gamma(x)-\kappa^a(l)|\le \eta$ and since $\eta$ is arbitrary,
$\kappa_\Gamma(x)=\kappa^a(l)$.
Since the intervals $(a_i,b_i)$ cover $\H^1$-almost all~of $[0,\ell(L)]$,
we find that this equality holds $\H^1$-a.e.~in $\Gamma^+$.

The thesis of the Lemma easily follows, since
\[
\int_0^{\ell(L)}  \vp(\tilde{\gamma})f(\kappa)\ge 
\int_0^{\ell(L)}  \vp(\tilde{\gamma})f(\kappa^a)dx
%\ge \sum_{i\in I} \int_{a_i}^{b_i}  \vp(\tilde{\gamma}) f(\kappa^a)dx.
\]
\end{proof}
\subsection{Decomposition of $\sigma$}

Let $u\in BV(\Om)$ with $F(u)<\infty$.
We observe that we can decompose an admissible $\sigma$
(with in particular $\Div\sigma=0$) as follows:
\begin{equation}\label{eq:totaldecomposition}
\sigma = \int_{\M} \lambda d\mu(\lambda),\quad
|\sigma| = \int_{\M} |\lambda| d\mu(\lambda),\quad
\end{equation}
where $\mu$ is a nonnegative measure on the set of charges $\M$
which is here a shorthand notation for $\M(\OS;\R^3)$:
%with zero divergence
see Appendix~\ref{app:Smirnov} which discusses the results of
Smirnov in~\cite{Smirnov}.

\begin{lemma}\label{lem:totdecg}
 Assume~\eqref{eq:totaldecomposition} holds.
Then for any $g(\xi,p)$ nonnegative, continuous, convex and one-homogeneous
in its second argument,
\begin{equation}\label{eq:totdecg}
\int_{\OS} g(\xi,\sigma) = \int_{\M}\left(\int_{\OS} g(\xi,\lambda)\right)d\mu(\lambda).
\end{equation}
\end{lemma}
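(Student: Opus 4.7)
The strategy is to reduce the lemma to a pointwise identification of the polar (Radon–Nikodým) densities of $\sigma$ and of the measures $\lambda$. Let $\sigma=\theta|\sigma|$ denote the polar decomposition, with $|\theta|=1$ $|\sigma|$-a.e., and for $\mu$-a.e.\ $\lambda$ let $\lambda=\theta_\lambda|\lambda|$ with $|\theta_\lambda|=1$ $|\lambda|$-a.e. The hypothesis $|\sigma|=\int_\M |\lambda|d\mu(\lambda)$ ensures that for $\mu$-a.e.\ $\lambda$ one has $|\lambda|\ll|\sigma|$, so that $\theta$ may be chosen as a common Borel representative defined $|\lambda|$-a.e.\ for $\mu$-a.e.\ $\lambda$.

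The main step is to show that $\theta_\lambda(\xi)=\theta(\xi)$ for $|\lambda|$-a.e.\ $\xi$ and $\mu$-a.e.\ $\lambda$. To this end I would test the identity $\sigma=\int_\M \lambda\, d\mu(\lambda)$ against the bounded Borel function $\theta$ (approximating by continuous sections, or using the measurability structure from the Smirnov decomposition discussed in Appendix~\ref{app:Smirnov}) to obtain
\[
|\sigma|(\OS)\;=\;\int_{\OS}\theta\cdot d\sigma\;=\;\int_\M\int_{\OS}\theta(\xi)\cdot\theta_\lambda(\xi)\,d|\lambda|(\xi)\,d\mu(\lambda).
\]
On the other hand, the total variation hypothesis gives $|\sigma|(\OS)=\int_\M\int_{\OS}1\,d|\lambda|\,d\mu(\lambda)$. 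Subtracting,
\[
\int_\M\int_{\OS}\bigl(1-\theta(\xi)\cdot\theta_\lambda(\xi)\bigr)\,d|\lambda|(\xi)\,d\mu(\lambda)\;=\;0,
\]
and since $|\theta|=|\theta_\lambda|=1$ the integrand is nonnegative by Cauchy–Schwarz; equality $\theta\cdot\theta_\lambda=1$ of two unit vectors forces $\theta_\lambda=\theta$.

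With this identification, the conclusion is immediate from the one-homogeneity of $g$ in its second argument and Fubini: writing $\int_{\OS}g(\xi,\sigma)=\int_{\OS}g(\xi,\theta(\xi))\,d|\sigma|(\xi)$ and using $|\sigma|=\int_\M|\lambda|d\mu$,
\[
\int_{\OS}g(\xi,\sigma)=\int_\M\int_{\OS}g(\xi,\theta(\xi))\,d|\lambda|\,d\mu=\int_\M\int_{\OS}g(\xi,\theta_\lambda(\xi))\,d|\lambda|\,d\mu=\int_\M\Bigl(\int_{\OS}g(\xi,\lambda)\Bigr)d\mu(\lambda).
\]
I expect the only genuine difficulty to be the measure-theoretic legitimacy of pairing $\sigma=\int\lambda d\mu$ with the Borel function $\theta$ (and the symmetric Fubini step), which should follow either by approximating $\theta$ in $L^1(|\sigma|)$ by continuous sections of the unit ball or by invoking the measurable selection/parameterization already implicit in the Smirnov-type decomposition cited in the appendix.
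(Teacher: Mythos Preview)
Your argument is correct and reaches the same core conclusion as the paper --- that the polar density $\lambda/|\lambda|$ coincides with $\sigma/|\sigma|$ for $\mu$-a.e.\ $\lambda$ --- but the route is different. The paper never tests the decomposition against the Borel function $\theta=\sigma/|\sigma|$. Instead it constructs explicit \emph{continuous} approximations $f_{\rho,\e}(\xi)=\sigma(\overline B(\xi,\rho))/|\sigma|(B(\xi,\rho+\e))$ of $\theta$; since each $f_{\rho,\e}$ is continuous with compact support, one may legitimately apply both identities in~\eqref{eq:totaldecomposition}, and the paper then extracts a subsequence along which $f_{\rho_n}\to \lambda/|\lambda|$ $|\lambda|$-a.e.\ for $\mu$-a.e.\ $\lambda$. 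From there, $\int \varphi\, g(\xi,f_{\rho_n})\,d|\sigma| = \int_\M\int \varphi\, g(\xi,f_{\rho_n})\,d|\lambda|\,d\mu$ holds for each $n$ because the integrand is continuous, and the conclusion follows by dominated convergence on both sides.

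What this buys: the paper's proof stays entirely within the realm of continuous test functions, which is all the Smirnov decomposition literally provides; no Fubini or measurable-selection issues arise. Your approach is shorter and more transparent --- and, as you implicitly show, does not even use the convexity of $g$, only one-homogeneity --- but it leans on extending the identity $|\sigma|=\int_\M|\lambda|\,d\mu$ from $C_c^0$ to bounded Borel functions (which is fine, since two positive finite measures agreeing on $C_c^0$ are equal) and on the $\mu$-measurability of $\lambda\mapsto\int \psi\,d|\lambda|$ for Borel $\psi$ (which follows by a monotone class argument from the continuous case). You correctly flag this as the only real technical point; it is standard, but the paper evidently preferred to sidestep it altogether.
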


\begin{proof}
For $\rho>0$ we define $\Om_\rho=\{x\in \Om:\dist(x,\partial\Om)>\rho\}$.
We let for $\rho>0$, $\e\in (0,\rho)$ and
$\xi\in \OS[2\rho]$
\[
f_{\rho,\e}(\xi)=
\begin{cases}
 \frac{\sigma(\OS[3\rho]\cap\ov{B}(\xi,\rho))}{|\sigma|(B(\xi,\rho+\e))}
&  \textup{ if }
|\sigma|(B(\xi,\rho+\e))>0\,,\\
0 & \textup{ else.}
\end{cases}
\]
We also let $f_{\rho,\e}(\xi)=0$ if $\xi\in\OS\setminus \OS[2\rho]$:
remark then that $f_{\rho,\e}\in C_c^0(\OS;B(0,1))$ and that
\[
f_\rho(\xi):=\lim_{\e\to 0} f_{\rho,\e}(\xi)= \begin{cases}
\frac{\sigma(\OS[3\rho]\cap\ov{B}(\xi,\rho))}{|\sigma|(\ov{B}(\xi,\rho))} & \textup{ if } \xi\in\Om_{2\rho}\times\Sp^1,\,
|\sigma|(\ov{B}(\xi,\rho))>0\,,\\
0 & \textup{ else.}
\end{cases}
\]

By~\eqref{eq:totdecg} we have for any $\vp\in C_c^0(\OS;\R_+)$:
\[
\int_{\OS} \vp(|\sigma|-f_{\rho,\e}\cdot \sigma)
= \int_{\M}\left(\int_{\OS}
\vp(|\lambda|-f_{\rho,\e}\cdot \lambda)\right)d\mu.
\]
Observe that
\begin{multline*}
\int_{\OS}\vp(|\lambda|-f_{\rho,\e}\cdot \lambda)
\\= \int_{\OS}\vp\left(|\lambda|-f_{\rho}\cdot \lambda\right)\frac{{|\sigma|(\ov{B}(\xi,\rho))}}{{|\sigma|(B(\xi,\rho+\e))}}
+ \int_{\OS} \vp\frac{|\sigma|(B(\xi,\rho+\e)\setminus \ov{B}(\xi,\rho))}{|\sigma|(B(\xi,\rho+\e))}|\lambda|
\end{multline*}
and using the monotone convergence theorem, we can send $\e\to 0$ and
deduce that
\begin{equation}\label{eq:RN0}
\int_{\OS} \vp(|\sigma|-f_{\rho}\cdot \sigma)
= \int_{\M}\left(\int_{\OS}
\vp(|\lambda|-f_{\rho}\cdot \lambda)\right)d\mu.
\end{equation}
Then, thanks to Radon-Nikodym's derivation theorem (and
Lebesgue's convergence theorem),
\[
\lim_{\rho\to 0} \int_{\OS} \vp(|\sigma|-f_{\rho}\cdot \sigma)=0.
\]
Observe that all the integrands in~\eqref{eq:RN0} are nonnegative.
Let $\vp_n$ be a nondecreasing sequence of compactly supported
nonnegative smooth functions which converges to $1$. Then
one can build a subsequence $\rho_n\downarrow 0$ such that
\[
\sum_n \int_{\OS} \vp_n(|\sigma|-f_{\rho_n}\cdot \sigma) < +\infty,
\]
so that
\[
\int_{\M} \left(\int_{\OS} \sum_n  \vp_n(|\lambda|-f_{\rho_n}\cdot \lambda)
\right)d\mu < +\infty.
\]
It follows that there exists
a set $E\subset\M$ with $\mu(E)=0$ such that if $\lambda\not\in E$,
\[
\lim_n \int_{\OS} \vp_n(|\lambda|-f_{\rho_n}\cdot \lambda) = 0
\]
and $f_{\rho_n}\cdot \frac{\lambda}{|\lambda|}\to 1$ $|\lambda|$-\ale~in~$\OS$
(using $|\lambda|-f_{\rho_n}\cdot \lambda= |\lambda|(1-f_{\rho_n}\cdot \frac{\lambda}{|\lambda|})\ge 0$).
Using $|f_{\rho_n}-\frac{\lambda}{|\lambda|}|^2= |f_{\rho_n}^2|+1-2f_{\rho_n}\cdot \frac{\lambda}{|\lambda|}\le 2(1-f_{\rho_n}\cdot \frac{\lambda}{|\lambda|})$
it follows
that $f_{\rho_n}\to \frac{\lambda}{|\lambda|}$, $|\lambda|$-\ale

Let $g:(\OS)\times \R^3\to \R$ be a continuous, nonegative,
and convex one-homogeneous function in its second argument.
Then as before for $\vp\in C_c^0(\OS;\R_+)$
\[
\int_{\OS} \vp(\xi) g(\xi,f_{\rho,\e}(\xi))|\sigma|=\int_{\M}\int_{\OS}  \vp(\xi) g(\xi,f_{\rho,\e})|\lambda| d\mu
\]
and since $g(\xi,f_{\rho,\e}(\xi))=(|\sigma|(\ov{B}(\xi,\rho))/|\sigma|(B(\xi,\rho+\e))) g(\xi,f_\rho(\xi))\uparrow g(\xi,f_\rho(\xi))$ as $\e\downarrow 0$,
the monotone convergence theorem yields that
\[
\int_{\OS} \vp g(\xi,f_{\rho})|\sigma|=\int_{\M}\int_{\OS} \vp g(\xi,f_{\rho})|\lambda| d\mu.
\]
Using Lebesgue's theorem, we have
\[
\lim_{\rho\to 0} \int_{\OS} \vp(\xi) g(\xi,f_\rho(\xi))|\sigma|=
\int_{\OS} \vp(\xi) g(\xi,\tfrac{\sigma}{|\sigma|}(\xi))|\sigma|=
 \int_{\OS} \vp(\xi) g(\xi,\sigma).
\]
On the other hand, we know that if $\lambda\not\in E$, still thanks
to Lebesgue's theorem,
\[
\lim_{n} \int_{\OS} \vp(\xi) g(\xi,f_{\rho_n}(\xi))|\lambda| = 
\int_{\OS} \vp(\xi) g(\xi,\tfrac{\lambda}{|\lambda|}(\xi))|\lambda| = \int_{\OS} \vp g(\xi,\lambda).
\]
We invoke one last time Lebesgue's theorem to deduce that
(for all  nonnegative test function $\vp$)
\[
\lim_{n} \int_{\M} \left(\int_{\OS} \vp(\xi) g(\xi,f_{\rho_n}(\xi))|\lambda|\right)d\mu = 
\int_{\M} \left( \int_{\OS} \vp g(\xi,\lambda) \right)d\mu.
\]
We deduce~\eqref{eq:totdecg}.
\end{proof}

\begin{corollary} Let $\sigma$ be admissible for problem~\eqref{eq:deFu},
and assume it is decomposed as in~\eqref{eq:totaldecomposition}.
Then $\mu$-\ale~measure $\lambda$ satisfies
\[
\frac{\lambda}{|\lambda|}\cdot\theta \ge 0\,,
\frac{\lambda}{|\lambda|}\cdot\theta^\perp = 0\, \ale~\textup{in }\OS
\]
\end{corollary}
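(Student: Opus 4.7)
The plan is to apply Lemma~\ref{lem:totdecg} to two carefully chosen one-homogeneous test integrands that extract the two directional constraints hidden in the finiteness of $\int h(\theta,\sigma)$.

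First, I observe that since $\sigma$ is admissible with $F(u)<\infty$, we have $\int_{\OS} h(\theta,\sigma/|\sigma|)\,d|\sigma|<\infty$. By the definition~\eqref{eq:defh} of $h$, the Radon–Besicovitch derivative $\sigma/|\sigma|$ must $|\sigma|$-a.e.\ satisfy $(\sigma/|\sigma|)^x \in \R_+\utheta$; equivalently $(\sigma/|\sigma|)^x\cdot\utheta\ge 0$ and $(\sigma/|\sigma|)^x\cdot\utheta^\perp=0$ at $|\sigma|$-a.e.\ point.

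Second, I introduce the two functions
\[
g_1(\xi,p) = \bigl(-p^x\cdot\utheta\bigr)_+,\qquad g_2(\xi,p) = |p^x\cdot\utheta^\perp|,
\]
where $\xi=(x,\theta)\in\OS$. Each $g_j$ is continuous in $\xi$ (through $\utheta=(\cos\theta,\sin\theta)$), nonnegative, convex and one-homogeneous in the second argument, so Lemma~\ref{lem:totdecg} applies. The first step above implies $g_j(\xi,\sigma/|\sigma|)=0$ at $|\sigma|$-a.e.\ point, hence $\int_{\OS} g_j(\xi,\sigma)=0$ for $j=1,2$.

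Applying Lemma~\ref{lem:totdecg} with $g=g_1$ and $g=g_2$ then gives
\[
\int_{\M}\left(\int_{\OS} g_j(\xi,\lambda)\right) d\mu(\lambda) \;=\; 0,\qquad j=1,2.
\]
Since the inner integrals are nonnegative, for $\mu$-a.e.\ $\lambda\in\M$ both vanish. From $\int g_1(\xi,\lambda)=0$ we conclude that $(\lambda/|\lambda|)^x\cdot\utheta\ge 0$ at $|\lambda|$-a.e.\ $(x,\theta)$, and from $\int g_2(\xi,\lambda)=0$ that $(\lambda/|\lambda|)^x\cdot\utheta^\perp=0$ $|\lambda|$-a.e., which is exactly the claim. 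The only subtle point is to double-check that the non-negativity of each integrand really lets us pass from the $\mu$-integral vanishing to the $|\lambda|$-a.e.\ constraint (standard, via countably many nonnegative test functions $\varphi\in C_c^0(\OS)$ approximating the indicator of a generic set), but no deeper estimate is needed.
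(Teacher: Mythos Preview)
Your proof is correct and follows exactly the paper's approach: apply Lemma~\ref{lem:totdecg} with $g(\xi,p)=(\utheta\cdot p)^-$ and then with $g(\xi,p)=|\utheta^\perp\cdot p|$. You have simply spelled out the details the paper leaves implicit, namely why $\int_{\OS} g_j(\xi,\sigma)=0$ (from finiteness of $\int h(\theta,\sigma)$ and the structure of $h$) and how nonnegativity of the inner integrand yields the $\mu$-a.e.\ conclusion.
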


\begin{proof}
Apply the Lemma with $g(\xi,p) = (\theta\cdot p)^-$ and
then with $g(\xi,p)= |\theta^\perp\cdot p|$.
\end{proof}
\begin{corollary} Let $\sigma$ be admissible for problem~\eqref{eq:deFu},
and assume it is decomposed as in~\eqref{eq:totaldecomposition}.
Let $h$ be defined as in~\eqref{eq:defh}, and $\vp\in C_c^0(\Om)$
a nonnegative test function. Then
\begin{equation}\label{eq:totdech}
\int_{\OS} \vp(x)h(\theta,\sigma) = \int_{\M}\left(\int_{\OS} \vp(x)h(\theta,\lambda)\right)d\mu(\lambda).
\end{equation}
\end{corollary}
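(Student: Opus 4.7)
The plan is to reduce the statement to Lemma~\ref{lem:totdecg} by approximating $h(\theta,\cdot)$ from below by a nondecreasing sequence of continuous, convex, one-homogeneous functions, and then passing to the monotone limit on both sides. The obstruction is that $h(\theta,\cdot)$, as defined in \eqref{eq:defh}, takes the value $+\infty$ outside the cone $\{p : p^x\in\R_+\utheta\}$ and is therefore not continuous, so Lemma~\ref{lem:totdecg} cannot be applied directly to~$h$.

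First I would construct the approximations. By~\eqref{eq:dualh}, $h(\theta,p)$ is the support function of the closed convex set $K(\theta)=\{\xi\in\R^3:\xi^x\cdot\utheta\le -f^*(\xi^\theta)\}$, which varies continuously with $\theta$ and whose interior contains a fixed neighbourhood of the origin uniformly in $\theta$ (thanks to~\eqref{eq:controlbelow}). For each $n\in\N$, set $K_n(\theta):=K(\theta)\cap\ov{B}(0,n)$ and let
\[
h_n(\theta,p):=\sup_{\xi\in K_n(\theta)}\xi\cdot p.
\]
Each $h_n$ is nonnegative, convex and one-homogeneous in $p$, $n$-Lipschitz in $p$, and since $\theta\mapsto K_n(\theta)$ is Hausdorff-continuous (the defining constraint depends continuously on $\theta$, with $f^*$ lsc), $h_n$ is jointly continuous on $\Sp^1\times\R^3$. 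Finally $h_n(\theta,p)\uparrow h(\theta,p)$ as $n\to\infty$ for every $(\theta,p)$, because $K_n(\theta)\uparrow K(\theta)$.

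Then I would apply Lemma~\ref{lem:totdecg} to the function $g_n(x,\theta,p):=\vp(x)h_n(\theta,p)$, which meets all its hypotheses (nonnegative, continuous in $(\xi,p)$, convex and one-homogeneous in $p$), to obtain
\[
\int_{\OS}\vp(x)\,h_n(\theta,\sigma)=\int_{\M}\Bigl(\int_{\OS}\vp(x)\,h_n(\theta,\lambda)\Bigr)d\mu(\lambda)
\]
for every $n$. Writing these integrals intrinsically against $|\sigma|$ (resp.~$|\lambda|$) as in the formulation following~\eqref{eq:deFu}, the monotone convergence theorem makes the left-hand side tend to $\int_{\OS}\vp(x)\,h(\theta,\sigma)$. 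On the right, the inner integrand $\lambda\mapsto\int\vp(x)\,h_n(\theta,\lambda)$ is a nondecreasing sequence of nonnegative $\mu$-measurable functions of $\lambda$---each is weak-$*$ lower semicontinuous on $\M^1(\OS;\R^3)$ by the same dualisation as~\eqref{eq:intdualh} applied to $K_n$, hence Borel---so a second application of monotone convergence yields~\eqref{eq:totdech}. The delicate point is the joint continuity and monotone convergence of the approximants $h_n$; once those are in place, the identity is a clean two-step passage to the limit in Lemma~\ref{lem:totdecg}, and the cases in which either side is infinite are handled automatically by monotone convergence.
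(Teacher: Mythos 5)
Your proof is correct and follows essentially the same route as the paper: the paper approximates $h$ from below by the inf-convolution $h_\e(\theta,p)=\min_q h(\theta,q)+\tfrac1\e|p-q|$, which by convex duality is exactly the support function of $K(\theta)\cap\ov{B}(0,1/\e)$, i.e.\ your $h_n$ with $n=1/\e$. Both arguments then invoke Lemma~\ref{lem:totdecg} for the continuous, convex, one-homogeneous approximants and conclude by monotone convergence.
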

\begin{proof}
This is a consequence of Lemma~\ref{lem:totdecg}, however now $h$ can take the
value $+\infty$. We simply observe that if we let, for $\e>0$,
\[
h_\e(\theta,p):= \min_{q\in\R^3} h(\theta,q)+\frac{1}{\e}|p-q|
\]
then $h_\e$ is $(1/\e)$-Lipschitz, convex, one-homogeneous in the
second variable, and continuous (and hence also $(\xi,p)=((x,\theta),p)
\mapsto\vp(x)h_\e(\theta,p)$).
Hence~\eqref{eq:totdecg} yields
\begin{equation*}%\label{eq:totdech}
\int_{\OS} \vp(x)h_\e(\theta,\sigma) = \int_{\M}\left(\int_{\OS}\vp(x) h_\e(\theta,\lambda)\right)d\mu(\lambda).
\end{equation*}
The results follows from the monotone convergence theorem.
\end{proof}

\subsection{Conclusion: proof of Theorem~\ref{th:C2}}

We can now complete the proof of Theorem~\ref{th:C2}. We consider $E$ a $C^2$
set. First, if $\gamma(t):[0,1]\to\Om$ is an
oriented parameterization of (the closure of) $\partial E\cap \Omega$,
(assuming it is connected, otherwise one needs to introduce a curve
for each connected component), one can define in $\OS$
the curve $\lambda(t)=(\gamma(t),\theta(t))$ where $\theta(t)$ is
defined by $\lambda(t)=|\gamma'(t)|(\cos \theta(t),\sin\theta(t))^T$.
We then let $\Gamma=\lambda([0,1])\cap \OS$ and
$\sigma=\tau_{\Gamma}\H^1\restr\Gamma$. Then, $\sigma$ is admissible
for~\eqref{eq:deFu} with $u=\chi_E$, and one finds that
\[
F(\chi_E)\le \int_{\OS}  h(\theta,\sigma) = \int_{\partial E\cap \Om}f(\kappa_E)d\H^1.
\]

We need therefore to prove the reverse inequality. Let $\sigma$
be admissible for~\eqref{eq:deFu}. Thanks to~\cite[Theorem~A]{Smirnov}
(\textit{cf}~Appendix~\ref{app:Smirnov}, eq.~\eqref{eq:SmirnovCurves}),
one can decompose $\sigma$ as
\[
\sigma = \int_{\Cone}\lambda d\mu(\lambda),\quad 
|\sigma| = \int_{\Cone}|\lambda| d\mu(\lambda),
\]
where $\lambda$ are of the form
\[
\lambda_\gamma = \tau_{\gamma} \H^1\restr\gamma
\]
for rectifiable (possibly closed) curves $\gamma\subset\OS$ of length
at most one.

Equation~\eqref{eq:totdech} is valid for this decomposition and
shows that for any $\vp\in C_c^0(\Om;[0,1])$,
\begin{equation}\label{eq:totdechgamma}
\int_{\OS} \vp(x)h(\theta,\sigma) = \int_{\Cone}\left(\int_{\gamma} \vp(x)h(\theta,\tau_\gamma)d\H^1\right)d\mu(\lambda_\gamma).
\end{equation}

Now, for any $\psi\in C_c^0(\Om;\R^2)$, one has
\begin{equation}\label{eq:aaa}
\int_{\OS} (\psi,0)^T\cdot\sigma = \int_\Om \psi\cdot Du^\perp
= \int_{\partial \Om} \psi\cdot \tau_E d\H^1
\end{equation}
where $\tau_E$ is a tangent vector to $\partial E$
(oriented with $E$ on the left-hand side and $E^c$ on the right-hand
side\footnote{This just depends on the choice of the $90^\circ$ rotation
$x\mapsto x^\perp$.}).

Let us introduce the signed distance function $d_E(x)=\dist(x,\Om\setminus E)
-\dist(x,E)$ which is $C^2$ in a neighborhood of $\partial E$
and is such that $\nabla d_E=\nu_E$ (the inner normal)
on $\partial\Om$ and $\Delta d_E=-\kappa_E$ (we assume the
curvature is nonnegative where the set is convex).
If we consider, for $\e>0$, a test function 
of the form
\[
\psi(x)=(\nabla d_E(x))^\perp \left(1-\frac{|d_E(x)|}{\e}\right)^+\vp(x),
\]
with $\vp\in C_c^0(\Om)$, \eqref{eq:aaa} yields
\begin{multline*}
\int_{\partial E}\vp(x)d\H^1 = \int_{\OS}(\psi,0)^T\cdot\sigma
\\=\int_{\Cone} 
\left(\int_\gamma \vp(x)\left(1-\frac{|d_E(x)|}{\e}\right)^+
((\nabla d_E(x))^\perp,0)^T\cdot\tau_{\gamma}(x,\theta) d\H^1\right)d\mu(\lambda_\gamma).
\end{multline*}
Sending $\e\to 0$ %% Monotone (splitting in positive/negative parts)? Lebesgue?
we find that
\begin{equation*}
\int_{\partial E}\vp(x)d\H^1=\int_{\Cone} 
\left(\int_{\gamma\cap (\partial E\times\Sp^1)} \vp\, (\tau_E,0)^T\cdot\tau_{\gamma} d\H^1\right)d\mu(\lambda_\gamma).
\end{equation*}
Now, we can choose $\vp$ of the form $\vp(x)f(\kappa_E(x))$,
 with $\vp\in C_c^0(\Om;[0,1])$, assuming $f$ is finite-valued.
We obtain that
\begin{equation}\label{bbb}
\int_{\partial E}\vp f(\kappa_E)d\H^1=\int_{\Cone} 
\left(\int_{\gamma\cap (\partial E\times\Sp^1)}
 \vp f(\kappa_E)\tau_E\cdot\tau^x_{\gamma} d\H^1\right)d\mu(\lambda_\gamma).
\end{equation}
Given the rectifiable curve $\gamma$, one has that
\begin{multline}\label{ccc}
\int_{\gamma\cap(\partial E\times \Sp^1)} \vp f(\kappa_E)\tau_E\cdot\tau^x_{\gamma} d\H^1
\le \int_{\Pi(\gamma)\cap\partial E} \vp f(\kappa_E)(\tau_E\cdot\tau_{\Pi(\gamma)})^+d\H^1
\\\le \int_{\OS} \vp(x)h(\theta,\lambda_\gamma)
\end{multline}
thanks to~\eqref{eq:boundimportant} in Lemma~\ref{lem:important}.
Then, \eqref{eq:totdechgamma}, \eqref{bbb} and~\eqref{ccc} yield
\[
\int_{\partial E}\vp(x) f(\kappa_E) d\H^1 \le \int_{\OS} \vp(x)h(\theta,\sigma)
\]
and Theorem~\ref{th:C2} is easily deduced. In case $f$ takes the
value $+\infty$, then we first approximates $f$ with a finite-valued
function from below (replacing $f$ with $\min_{t'} f(t')+|t-t'|/\e$
for $\e>0$ small), and once the inequality is established for
this function we send $\e\to 0$, so that~\eqref{eq:goodrelaxE} also
holds.

\section*{Acknowledgements}

The authors would like to thank
the Isaac Newton Institute for Mathematical Sciences,
Cambridge, for support and hospitality during the programme
``Variational methods, new optimisation techniques and new fast numerical algorithms'' (Sept.-Oct., 2017), when this paper was completed.
This work was supported by: EPSRC Grant N.~EP/K032208/1.
 The work of A.C.~was also
partially supported by a grant of the Simons
Foundation. T.P.~acknowledges support by the Austrian science fund
(FWF) under the project EANOI, No. I1148 and the ERC starting grant
HOMOVIS, No. 640156.

\bibliography{RT}
%\printbibliography

\appendix

\section{Consistency of the discretization}\label{app:consist}

In this appendix, we study the consistency of the discrete
approximation of the problem
which is used in Section~\ref{sec:numexp}.

\subsection{Preliminary results}
Let us introduce, for $(s,t)\in \R^2$,
\[
\bar h(s,t) = \begin{cases} s f(t/s) & \textup{ if } s>0,\\
f^\infty(t) & \textup{ if } s=0,\\
+\infty & \textup{else,}
\end{cases}
\]
which is such that
\[
\bar h(s,t) = \sup_{a+f^*(b)\le 0} as+bt.
\]
Observe that if the convex function $f$ is differentiable,
then for $s>0$, $\partial_s \bar h(s,t)=f(t/s)-(t/s)f'(t/s)\le f(0)$.

Consider a vector-valued measurable function $\sigma(\theta)
=(\lambda(\theta)\utheta,\mu(\theta))$
where $\lambda\ge 0$. Let then $\bar\theta\in \Sp^1$, $\dt>0$ and
\[
\bar\sigma =\frac{1}{\dt}\int_{\bar\theta-\frac{\dt}{2}}^{\bar\theta+\frac{\dt}{2}} \sigma(\theta)d\theta.
\]
% We claim that if $\dt$ is small enough, %% WRONG
% \[
% \bar h(\bar\sigma\cdot\utheta_0,\bar\sigma^\theta)
% \le \left(1+\frac{f(0)}{12\gamma} \dt^2\right)
% \frac{1}{\dt}\int_{\bar\theta-\frac{\dt}{2}}^{\bar\theta+\frac{\dt}{2}} h(\theta,\sigma(\theta))d\theta.
% \]
We first observe that
\[
\bar\sigma\cdot\bar\utheta=\frac{1}{\dt}\int_{\bar\theta-\frac{\dt}{2}}^{\bar\theta+\frac{\dt}{2}}
\lambda(\theta)\utheta\cdot \bar\utheta d\theta
\in \left[\bar \lambda \cos\tfrac{\dt}{2},\bar\lambda\right]
\]
where
\[
\bar\lambda=\frac{1}{\dt}\int_{\bar\theta-\frac{\dt}{2}}^{\bar\theta+\frac{\dt}{2}} \lambda(\theta)d\theta.
\]
Assuming $f$ is smooth, it follows that
\begin{multline*}
%\bar h\left(\bar \sigma\cdot\bar\utheta,\bar\sigma^\theta\cos\frac{\dt}{2}\right)
%= \cos\frac{\dt}{2} 
\bar h \Big(\tfrac{\bar \sigma\cdot\bar\utheta}{\cos\frac{\dt}{2}},\bar\sigma^\theta\Big)
=
\bar h(\bar \lambda,\bar\sigma^\theta)
+\int_0^1 \partial_s 
\bar  h \Big(\bar\lambda+s\Big(\tfrac{\bar \sigma\cdot\bar\utheta}{\cos\frac{\dt}{2}}-\bar\lambda\Big),\bar\sigma^\theta\Big)
\Big(\tfrac{\bar \sigma\cdot\bar\utheta}{\cos\frac{\dt}{2}}-\bar\lambda\Big)
ds
\\
\le 
\bar h(\bar \lambda,\bar\sigma^\theta)
 + f(0)\bar\lambda\frac{1-\cos\frac{\dt}{2}}{\cos\frac{\dt}{2}}.
\end{multline*}
Using that $\bar h$ is $1$-homogeneous and that $\bar h(s,t)\ge\gamma\sqrt{s^2+t^2}$, it follows
\begin{multline*}
\bar h \big(\bar \sigma\cdot\bar\utheta,\cos\tfrac{\dt}{2}\bar\sigma^\theta\big)
\le \cos\tfrac{\dt}{2} \bar h(\bar \lambda,\bar\sigma^\theta)
+ \big(1-\cos\tfrac{\dt}{2}\big)
\frac{f(0)}{\gamma} \bar h(\bar \lambda,\bar\sigma^\theta)
\\\le \left(
1+\frac{\dt^2}{4}(f(0)/\gamma-1)\right)\bar h(\bar \lambda,\bar\sigma^\theta)
\end{multline*}
if $\dt$ is small enough. Observe that if $f$ is not smooth, this still
holds by approximation. It follows, thanks to Jensen's inequality
and the fact that $h(\theta,\sigma)=\bar h(\lambda,\sigma^\theta)$ for
all $\theta$, that provided $\dt$ is small enough (not depending
on anything)
\begin{equation}\label{eq:averageestimateh}
\bar h \big(\bar \sigma\cdot\bar\utheta,\cos\tfrac{\dt}{2}\bar\sigma^\theta\big)
\le
(1+C{\dt^2})\frac{1}{\dt}
\int_{\bar\theta-\frac{\dt}{2}}^{\bar\theta+\frac{\dt}{2}} h(\theta,\sigma(\theta))d\theta,
\end{equation}
where the constant $C=(f(0)/\gamma-1)/4$ only depends on the function $f$.
\begin{remark}\label{rem:smeasure}\textup{
If $\sigma$ is a bounded measure such that
$\int h(\theta,\sigma)<+\infty$, then~\eqref{eq:averageestimateh}
still holds with now all integrals and averages
replaced with integrals over $[\bar\theta-\dt/2,\bar\theta+\dt/2)$.
Indeed, in this case, approximating
$\sigma$ with smooth measures by convolution one easily deduces
that it holds for almost all $\bar\theta$
(whenever $|\sigma|(\{\bar\theta-\dt/2,\bar\theta+\dt/2\})=0$). 
Then, for the other values, it is enough to find a sequence
$\e_n>0$ with $\e_n\downarrow 0$ such that the result holds for
$\bar\theta-\e_n$ and use the fact that for any bounded measure $\mu$,
\[
\lim_{n\to\infty} \mu([\bar\theta-\e_n-\tfrac{\dt}{2},\bar\theta-\e_n+\tfrac{\dt}{2}))=
\mu([\bar\theta-\tfrac{\dt}{2},\bar\theta+\tfrac{\dt}{2}))
\]
as $\mu([\theta-\e_n,\theta))\to 0$ as $n\to\infty$ for any $\theta\in\Sp^1$.
}\end{remark}

We can now show the following lemma:
\begin{lemma}\label{lem:controlmean}
Let $\dx,\dt>0$ small enough, $\sigma\in \M^1(\OS;\R^3)$ with
$\Div\sigma=0$ and such that $\int_{\OS} h(\theta,\sigma)<\infty$. Define,
for $(\bar x,\bar\theta)\in\OS$,
\[S=\left[\bar x_1-\tfrac{\dx}{2},\bar x_1+\tfrac{\dx}{2}\right)
\times\left[\bar x_2-\tfrac{\dx}{2},\bar x_2+\tfrac{\dx}{2}\right)
\]
and
\[
\bar\sigma = \frac{1}{\dx^2\dt}\sigma(S\times [\bar\theta-\tfrac{\dt}{2},\bar\theta+\tfrac{\dt}{2})).
\]
Then, 
\begin{equation}\label{eq:controlmean}
\bar h(\bar\sigma\cdot\bar\utheta,\cos\tfrac{\dt}{2}\bar\sigma^\theta)
\le \frac{1+C\dt^2}{\dx^2\dt} \int_{S\times [\bar\theta-\frac{\dt}{2},\bar\theta+\frac{\dt}{2})} h(\theta,\sigma)
\end{equation}
where $C$ depends only on $f$. Moreover, $\bar\sigma^x/|\bar\sigma|$ lies
in the cone $\R_+(\underline{\theta-\dt/2})+\R_+(\underline{\theta+\dt/2})$.
\end{lemma}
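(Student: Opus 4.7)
The plan is to reduce the double averaging (in $x$ and in $\theta$) to the one-variable angular average already estimated in~\eqref{eq:averageestimateh}. The key observation is that $h(\theta,p)$ does not depend on $x$, so the spatial averaging over $S$ is essentially free: it should be absorbed by a Jensen-type inequality with no loss. The only real work is the angular averaging, which is exactly the content of~\eqref{eq:averageestimateh} (in its measure form from Remark~\ref{rem:smeasure}).

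Concretely, I would introduce the $\R^3$-valued Borel measure $\tilde\sigma$ on the arc $I:=[\bar\theta-\tfrac{\dt}{2},\bar\theta+\tfrac{\dt}{2})$ defined by $\tilde\sigma(A):=\sigma(S\times A)/\dx^2$, so that $\bar\sigma=\tilde\sigma(I)/\dt$. The first step is to check that $\tilde\sigma$ inherits the structural constraint enforced by $h$: since $\int h(\theta,\sigma)<\infty$, for $|\sigma|$-a.e.\ $(x,\theta)\in\OS$ the Radon-Nikodym derivative $d\sigma/d|\sigma|$ is of the form $(\lambda\utheta,\mu)$ with $\lambda\ge 0$. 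Testing against continuous $\theta$-dependent vector fields $g(\theta)\utheta^\perp$ and $g(\theta)\utheta$ (with $g\le 0$), one sees that $\tilde\sigma^x$ satisfies $\utheta^\perp\cdot d\tilde\sigma^x=0$ and $\utheta\cdot d\tilde\sigma^x\ge 0$, i.e.\ $d\tilde\sigma^x(\theta)=\utheta\,d\lambda(\theta)$ for some nonnegative scalar measure $\lambda$ on $I$.

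The second step is the Jensen-type inequality $\int_I h(\theta,\tilde\sigma)\le\dx^{-2}\int_{S\times I}h(\theta,\sigma)$. This is immediate from the dual formulation~\eqref{eq:intdualh}: for any continuous $\vp:I\to\R^3$ with $\vp(\theta)\in K(\theta)$,
\[
\int_I\vp(\theta)\cdot d\tilde\sigma(\theta)=\frac{1}{\dx^2}\int_{S\times I}\vp(\theta)\cdot d\sigma(x,\theta)\le\frac{1}{\dx^2}\int_{S\times I}h(\theta,\sigma),
\]
and taking the supremum over $\vp$ (with a localization argument using that $0\in\mathrm{int}\,K(\theta)$) gives the claim. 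With this in hand, I apply~\eqref{eq:averageestimateh} to $\tilde\sigma$ (in the measure form of Remark~\ref{rem:smeasure}, which handles the half-open endpoints of $I$), obtaining
\[
\bar h\bigl(\bar\sigma\cdot\bar\utheta,\cos\tfrac{\dt}{2}\,\bar\sigma^\theta\bigr)\le\frac{1+C\dt^2}{\dt}\int_I h(\theta,\tilde\sigma)\le\frac{1+C\dt^2}{\dx^2\dt}\int_{S\times I}h(\theta,\sigma),
\]
which is~\eqref{eq:controlmean}.

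For the cone statement, the decomposition $d\tilde\sigma^x(\theta)=\utheta\,d\lambda(\theta)$ yields $\bar\sigma^x=\dt^{-1}\int_I\utheta\,d\lambda(\theta)$, which is a nonnegative-weighted integral of the unit vectors $\utheta$, $\theta\in I$. Provided $\dt<\pi$, all these vectors lie inside the closed convex cone spanned by the two extreme directions $\underline{\bar\theta-\dt/2}$ and $\underline{\bar\theta+\dt/2}$, and this cone is convex, so $\bar\sigma^x$ belongs to it. I expect the main care-point to be the passage from the smooth-function estimate~\eqref{eq:averageestimateh} to the general measure setting (already essentially in Remark~\ref{rem:smeasure}), and the verification that the structural constraint on $\sigma$ descends correctly to $\tilde\sigma$ through the pushforward—both routine once the right testing is set up.
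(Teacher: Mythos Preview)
Your proposal is correct and follows essentially the same route as the paper: define the spatial marginal (your $\tilde\sigma$, the paper's $\sigma'$), establish the Jensen-type inequality for the $x$-averaging, then invoke~\eqref{eq:averageestimateh} via Remark~\ref{rem:smeasure}; the cone statement is handled identically. The only cosmetic difference is that you justify the Jensen step through the dual representation~\eqref{eq:intdualh}, whereas the paper invokes disintegration in $\theta$ and Jensen in each $x$-slice---both yield the same inequality.
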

\begin{proof}
If we introduce the (averaged) marginal $\sigma'\in \M^1(\Sp^1;\theta)$ defined
by $\int_{\Sp^1}\psi \sigma' = \frac{1}{\dx^2}\int_{S\times \Sp^1}\psi\sigma$ for all
$\psi\in C^0(\Sp^1)$, then one observes that
\[
\bar\sigma = \frac{1}{\dt}\sigma'({[\bar\theta-\tfrac{\dt}{2},\bar\theta+\tfrac{\dt}{2})})
\]
and
\[
\int_{[\bar\theta-\frac{\dt}{2},\bar\theta+\frac{\dt}{2})} h(\theta,\sigma')
\le \int_{S\times [\bar\theta-\frac{\dt}{2},\bar\theta+\frac{\dt}{2})} h(\theta,\sigma).
\]
This follows by a disintegration argument and using Jensen's inequality in
each ``slice'' corresponding to a fixed value of $\theta$. The result
then follows from~\eqref{eq:averageestimateh}, together with Remark~\ref{rem:smeasure}.

The last statement comes from the fact that $\sigma_x$ is the average
of measures all contained in the cone, which is convex.
\end{proof}

Consider now a measure $\sigma$ admissible for some function $u$,
and assume that $\sigma$ is ``smooth'' in $x$: 
we assume for instance that it is the result of a convolution
$\rho_\e*\sigma'$ for some $\sigma'$ admissible (possibly extended
in a larger domain), with $\rho_\e(x)$ a rotationally symmetric
mollifier, as in the proof of Proposition~\ref{thm:smoothapprox}.
In this case, $x\mapsto \sigma$ can be seen as
a $\M^1(\Sp^1;\R^3)$-valued smooth function.

Consider $(\bar x,\bar\theta)\in\OS$, $\dx,\dt$ small, and define 
in the volume $V=S\times [\bar\theta-\dt/2,\bar\theta +\dt/2)$
the average $\bar\sigma$ as before and the ``Raviart-Thomas'' approximation
of $\sigma$ defined by the average fluxes through the 6 facets of $V$
(linearly extended inside the volume, as in eq.~\ref{eq:RaTho}).
There are several ways to define this properly,
at least for all $\bar\theta$ but a countable number.
In our case, one
can disintegrate the measure in $\OS$ as $\sigma=\sigma_\theta d\mu$
where $\mu$ is a bounded positive measure in $\Sp^1$ and 
for all $\theta$, $\sigma_\theta\in C^\infty(\Om)$. Then
\[
\tau^1_\pm = \frac{1}{\dx\dt}\int_{[\bar\theta-\dt/2,\bar\theta +\dt/2)}\left(
\int_{\bar x_2-\frac{\dx}{2}}^{\bar x_2+\frac{\dx}{2}}
 \sigma^1_\theta(\bar x_1\pm\tfrac{\dx}{2},x_2,\theta)dx_2 \right)\mu,
\]
\[
\tau^2_\pm = \frac{1}{\dx\dt}\int_{[\bar\theta-\dt/2,\bar\theta +\dt/2)}\left(
\int_{\bar x_1-\frac{\dx}{2}}^{\bar x_1+\frac{\dx}{2}}
 \sigma^1_\theta(x_1,\bar x_2\pm\tfrac{\dx}{2},\theta)dx_1 \right)\mu.
\]
To define the  vertical fluxes $\tau^\theta_\pm$ we 
assume in addition that $\mu(\{\bar\theta\pm\dt/2\})=0$
(which is true for all values but a countable number). In this
case, observe that if
$\phi\in C_c^1(\mathring{S}\times\{\theta=-\dt/2\})$, it can be extended
into a $C^1$ function in $V$ vanishing near the 5 other boundaries,
and then 
\[
\int_V \nabla\phi\cdot \sigma
\]
defines a measure $\tilde\tau^\theta_-$ on $\mathring{S}\times\{\theta=-\dt/2\}$.
Then one simply let $\tau^\theta_- = \tilde\tau^\theta_-(\mathring{S}\times\{\theta=-\dt/2\})/\dx^2$. 
The value $\tau^\theta_+$ is defined in the same way.
The assumption that $\mu(\{\bar\theta-\dt/2\})=0$
guarantees that the same construction from below will build the same measure
and the same value, and that one actually has
$(\tau^1_+-\tau^1_-+\tau^2_+-\tau^2_-)/\dx+(\tau^\theta_+-\tau^\theta_-)/\dt=0$.

We can show the following lemma.
\begin{lemma}
Let $\tau = (\tau^1_a,\tau^2_b,\tau^\theta_c)^T$ for any $(a,b,c)\in \{-,+\}^3$.
Then, for all~$\bar\theta$ but a countable number,
\begin{equation}\label{eq:RT0vsAve}
\dx^2\dt|\tau-\bar\sigma| \le \sqrt{\dx^2+\dt^2}\int_{V} |\partial_1\sigma^1|+|\partial_2\sigma^2|.
\end{equation}
\end{lemma}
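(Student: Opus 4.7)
The plan is to bound each of the three components of $\tau-\bar\sigma$ separately and then combine the bounds with the Euclidean norm. Write $\sigma=\sigma_\theta(x)\,d\mu(\theta)$ via disintegration, so that $\sigma_\theta$ is smooth in $x$ and $\mu$ is a nonnegative bounded measure on $\Sp^1$.

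\textbf{Step 1: Spatial components.} For $\tau^1_-$, I would compute
\[
\dx^2\dt(\bar\sigma^1-\tau^1_-)=\int_{[\bar\theta-\frac{\dt}{2},\bar\theta+\frac{\dt}{2})}\!\!\int_S\bigl(\sigma^1_\theta(x_1,x_2)-\sigma^1_\theta(\bar x_1-\tfrac{\dx}{2},x_2)\bigr)dx\,d\mu(\theta),
\]
then apply the fundamental theorem of calculus in $x_1$ (legitimate because $\sigma^1_\theta$ is smooth in $x$) and bound the inner difference by $\int_{\bar x_1-\dx/2}^{\bar x_1+\dx/2}|\partial_1\sigma^1_\theta(s,x_2)|\,ds$. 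Integrating in $x_1$ over an interval of length $\dx$ produces the factor $\dx$, giving
\[
\dx^2\dt\,|\bar\sigma^1-\tau^1_\pm|\le \dx\int_V|\partial_1\sigma^1|=:\dx\,A_1,
\]
and the identical argument with the roles of the first two variables swapped yields $\dx^2\dt\,|\bar\sigma^2-\tau^2_\pm|\le \dx A_2$ with $A_2=\int_V|\partial_2\sigma^2|$.

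\textbf{Step 2: Angular component.} This is the step I expect to be the main obstacle, since $\sigma^\theta$ is only a measure in $\theta$, so a direct FTC in $\theta$ is not available. My plan is to exploit the divergence-free constraint $\partial_\theta\sigma^\theta=-\partial_1\sigma^1-\partial_2\sigma^2$, whose right-hand side is a smooth-in-$x$ function times $d\mu$, hence a bona fide bounded measure on $V$ of total variation at most $A:=A_1+A_2$. To get the matching face-vs-volume bound I would approximate $\mu$ by mollification (in $\theta$) by a kernel supported in $(-\eta,\eta)$ with $\eta\to 0$, carry out the same FTC/Fubini argument as in Step~1 but in the $\theta$ variable, and pass to the limit: since $\mu(\{\bar\theta\pm\dt/2\})=0$, both $\bar\sigma^\theta$ and the distributional trace $\tau^\theta_\pm$ of the smoothed sequence converge to those of $\sigma$ (this is precisely why the countable set of $\bar\theta$ with $\mu$-atoms on the face is excluded). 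This yields
\[
\dx^2\dt\,|\bar\sigma^\theta-\tau^\theta_\pm|\le \dt\int_V|\partial_\theta\sigma^\theta|\le \dt\,(A_1+A_2).
\]

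\textbf{Step 3: Combination.} Summing the squares of the three bounds from Steps 1 and 2,
\[
(\dx^2\dt)^2|\tau-\bar\sigma|^2\le \dx^2 A_1^2+\dx^2 A_2^2+\dt^2(A_1+A_2)^2\le (\dx^2+\dt^2)(A_1+A_2)^2,
\]
where in the last inequality I use $A_1^2+A_2^2\le(A_1+A_2)^2$. Taking square roots gives exactly~\eqref{eq:RT0vsAve}. The choice of signs $(a,b,c)$ is irrelevant since each bound in Step~1 and Step~2 was obtained uniformly in the sign, which justifies the statement for all eight corner combinations simultaneously.
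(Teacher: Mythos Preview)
Your proposal is correct and follows essentially the same route as the paper: componentwise fundamental-theorem-of-calculus bounds for $\tau^1_\pm-\bar\sigma^1$ and $\tau^2_\pm-\bar\sigma^2$, the divergence-free identity $\partial_\theta\sigma^\theta=-\partial_1\sigma^1-\partial_2\sigma^2$ (after mollifying in $\theta$ and passing to the limit under the atom-free hypothesis $\mu(\{\bar\theta\pm\dt/2\})=0$) for the $\theta$-component, and the same Euclidean combination $\dx^2(A_1^2+A_2^2)+\dt^2(A_1+A_2)^2\le(\dx^2+\dt^2)(A_1+A_2)^2$. The only cosmetic difference is that the paper records exact weighted identities such as $\tau^1_-=\bar\sigma^1-\frac{1}{\dx^2\dt}\int_V(\bar x_1+\tfrac{\dx}{2}-x_1)\partial_1\sigma^1$ before bounding the weight by $\dx$, whereas you bound immediately; the resulting estimates are identical.
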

\begin{proof}
We prove the result for $(a,b,c)=(-,-,-)$, the proof in the other cases 
being identical.
We first assume that $\sigma$ is also $C^1$ in $\theta$ (which can be 
achieved by convolution).
In this case, one has for all $(x_1,x_2,\theta)\in V$,
\[
\sigma^1(\bar x_1-\tfrac{\dx}{2},x_2,\theta)=
\sigma^1(x_1,x_2,\theta) - \int_{\bar x_1-\frac{\dx}{2}}^{x_1}
\partial_1 \sigma^1 (s,x_2,\theta) ds
\]
so that
\[
\dx \sigma^1(\bar x_1-\tfrac{\dx}{2},x_2,\theta)=
\int_{\bar x_1-\frac{\dx}{2}}^{\bar x_1+\frac{\dx}{2}}\sigma^1(x_1,x_2,\theta)
 - \int_{\bar x_1-\frac{\dx}{2}}^{\bar x_1+\frac{\dx}{2}}
(\bar x_1+\tfrac{\dx}{2}-s) \partial_1 \sigma^1 (s,x_2,\theta) ds
\]
Averaging over $x_2,\theta$, we deduce that
\[
\tau^1_- = \bar\sigma^1 - \frac{1}{\dx^2\dt}\int_{V} (\bar x_1+\tfrac{\dx}{2}-x_1) \partial_1 \sigma^1 dx_1dx_2d\theta.
\]
In the same way,
\[
\tau^2_- = \bar\sigma^2 - \frac{1}{\dx^2\dt}\int_{V} (\bar x_2+\tfrac{\dx}{2}-x_2) \partial_2 \sigma^2 dx_1dx_2d\theta,
\]
\[
\tau^\theta_- = \bar\sigma^\theta - \frac{1}{\dx^2\dt}\int_{V} (\bar \theta+\tfrac{\dt}{2}-\theta) \partial_\theta \sigma^\theta dx_1dx_2d\theta.
\]
Using that $\Div\sigma=0$, the latter can be rewritten
\[
\tau^\theta_- = \bar\sigma^\theta +\frac{1}{\dx^2\dt}\int_{V} (\bar \theta+\tfrac{\dt}{2}-\theta) \Div_x\sigma^x dx_1dx_2d\theta.
\]
The estimate~\eqref{eq:RT0vsAve} follows. If $\sigma$ is not $C^1$
in $\theta$, as before we can smooth $\sigma$, then in the limit
we will obtain~\eqref{eq:RT0vsAve} for all $\bar\theta$ such that
$\mu(\{\bar\theta\pm\dt/2\})=0$.
\end{proof}
\begin{corollary}\label{cor:RT}
Let $\tau(x,\theta)$ be the Raviart-Thomas extension of the fluxes $\tau^\bullet_\pm$
in $V$: then it holds
\begin{equation}\label{eq:RTvsAve}
\int_V|\tau-\bar\sigma|dxd\theta \le \sqrt{\dx^2+\dt^2}\int_{V} |\partial_1\sigma^1|+|\partial_2\sigma^2|
\end{equation}
 (for the same values of $\bar\theta$).
\end{corollary}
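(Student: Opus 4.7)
The plan is to derive the corollary directly from the preceding lemma by observing that the Raviart-Thomas extension $\tau(x,\theta)$ is pointwise a convex combination of the eight constant corner vectors for which the lemma already provides the desired estimate. Recall that within $V$ the extension factorizes as $\tau(x,\theta) = (\tau^1(x_1),\tau^2(x_2),\tau^\theta(\theta))^T$, with each component affinely interpolated between the two opposite facet values, e.g.\ $\tau^1(x_1) = (1-s_1)\tau^1_- + s_1\tau^1_+$ where $s_1 = (x_1 - \bar x_1 + \dx/2)/\dx \in [0,1]$, and analogously $s_2, s_3$ for the other two components.

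Introducing the corner vectors $\tau_{a,b,c} := (\tau^1_a, \tau^2_b, \tau^\theta_c)^T$ indexed by $(a,b,c)\in\{-,+\}^3$, together with weights $w_{a,b,c}(x,\theta) := p_a(s_1)\,p_b(s_2)\,p_c(s_3)$ where $p_+(s) = s$ and $p_-(s) = 1-s$, a direct component-by-component check (each partial sum $\sum_{b,c} w_{a,b,c}$ collapses to $p_a(s_1)$, and symmetrically for the other axes) gives
\[
\tau(x,\theta) \;=\; \sum_{(a,b,c)\in\{-,+\}^3} w_{a,b,c}(x,\theta)\,\tau_{a,b,c},
\]
with weights that are nonnegative and sum to one. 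Subtracting the constant $\bar\sigma$ and using the triangle inequality then yields the pointwise bound
\[
|\tau(x,\theta) - \bar\sigma| \;\le\; \sum_{(a,b,c)} w_{a,b,c}(x,\theta)\,|\tau_{a,b,c} - \bar\sigma|.
\]

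At this point I simply invoke estimate~\eqref{eq:RT0vsAve}, which controls each $|\tau_{a,b,c} - \bar\sigma|$ by $\frac{\sqrt{\dx^2+\dt^2}}{\dx^2\dt}\int_V(|\partial_1\sigma^1|+|\partial_2\sigma^2|)$; crucially this bound is uniform in the corner choice $(a,b,c)$. Since the weights sum to one, the same bound passes to $|\tau(x,\theta) - \bar\sigma|$ at every point of $V$, and integrating over $V$ (whose volume $\dx^2\dt$ exactly cancels the denominator) produces the claimed inequality. The caveat about excluding the countable set of $\bar\theta$ with $\mu(\{\bar\theta \pm \dt/2\})>0$ is inherited verbatim from the lemma. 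No serious obstacle is expected here: the quantitative estimate has already been absorbed by the lemma, and the only addition is the elementary fact that an affine Raviart-Thomas field on a box is a tensor-product convex combination of its eight vertex values.
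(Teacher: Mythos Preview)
Your proof is correct and matches the paper's approach: the paper simply remarks that inside $V$ each component $\tau^\bullet(x,\theta)$ is a convex combination of the two facet fluxes $\tau^\bullet_\pm$, which is exactly the componentwise version of your eight-vertex tensor-product convex combination. You have spelled out in full what the paper compresses into one line.
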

This is proven in the same way, as inside $V$, $\tau^\bullet(x,\theta)$
is a convex combination of the two fluxes $\tau^\bullet_\pm$.
Moreover, by construction since $\Div\sigma=0$, it is easy to check
that one also has $\Div\tau=0$.
The following is also immediate:
\begin{corollary}\label{cor:RT2}
Let $\tau(x,\theta)$ be the Raviart-Thomas extension of the fluxes $\tau^\bullet_\pm$
in $V$ and $\bar\tau$ the value in the middle of the cell (in other words,
\[
\bar\tau = \begin{pmatrix} \frac{\tau^1_- + \tau^1_+}{2} \\
\frac{\tau^2_- + \tau^2_+}{2} \\
\frac{\tau^\theta_- + \tau^\theta_+}{2} 
\end{pmatrix}
\]
is given by the average of the fluxes through the facets of $V$). Then
\begin{equation}\label{eq:RTcentervsAve}
|V||\bar\tau-\bar\sigma| \le \sqrt{\dx^2+\dt^2}\int_{V} |\partial_1\sigma^1|+|\partial_2\sigma^2|.
\end{equation}
\end{corollary}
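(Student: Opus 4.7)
The plan is to reuse the pointwise flux-vs-average identities already established for $\tau^\bullet_-$ in the proof of Corollary~\ref{cor:RT} and derive their counterparts for $\tau^\bullet_+$, then take the arithmetic mean. Concretely, starting from
\[
\sigma^1(\bar x_1+\tfrac{\dx}{2},x_2,\theta) = \sigma^1(x_1,x_2,\theta) + \int_{x_1}^{\bar x_1+\frac{\dx}{2}}\partial_1\sigma^1(s,x_2,\theta)\,ds,
\]
integrating in $x_1$ across $S$ and averaging in $(x_2,\theta)$, I would obtain
\[
\tau^1_+ = \bar\sigma^1 + \frac{1}{\dx^2\dt}\int_V (x_1-\bar x_1+\tfrac{\dx}{2})\partial_1\sigma^1,
\]
and analogous formulas for $\tau^2_+$ and, invoking once more $\partial_\theta\sigma^\theta=-\Div_x\sigma^x$, for $\tau^\theta_+$.

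Next, I would average the $\pm$ formulas. The key observation is that the ``constant'' $\pm\dx/2$ (respectively $\pm\dt/2$) pieces in the weights $\bar x_1 + \dx/2 - x_1$ versus $x_1 - \bar x_1 + \dx/2$ (respectively the angular analogues) appear with opposite signs in the $-$ and $+$ identities and so cancel upon averaging, leaving the centered formulas
\[
\bar\tau^1 = \bar\sigma^1 + \frac{1}{\dx^2\dt}\int_V (x_1 - \bar x_1)\partial_1\sigma^1,\quad
\bar\tau^2 = \bar\sigma^2 + \frac{1}{\dx^2\dt}\int_V (x_2 - \bar x_2)\partial_2\sigma^2,
\]
\[
\bar\tau^\theta = \bar\sigma^\theta + \frac{1}{\dx^2\dt}\int_V (\bar\theta - \theta)\Div_x\sigma^x.
\]
Since $|x_1 - \bar x_1|, |x_2 - \bar x_2| \le \dx/2$ and $|\theta - \bar\theta| \le \dt/2$ on $V$, these give the componentwise bounds
\[
|V|\,|\bar\tau^1 - \bar\sigma^1| \le \tfrac{\dx}{2}\int_V|\partial_1\sigma^1|,\quad
|V|\,|\bar\tau^2 - \bar\sigma^2| \le \tfrac{\dx}{2}\int_V|\partial_2\sigma^2|,
\]
\[
|V|\,|\bar\tau^\theta - \bar\sigma^\theta| \le \tfrac{\dt}{2}\int_V(|\partial_1\sigma^1|+|\partial_2\sigma^2|).
\]

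Finally, I would combine these three scalar estimates: summing them (i.e.\ bounding $|\bar\tau-\bar\sigma|$ by its $\ell^1$-norm, which dominates the Euclidean one) gives
\[
|V|\,|\bar\tau-\bar\sigma| \;\le\; \frac{\dx+\dt}{2}\int_V(|\partial_1\sigma^1|+|\partial_2\sigma^2|),
\]
and Cauchy--Schwarz yields $(\dx+\dt)/2 \le \sqrt{\dx^2+\dt^2}/\sqrt{2} \le \sqrt{\dx^2+\dt^2}$, establishing \eqref{eq:RTcentervsAve}. To remove the auxiliary $C^1$-in-$\theta$ regularity of $\sigma$, I would mollify in the $\theta$ variable and pass to the limit exactly as at the end of the proof of the preceding lemma, so that the estimate holds for all $\bar\theta$ with $\mu(\{\bar\theta\pm\dt/2\})=0$. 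There is no substantial obstacle here: the entire argument is bookkeeping on the identities already derived, and the only mildly delicate point is the sign tracking when combining $\tau^\bullet_-$ with $\tau^\bullet_+$, which becomes transparent once the ``$+$'' formula has been written out explicitly.
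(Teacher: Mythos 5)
Your proof is correct, but it is not the route the paper takes. The paper disposes of this corollary in one line: $\bar\tau$ is the midpoint of the two vectors $(\tau^1_-,\tau^2_-,\tau^\theta_-)$ and $(\tau^1_+,\tau^2_+,\tau^\theta_+)$, each of which is covered by the preceding lemma (the case of arbitrary $(a,b,c)\in\{-,+\}^3$ in~\eqref{eq:RT0vsAve}), so the bound for $\bar\tau$ follows immediately from the triangle inequality, i.e.\ from the convexity of the ball of radius $\frac{1}{|V|}\sqrt{\dx^2+\dt^2}\int_V(|\partial_1\sigma^1|+|\partial_2\sigma^2|)$ around $\bar\sigma$ --- no new computation at all. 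You instead go back inside the proof of the lemma, write out the explicit ``$+$'' identities, and average the $\pm$ representations at the level of the integrands, so that the constant offsets $\pm\dx/2$ (resp.\ $\pm\dt/2$) in the weights cancel and the remaining centered weights are bounded by $\dx/2$ (resp.\ $\dt/2$). Your identities, the sign bookkeeping, the use of $\Div\sigma=0$ to trade $\partial_\theta\sigma^\theta$ for $-\Div_x\sigma^x$, and the final $\ell^1$-versus-Euclidean and Cauchy--Schwarz steps all check out, as does your remark on removing the auxiliary $C^1$-in-$\theta$ smoothness by mollification for the admissible values of $\bar\theta$. What your longer computation buys is a strictly sharper constant, $(\dx+\dt)/2\le\sqrt{(\dx^2+\dt^2)/2}$, in place of $\sqrt{\dx^2+\dt^2}$; what the paper's argument buys is brevity, since the lemma was already stated for all eight sign choices precisely so that this corollary (like Corollary~\ref{cor:RT}) would follow by convex combination.
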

\subsection{Consistent discretization of the energy $F$}

We now are in a position to define
almost consistent approximations of $F$. We will build
a discrete approximation which enjoys a sort of
discrete-to-continuum  $\Gamma$-convergence
property to the limiting functional $F$.

Assume to simplify $\Om$ is a convex set\footnote{This is not really 
important, as one could approximate $\sigma$ by smooth function
only inside $\Om$ and then let the corresponding set
invade $\Om$ in the limit.}, and even a rectangle $[0,a]\times [0,b]$,
$a,b>0$, which further simplifies our notation.

Let $u\in BV(\Om)$ and $\sigma_u$ be admissible for
$u$, such that $F(u)=\int_{\OS} h(\theta,\sigma_u)<\infty$
and first, for $\e>0$ fixed,  $\sigma_\e$ by convolution as in the proof
of Proposition~\ref{thm:smoothapprox}. In particular,
\[
\int_{\OS} |\partial_1\sigma^1_\e|+|\partial_2\sigma^2_\e|
\le \frac{c}{\e}\int_{\OS} |\sigma^1_u|+|\sigma^2_u|
\]
where $c=2\pi\int_{B_1}|\nabla\rho|dx$
depends only on the convolution kernel $\rho$.
Fix $\dx,\dt$ small enough, assume $\dt=2\pi/N_\theta$ for some
integer $N_\theta$, and consider all the volumes
$V_{i,j,k}$, defined in~\eqref{eq:defVijk} (with $S_{i,j}$ defined by~\eqref{defSij}),
%ICI 
%$V_{i,j,k}=[(i-\ha)\dx,(i+\ha)\dx)\times [(j-\ha)\dx,(j+\ha)\dx)\times [(k-\ha)\dt,(k+\ha)\dt)$
and which are inside $\OS$, for $(i,j,k)\in\J$~\eqref{eq:defJ}.
%i=1,\dots,N_1-1$, $j=1,\dots,N_2-1$, $k=0,\dots N_\theta-1$.
We define a Raviart-Thomas vector field from the (averaged) fluxes
 of $\sigma_\e$ through the facets of the volumes:
$\sigma^1_{i-\ha,j,k}$ through the facets $\F^1_{i-\ha,j,k}$,
%{(i-\ha)\dx\}\times [(j-\ha)\dx,(j+\ha)\dx)\times [(k-\ha)\dt,(k+\ha)\dt)$, $i=1,\dots,N_1$,
$\sigma^2_{i,j-\ha,k}$ through the facets $\F^2_{i,j-\ha,k}$,
%$[(i-\ha)\dx,(i+\ha)\dx)\times\{(j-\ha)\dx\}\times  [(k-\ha)\dt,(k+\ha)\dt)$, $j=1,\dots,N_2$,
and 
$\sigma^\theta_{i,j,k-\ha}$ through $\F^\theta_{i,j,k-\ha}$,
%$[(i-\ha)\dx,(i+\ha)\dx)\times [(j-\ha)\dx,(j+\ha)\dx)\times \{(k-\ha)\dt\}$
see~\eqref{eq:defF1}, \eqref{eq:defF2}, \eqref{eq:defFt}.
The latter flux is well-defined up to an infinitesimal vertical translation
of the origin of the discretization in $\theta$ (without loss
of generality we thus assume it is well defined).
The Raviart-Thomas field inside the cube is defined then
as in~\eqref{eq:RaTho}.
We also define $\bar\sigma_{i,j,k}=(\dx^{-2}\dt^{-1})\int_{V_{i,j,k}}\sigma_\e$
as the average of $\sigma_\e$ in $V_{i,j,k}$,
and let $\hat\sigma_{i,j,k}$ be defined by~\eqref{defhatsigma},
which corresponds to averaging the fluxes of the facets, or equivalently
to consider the value of the Raviart-Thomas extension in the middle
of the volume $V_{i,j,k}$.
% \begin{equation}\label{defhatsigma}
% \hat\sigma_{i,j,k} = \ha(\sigma^1_{i+\ha,j,k}+\sigma^1_{i-\ha,j,k},
% \sigma^2_{i,j+\ha,k}+\sigma^2_{i,j-\ha,k},\sigma^\theta_{i,j,k+\ha}+\sigma^\theta_{i,j,k-\ha}).
% \end{equation}

From Corollary~\ref{cor:RT2}, letting
\[
e_{i,j,k} = \hat\sigma_{i,j,k}-\bar \sigma_{i,j,k},
\]
one has
\[
\dx^2\dt\sum_{i,j,k}  |e_{i,j,k}|
\le c\frac{\sqrt{\dx^2+\dt^2}}{\e}\int_{\OS}|\sigma^x_u|.
\]
Moreover by Lemma~\ref{lem:controlmean}, one has
(we denote, for every $k$, $\theta_{k}=k \dt$
and $\theta_{k+\ha}=(k+\ha) \dt$)
\[
\dx^2\dt\sum_{i,j,k} \bar h(\bar\sigma_{i,j,k}\cdot\utheta_{k},
\cos\tfrac{\dt}{2}\bar\sigma^\theta_{i,j,k})
\le 
(1+C\dt^2) \int_{\OS} h(\theta,\sigma_u)
\]
(using that $\int_{\OS}h(\theta,\sigma_\e)\le\int_{\OS} h(\theta,\sigma_u)$,
\textit{cf} the proof of Prop.~\ref{thm:smoothapprox}).

Eventually, letting  now
\[
e'_{i,j,k} = e_{i,j,k} + (1-\cos\tfrac{\dt}{2})\bar\sigma^{\theta}_{i,j,k},
\]
which is such that
\begin{multline}\label{eq:errorcontrol}
\dx^2\dt\sum_{i,j,k} |e'_{i,j,k}|
\le
c\frac{\sqrt{\dx^2+\dt^2}}{\e}\int_{\OS}|\sigma^x_u| + \dt^2 \int_{\OS}|\sigma^\theta_u|
\\\le c\frac{\sqrt{\dx^2+\dt^2}}{\e}\int_{\OS}|\sigma_u|
\le c\frac{\sqrt{\dx^2+\dt^2}}{\gamma\e} F(u).
\end{multline}

We deduce that we can find an center-averaged Raviart-Thomas field 
$\hat\sigma$ and an error term $e'$
such that~\eqref{eq:errorcontrol} holds and
% \[
% \int_{\Om^\dx\times\Sp^1} \bar h((\sigma_{RT}-e')\cdot\utheta^{\dt},(\sigma_{RT}-e')^\theta)dxd\theta \le (1+C\dt^2) F(u)
% \]
%\begin{multline}
\[
\sum_{i,j,k} \bar h((\hat\sigma_{i,j,k}-e'_{i,j,k})\cdot\utheta_{k},(\hat\sigma_{i,j,k}-e'_{i,j,k})^\theta)
\le (1+C\dt^2) F(u)
\]
%\end{multline}
where $\theta^{\dt}=\sum_k \theta_k\chi_{\{k\dt,(k+1)\dt\}}$.
In particular if we introduce, for $\delta=(\delta_x,\delta_t)$ small,
the inf-convolution
\begin{equation}\label{eq:infconv}
\bar h_\delta(s,t) =  \min_{s',t'} \bar h(s-s',t-t') + \frac{\gamma}{(\dx^2+\dt^2)^{1/4}}\sqrt{s'^2+t'^2}
\end{equation}
we find that
\[
\dx^2\dt\sum_{i,j,k} \bar h_\delta(\hat\sigma_{i,j,k}\cdot\utheta_{k},\hat\sigma_{i,j,k}^\theta)
 \le (1+C\dt^2+ \tfrac{c}{\e}(\dx^2+\dt^2)^{1/4}) F(u).
\]
Moreover, one easily sees that
\begin{equation}\label{eq:sigmacone}
\hat\sigma_{i,j,k}^x \in \R_+\utheta_{k-\ha}+\R_+\utheta_{k+\ha}.
\end{equation}

\begin{remark}\textup{If $\bar h$ is $L$-Lipschitz (as it is the case
when $f$ as growth one, for instance if $f(t)=\gamma\sqrt{1+t^2}$),
then the inf-convolution step is not necessary. One directly obtains
\[
\dx^2\dt\sum_{i,j,k} \bar h(\hat\sigma_{i,j,k}\cdot\utheta_{k},\hat\sigma_{i,j,k}^\theta)
 \le \Big(1+C\dt^2+ \tfrac{cL}{\gamma\e}\sqrt{\dx^2+\dt^2}\Big)F(u).
\]
}
\end{remark}

Now, we check the consistency between $\hat\sigma$ and $u$..
By construction,  
$\dx\dt\sum_k \sigma^1_{i+\ha,j,k}$ is the flux of $Du^\perp$
through the edge $\{i+\ha\dx\}\times [(j-\ha)\dx,(j+\ha)\dx]$ in $\Omega$, hence
it is equal to the value $u((i+\ha)\dx,(j+\ha)\dx)-u((i+\ha)\dx,(j-\ha)\dx)$.
Accordingly, if we let, for
all $i,j$, $u_{i+\ha,j+\ha}^{\delta}:=u((i+\ha)\dx,(j+\ha)\dx)$,
we obtain that~\eqref{eq:compatsigmau} holds (with $u$ replaced
with $u^\delta$).
% \begin{equation}\label{eq:compatsigmau}
% \begin{cases}
% \dt\sum_k \sigma^1_{i+\ha,j,k} =
%  \frac{1}{\dx} \left(u^\delta_{i+\ha,j+\ha}-u^\delta_{i+\ha,j-\ha}\right)
% \\[2mm]
% \dt\sum_k \sigma^2_{i,j+\ha,k} = 
% -\frac{1}{\dx} \left(u^\delta_{i+\ha,j+\ha}-u^\delta_{i-\ha,j+\ha}\right).
% \end{cases}
% \end{equation}

Eventually we observe that the free divergence condition simply translates
as~\eqref{eq:freediv}
% \begin{equation}\label{eq:freediv}
% \frac{\sigma^1_{i+\ha,j,k}-\sigma^1_{i-\ha,j,k}}{\dx}+
% \frac{\sigma^2_{i,j+\ha,k}-\sigma^2_{i,j-\ha,k}}{\dx}+
% \frac{\sigma^\theta_{i,j,k+\ha}-\sigma^\theta_{i,j,k-\ha}}{\dt}=0
% \end{equation}
for all admissible $i,j,k$, as this is the global flux of
$\sigma_\e$ across the boundaries of $V_{i,j,k}$.

It is now easy to deduce the following upper approximation result:
\begin{proposition}
Let $u\in BV(\Om)$, $\sigma$ be admissible for $u$ and such that
\[ F(u)=\int_{\OS} h(\theta,\sigma)<\infty.\]
 Then  for $\delta=(\dx,\dt)\to 0$ one can find a discrete field 
$(\sigma^1_{i+\ha,j,k},\sigma^2_{i,j+\ha,k},\sigma^{\theta}_{i,j,k+\ha})$ and
a discrete image $u^\delta_{i+\ha,j+\ha}$ with 
\begin{equation}\label{eq:limitu}
\sum_{i,j} u^\delta_{i+\ha,j+\ha}\chi_{[i\dx,(i+1)\dx)\times[j\dx,(j+1)\dx)}\to u
\end{equation}
(strongly in $L^2(\Om)$)
and such that for all $i,j$, \eqref{eq:compatsigmau} holds, for all
$i,j,k$, \eqref{eq:sigmacone} and \eqref{eq:freediv} hold, and:
\begin{equation}
\limsup_{\delta\to 0}
\dx^2\dt\sum_{i,j,k} \bar h_\delta(\hat\sigma_{i,j,k}\cdot\utheta_{k},\hat\sigma_{i,j,k}^\theta)
\le F(u)
\end{equation}
where $\hat\sigma$ is defined by~\eqref{defhatsigma}, and
where $\bar h_\delta$ is defined in~\eqref{eq:infconv} (or is $\bar h$ in case
it is Lipschitz).
\end{proposition}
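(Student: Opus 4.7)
The proposition is essentially the assembly of everything developed in the preceding pages, so my plan is to explain how to glue the pieces together rather than redo any computation.

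The plan is to start from an admissible measure $\sigma$ realising $F(u)<\infty$ and regularise it in the spatial variable by the rotationally symmetric convolution used in the proof of Proposition~\ref{thm:smoothapprox}. This produces, for every $\e>0$, a measure $\sigma_\e$ which is smooth in $x$ (so that $x\mapsto\sigma_\e$ may be viewed as a $\M^1(\Sp^1;\R^3)$-valued smooth function), is divergence-free, is admissible for a smooth function $u_\e$ with $u_\e\to u$ in $L^1(\Om)$ (hence in $L^2(\Om)$ after a harmless truncation, using the Sobolev embedding $BV(\Om)\hookrightarrow L^2(\Om)$ in dimension two), and satisfies $\int_{\OS}h(\theta,\sigma_\e)\le F(u)$ as well as the $x$-derivative bound $\int_{\OS}|\partial_1\sigma_\e^1|+|\partial_2\sigma_\e^2|\le c\|\sigma\|/\e$.

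Next, for each $\delta=(\dx,\dt)$ with $\dt=2\pi/N_\theta$, I define the discrete field $(\sigma^1_{i+\ha,j,k},\sigma^2_{i,j+\ha,k},\sigma^\theta_{i,j,k+\ha})$ as the averaged fluxes of $\sigma_\e$ through the facets $\F^1_\bullet$, $\F^2_\bullet$, $\F^\theta_\bullet$ described just before Corollary~\ref{cor:RT}, where the angular facets are well-defined for a.e.~origin of the angular grid by the disintegration argument. The discrete image is simply $u^\delta_{i+\ha,j+\ha}:=u_\e((i+\ha)\dx,(j+\ha)\dx)$. Then~\eqref{eq:compatsigmau} follows from the divergence theorem applied to $\sigma_\e$ on the slab $S_{i,j}\times\Sp^1$, using the admissibility condition relating the marginal of $\sigma_\e$ to $Du_\e^\perp$; \eqref{eq:freediv} is the divergence theorem on the cube $V_{i,j,k}$; and the cone membership~\eqref{eq:sigmacone} is a direct consequence of Lemma~\ref{lem:controlmean} applied on each slab in $\theta$ since $\hat\sigma_{i,j,k}^x$ is then a convex combination of two averages, each lying in the corresponding half-line.

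For the energy, I would combine Lemma~\ref{lem:controlmean} (angular averaging, yielding the factor $1+C\dt^2$ together with the rescaled angular component $\cos\tfrac{\dt}{2}\bar\sigma^\theta$) with Corollary~\ref{cor:RT2} (replacing the cell average $\bar\sigma_{i,j,k}$ by the center value $\hat\sigma_{i,j,k}$ of the Raviart--Thomas extension, with error controlled by $\sqrt{\dx^2+\dt^2}\int_{V_{i,j,k}}|\partial_1\sigma_\e^1|+|\partial_2\sigma_\e^2|$). Grouping both discrepancies into the error $e'_{i,j,k}$ of~\eqref{eq:errorcontrol}, the one-homogeneity of $\bar h$ together with the definition~\eqref{eq:infconv} of the inf-convolution absorbs $e'$ into the energy at the cost of the additional factor $\gamma(\dx^2+\dt^2)^{-1/4}\|e'\|_1$, giving
\[
\dx^2\dt\sum_{i,j,k}\bar h_\delta(\hat\sigma_{i,j,k}\cdot\utheta_k,\hat\sigma^\theta_{i,j,k})\le\bigl(1+C\dt^2+\tfrac{c}{\e}(\dx^2+\dt^2)^{1/4}\bigr)F(u).
\]
A diagonal argument then finishes the proof: pick $\e=\e(\delta)\to 0$ slowly enough that $(\dx^2+\dt^2)^{1/4}/\e(\delta)\to 0$ (for instance $\e(\delta)=(\dx^2+\dt^2)^{1/8}$), so that the prefactor tends to $1$, while $u_{\e(\delta)}\to u$ in $L^2(\Om)$ and the piecewise constant sampling in~\eqref{eq:limitu} converges to $u_{\e(\delta)}$ with an $\e(\delta)$-dependent rate that can be accommodated by further slowing $\e(\delta)$ if necessary.

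The main obstacle is the balance in the diagonal argument: the regularisation scale $\e$ must be large compared to $(\dx^2+\dt^2)^{1/4}$ (to keep the spatial derivatives of $\sigma_\e$, growing like $1/\e$, compatible with the Raviart--Thomas error estimate), yet small enough for $u_\e\to u$ and for the sampled $u^\delta$ to converge in $L^2$. The inf-convolution $\bar h_\delta$ is precisely what allows to absorb the residual error $e'$ without losing the variational upper bound, and it is the reason the statement is stated for $\bar h_\delta$ rather than for $\bar h$ itself (the comment after the proposition indicates that when $\bar h$ is Lipschitz the inf-convolution is unnecessary, so this technicality disappears for energies with linear growth such as $f_1$ and $f_2$).
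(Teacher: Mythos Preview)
Your proposal is correct and follows essentially the same route as the paper: spatial mollification $\sigma_\e$ as in Proposition~\ref{thm:smoothapprox}, discrete fluxes through the facets defining the Raviart--Thomas field, the combination of Lemma~\ref{lem:controlmean} and Corollary~\ref{cor:RT2} to control the energy of $\hat\sigma$ via the error term $e'$ of~\eqref{eq:errorcontrol}, the inf-convolution~\eqref{eq:infconv} to absorb that error, and finally a diagonal choice $\e=\e(\delta)$. The paper leaves the diagonal step implicit (``it is now easy to deduce\dots''), whereas you spell it out; otherwise the arguments coincide.
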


To show that the discretization is consistent, we must now show a similar
lower bound: namely that given any $u$ and $u^\delta,\sigma,\hat\sigma$
which satisfy~\eqref{defhatsigma},
\eqref{eq:sigmacone}, \eqref{eq:compatsigmau},
and~\eqref{eq:limitu} (weakly, for instance as distributions), then one has
\begin{equation}\label{eq:gliminf}
\liminf_{\delta\to 0}
\dx^2\dt\sum_{i,j,k} \bar h_\delta(\hat\sigma_{i,j,k}\cdot\utheta_{k},\hat\sigma_{i,j,k}^\theta)
\ge F(u).
\end{equation}

%%% THIS LATER:
% We will show a bit more, which is that if
% \begin{equation}\label{eq:upperbound}
% \sup_{\delta} 
% \dx^2\dt\sum_{i,j,k} \bar
% h(\hat\sigma_{i,j,k}\cdot\utheta_{k},\hat\sigma_{i,j,k}^\theta)
% <+\infty
% \end{equation}
% then one can build a $u^\delta$ with~\eqref{eq:compatsigmau},
% the convergence in~\eqref{eq:limitu} and~\eqref{eq:gliminf}.
% Also, we will prove~\eqref{eq:gliminf} with $\bar h_\delta$ replaced
% with $\bar h\ge \bar h_\delta$, for simplicity. This is seemingly
% weaker, however it is not hard to show that if it holds and the bound~\eqref{eq:upperbound}
% is true, then also~\eqref{eq:gliminf} holds.

A first obvious remark is that the field
\[
\hat\sigma^\delta := \sum_{i,j,k} \hat\sigma_{i,j,k}
%\chi_{[(i-\ha)\dx,(i+\ha)\dx)\times[(j-\ha)\dx,(j+\ha)\dx)\times[(k-\ha)\dt,(k+\ha)\dt)}
\chi_{V_{i,j,k}}
\]
is bounded in measure, and hence, up to subsequences, converges
(weakly-$*$) to
a measure $\sigma$. It is then easy to deduce from~\eqref{eq:sigmacone}
and the convexity of $\bar h$ that
\[
\int_{\OS} h(\theta,u)\le\liminf_{\delta\to 0}
\liminf_{\delta\to 0}
\dx^2\dt\sum_{i,j,k} \bar h_\delta(\hat\sigma_{i,j,k}\cdot\utheta_{k+\ha},\hat\sigma_{i,j,k}^\theta).
\]
One  can also check that $\Div \sigma=0$ by passing to the
limit in~\eqref{eq:freediv} (after a suitable integration against
a smooth test function, exactly as in~\eqref{eq:consistentderiv} below).
Hence it is enough to show that the limiting $\sigma$ is compatible with $u$.

But this is quite obvious from~\eqref{eq:compatsigmau}, which one can
integrate against a smooth test function, then ``integrate by part''
before passing to the limit. More precisely, for $\varphi\in C^\infty_c(\Om)$,
one has (dropping the superscripts $\delta$
and denoting $\varphi_{i,j}=(1/\dx^2)\int_{S_{i,j}} \varphi(x) dx$):
\begin{multline}
\label{eq:consistentderiv}
\begin{aligned}
\hspace{-2mm}\int_{\OS}\varphi(x) \hat\sigma^1 dx d\theta
&=\sum_{i,j,k} \hat\sigma^1_{i,j,k}\int_{V_{i,j,k}}\varphi(x) dx d\theta 
\\ &=\dx^2\dt \sum_{i,j} \varphi_{i,j} \sum_k \dt \frac{\sigma^1_{i+\ha,j,k}+\sigma^1_{i-\ha,j,k}}{2}
\\ = &\, \frac{\dx^2\dt}{2} \sum_{i,j} \varphi_{i,j} \left(\frac{u_{i+\ha,j+\ha}-u_{i+\ha,j-\ha}}{\dx}
+\frac{u_{i-\ha,j+\ha}-u_{i-\ha,j-\ha}}{\dx}\right)
\\ & =-\dx^2\dt \sum_{i,j} \frac{u_{i+\ha,j+\ha}+u_{i-\ha,j+\ha}}{2} \frac{\varphi_{i,j+1}-\varphi_{i,j}}{\dx}
\end{aligned}
\\\to -\int_\Om u(x)\partial_2\varphi(x) dx
\end{multline}
as $\delta\to 0$.

Eventually, we need to show
a compactness property, which is that if
 \begin{equation}\label{eq:upperbound}
 \sup_{\delta} 
 \dx^2\dt\sum_{i,j,k} \bar
 h(\hat\sigma_{i,j,k}\cdot\utheta_{k},\hat\sigma_{i,j,k}^\theta)
 <+\infty
 \end{equation}
the discrete image $u^\delta$ which is recovered from~\eqref{eq:compatsigmau}
(up to a constant) converges to a $u(x)$, $x\in\Om$
(in a weak sense which will be made clear). The point here is that
a priori, from~\eqref{eq:upperbound} and~\eqref{eq:controlbelow}, one
has only
\begin{multline}\label{eq:oscillatingTV}
\sup_{\delta} \dx^2 \sum_{i,j}
\Bigg( \left(\frac{u^\delta_{i+\ha,j+\ha}-u^\delta_{i+\ha,j-\ha}}{\dx}
+\frac{u^\delta_{i-\ha,j+\ha}-u^\delta_{i-\ha,j-\ha}}{\dx}\right)^2
\\+ \left(\frac{u^\delta_{i+\ha,j-\ha}-u^\delta_{i-\ha,j-\ha}}{\dx}
+\frac{u^\delta_{i+\ha,j+\ha}-u^\delta_{i-\ha,j+\ha}}{\dx}\right)^2
\Bigg)^{\ha} <+\infty
\end{multline}
so that the discrete total variation of $u^\delta$ is
a priori not well controlled. However, one can easily check
that the kernel of the operator which appears in the energy~\eqref{eq:oscillatingTV} is two-dimensional,
and made of the oscillating discrete images
\begin{equation}\label{eq:kernel}
v_{i-\ha,j-\ha}^\delta = \alpha + \beta (-1)^{i+j},
\end{equation}
$\alpha,\beta\in \R^2$. Hence it is possible to show that
one can decompose
$u^\delta$ as a sum of a non-oscillating function with
zero average $\bar u^\delta$ and an oscillation $v^\delta$,
and obtain a strong control on the discrete total variation of
$\bar u^\delta$. Therefore one easily deduce that any suitably built
continuous extension of $\bar u^\delta$ will converge to some
$u$ strongly in $L^p(\Om)$, for any $p<2$ (as $BV(\Om)$ is compactly
embedded in such spaces), and weakly in $L^2(\Om)$.

In addition, any control on the average of $u^\delta$ and on its oscillation
(which cannot
be given by~\eqref{eq:oscillatingTV} and has to come from other
terms in the energy, such as a boundary condition or a penalization:
note that it is enough to control two adjacent pixels)
will ensure in addition that $v^\delta$ remains bounded and
converges (only weakly in $L^p(\Om)$,
 if the control is only on the $L^p$ norm, however in this case it
is obvious that the oscillating term in~\eqref{eq:kernel} goes to zero
and $v^\delta$ can only go to a constant).

To sum up, we have shown the following.
\begin{proposition}
For $\delta \to 0$, assume we are given $\sigma^\delta$, $u^\delta$
and $\hat\sigma^\delta$ with~\eqref{defhatsigma}, \eqref{eq:compatsigmau}, which
in addition satisfy~\eqref{eq:sigmacone} and \eqref{eq:freediv},
and~\eqref{eq:upperbound}. Then,
up to an oscillating function $v^\delta$ of the form~\eqref{eq:kernel},
there is $u\in BV(\Om)$ such that
$u^\delta\to u$, and~\eqref{eq:gliminf} holds.
\end{proposition}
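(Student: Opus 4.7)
The argument splits into compactness, identification of the limit as admissible, and the liminf inequality.

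\textbf{Compactness.} By~\eqref{eq:controlbelow} together with~\eqref{eq:infconv}, one has $\bar h_\delta(s,t)\ge (\gamma-o(1))\sqrt{s^2+t^2}$, so the uniform bound~\eqref{eq:upperbound} implies $\dx^2\dt\sum_{i,j,k}|\hat\sigma_{i,j,k}|\le C$. Hence the piecewise-constant extension $\tilde\sigma^\delta:=\sum_{i,j,k}\hat\sigma_{i,j,k}\chi_{V_{i,j,k}}$ is bounded in $\M^1(\OS;\R^3)$ and has a weak-$*$ limit $\sigma$ along a subsequence. For the image, the control on $\hat\sigma^x$ together with~\eqref{eq:compatsigmau} bounds exactly the seminorm in~\eqref{eq:oscillatingTV}; since its kernel is the two-dimensional space of checkerboards~\eqref{eq:kernel}, we split $u^\delta=\bar u^\delta+v^\delta$ with $v^\delta$ in this kernel and $\bar u^\delta$ of zero mean and uniformly bounded discrete total variation. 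By the usual discrete-to-continuum compactness in $BV$, a piecewise-constant interpolant of $\bar u^\delta$ converges strongly in $L^p(\Om)$ for every $p<2$ to some $u\in BV(\Om)$.

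\textbf{Passing to the limit in the constraints.} Summation by parts against a smooth $\phi\in C_c^\infty(\OS)$ turns~\eqref{eq:freediv} into $\Div\sigma=0$ in the limit. Testing~\eqref{eq:compatsigmau} against $\vp\in C_c^\infty(\Om;\R^2)$ and applying the two-cell averaging identity illustrated in~\eqref{eq:consistentderiv} yields $\int_{\Sp^1}\sigma^x\,d\theta=Du^\perp$, so $\sigma$ is admissible for $u$ in~\eqref{eq:deFu}. Moreover, the cone condition~\eqref{eq:sigmacone} (writing $\hat\sigma_{i,j,k}^x=a\,\utheta_{k-\ha}+b\,\utheta_{k+\ha}$ with $a,b\ge 0$) gives the quantitative estimate
\[
\bigl|\hat\sigma_{i,j,k}^x-(\hat\sigma_{i,j,k}^x\cdot\utheta_k)\utheta_k\bigr|\le\tan(\tfrac{\dt}{2})\bigl(\hat\sigma_{i,j,k}^x\cdot\utheta_k\bigr)=O(\dt)\,|\hat\sigma_{i,j,k}|.
\]
Combined with $|\utheta-\utheta_k|=O(\dt)$ on each cell, this shows that the transverse component of $\tilde\sigma^{\delta,x}$ (orthogonal to $\utheta$ in the continuous coordinate) vanishes in $\M^1$; consequently $\sigma^x(x,\theta)\in\R_+\utheta$ for $|\sigma|$-a.e.~$(x,\theta)$, and $h(\theta,\sigma)=\bar h(\sigma^x\cdot\utheta,\sigma^\theta)$.

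\textbf{Liminf inequality.} Fix $\eta>0$ and let $\bar h_\eta$ be defined exactly as in~\eqref{eq:infconv} with $\eta$ in place of $(\dx^2+\dt^2)^{1/4}$; it is $(\gamma/\eta)$-Lipschitz, convex, one-homogeneous, and $(\theta,p)\mapsto \bar h_\eta(p^x\cdot\utheta,p^\theta)$ is jointly continuous. Since $\gamma/(\dx^2+\dt^2)^{1/4}\to+\infty$, we have $\bar h_\delta\ge \bar h_\eta$ for all $\delta$ small enough. Replacing $\utheta_k$ by $\utheta$ inside $\bar h_\eta$ costs at most $(\gamma/\eta)\,\dt\,\dx^2\dt\sum|\hat\sigma|\to 0$. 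The resulting functional $\tau\mapsto\int_{\OS}\bar h_\eta(\tau^x\cdot\utheta,\tau^\theta)$ is weak-$*$ lower semicontinuous on $\M^1(\OS;\R^3)$ by the classical integral representation~\eqref{eq:intdualh}, see \cite{BouchitteValadier,Rockafellar71}; hence
\[
\int_{\OS}\bar h_\eta(\sigma^x\cdot\utheta,\sigma^\theta)\le\liminf_{\delta\to 0}\dx^2\dt\sum_{i,j,k}\bar h_\delta(\hat\sigma_{i,j,k}\cdot\utheta_k,\hat\sigma_{i,j,k}^\theta).
\]
Letting $\eta\downarrow 0$, monotone convergence together with the identity $\bar h(\sigma^x\cdot\utheta,\sigma^\theta)=h(\theta,\sigma)$ established above gives $\int_{\OS}h(\theta,\sigma)\le\liminf_\delta(\dots)$, which is bounded below by $F(u)$ since $\sigma$ is admissible. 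The main obstacle is the simultaneous control of two discretization errors---the inf-convolution slack in $\bar h_\delta$ and the angular/cone discrepancy between $\hat\sigma_{i,j,k}^x$ and $\R_+\utheta_k$---by passing through the intermediate Lipschitz regularization $\bar h_\eta$ at a fixed $\eta$ before sending $\eta\to 0$.
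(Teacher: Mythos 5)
Your proof is correct and follows essentially the same route as the paper: weak-$*$ compactness of the piecewise-constant field, passage to the limit in~\eqref{eq:freediv} and~\eqref{eq:compatsigmau} via~\eqref{eq:consistentderiv}, the checkerboard-kernel decomposition $u^\delta=\bar u^\delta+v^\delta$ from~\eqref{eq:oscillatingTV}--\eqref{eq:kernel}, and lower semicontinuity from convexity of $\bar h$ together with the cone condition~\eqref{eq:sigmacone}. Your intermediate Lipschitz regularization $\bar h_\eta$ simply makes explicit the step the paper dismisses as ``easy to deduce,'' and does so correctly.
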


\begin{remark}\textup{In practice, we did not use the inf-convolutions
$\bar h_\delta$ (only $\bar h$) in our discrete
scheme.
Also, we replaced the constraint~\eqref{eq:sigmacone}
with the stronger constraint $\hat{\sigma}^x_{i,j,k}\in \R_+\utheta_k$,
after having experimentally
observed that there was no qualitative difference in the output.
It seems the results we compute are still consistent with what is expected
from the energy.}
\end{remark}

\section{Smirnov's theorem in $\OS$}\label{app:Smirnov}

In this whole paper $\Om\subset\R^2$ is assumed to be a Lipschitz set.
In particular,
locally its boundary can be represented as the subgraph 
$\{(x,y): y<h(x)\}$ of a Lipschitz function $h$. Consider
a ball $B$ where this representation holds
and assume first $h$ is $C^1$, then one can extend
in $B$ a bounded Radon measure $\sigma$ with $\Div \sigma=0$ into
$\tilde \sigma$ defined (for $\psi\in C_c^0(B;\R^2)$)
\[
\int_B \tilde\sigma\cdot \psi := 
\int_{B\cap \Om}\sigma\cdot \left(\psi(x,y)-
\begin{pmatrix}1 & 2h'(x)\\ 0 & -1\end{pmatrix}\psi(x,2h(x)-y)\right).
\]
Then, it is standard that $\Div\tilde\sigma=0$ in $B$, indeed, if
$\vp\in C_c^1(B)$, one has that
\[
\int_B \tilde\sigma\cdot \nabla\vp = 
\int_{B\cap\Om} \sigma\cdot\nabla\left[\vp(x,y)-\vp(x,2h(x)-y)\right].
\]
The function $\vp^s(x,y):=\vp(x,y)-\vp(x,2h(x)-y)$ is $C^1$ and vanishes on $\partial\Om$,
hence this expression is zero: Indeed if for $\tau>0$ one lets
$\vp^s_\tau(x,y)=S_\tau(\vp^s(x,y))$ where $S_\tau\in C^\infty(\R)$
is a smooth approximation of a ``shrinkage operator'':
\[
S_\tau(t) =\begin{cases} t-\tau 
& \textup{ if } t\ge \tfrac{3}{2}\tau\,,
\\ 0 &   \textup{ if } |t|< \tfrac{1}{2}\tau\,,
\\ t+\tau & \textup{ if } t\le - \tfrac{3}{2}\tau\,,
\end{cases}
\]
with smooth and $1$-Lipschitz interpolation in $\pm[\tau/2,3\tau/2]$,
then $\vp^s_\tau\in C_c^1(B\cap\Om)$ so that
\[
\int_{B\cap \Om} \sigma \cdot\nabla \vp^s_\tau=0
\]
and, using $\nabla \vp^s_\tau = S'_\tau(\vp^s)\nabla \vp^s$
\[
\int_{B\cap\Om}\sigma\cdot (\nabla \vp^s-\nabla\vp^s_\tau)
\le C |\sigma|(B\cap\Om\cap\{0<|\vp^s|< 3\tau/2\})\to 0
\]
as $\tau\to 0$, showing our claim. Hence $\Div\tilde\sigma=0$.
If $h$ is not $C^1$ but just Lipschitz, one can approximate it
from below by smooth functions $h_n$, build in such a way a sequence
$\sigma_n$ of extensions of $\sigma\raisebox{-2pt}{$|$} {}_{\{y<h_n(x)\}}$
and pass to the limit to deduce that the extension still exists.

Using cut-off functions, one can therefore assume that $\sigma$ can be extended
into a field $\tilde\sigma$ which is a measure in $\R^2$ with
free divergence in a neighborhood of $\Om$.

A similar construction
would allow to extend a field $\sigma\in\M(\Om\times \R^2;\R^4)$ 
to $\M(\R^4;\R^4)$ with free divergence (either in a neighborhood
or $\Om\times \R^4$, or even everywhere).
This remark allows to localize Smirnov's theorems in~\cite{Smirnov}.

Consider indeed now a free divergence field $\sigma\in \M(\OS;\R^3)$.
It can be seen, after extension, as a field in $\M(\R^2\times\R^2;\R^4)$ with
$\spt\sigma\subseteq \R^2\times \Sp^1$.

As in Smirnov's paper~\cite{Smirnov}, for $l>0$
we introduce $\Cl$ the set of oriented curves $\gamma$
in $\R^4$ with length $l$, with the topology corresponding
to the weak convergence of the measures $\tau_\gamma\H^1\restr\gamma$.
Then, thanks to~\cite[Theorem A]{Smirnov},
$\sigma$ can be decomposed as
\[
\sigma = \int_{\Cl} \lambda d\mu(\lambda), \quad
|\sigma| = \int_{\Cl} |\lambda| d\mu(\lambda), 
\]
for some measure $\mu$ on $\Cl$. Moreover thanks to Remark~5 in~\cite{Smirnov},
$\mu$-a.e.~curve in the decomposition lies in $\R^2\times\Sp^1$.

%extremal \textit{elementary solenoids}~\cite{Smirnov} or finite
%length curves connecting two endpoints. 
%and (cf~\cite[Remark 5]{Smirnov}) shows that $\mu$-a.e.~all charges $\lambda$
%which appear in the decompositions

For this work, it is enough to consider $l=1$. Moreover,
if we restrict then all these measures to $\OS$ (and take for $\mu$
the corresponding marginal), we get a decomposition on curves
of length less or equal to 1 (possibly entering/exiting the domain).
By a slight abuse of notation we still denote $\Cone$ such a
set of curves.
One finds that
\begin{equation}\label{eq:SmirnovCurves}
\sigma = \int_{\Cone} \lambda d\mu(\lambda), \quad
|\sigma| = \int_{\Cone} |\lambda| d\mu(\lambda), 
\end{equation}
with now $\lambda\in\Cone$, the curves of length at most one in $\OS$.
\smallskip

\begin{remark}\textup{
Theorem~B in~\cite{Smirnov} is a more precise statement.
It shows that one can obtain a similar decomposition with
now curves $\lambda$ with $\Div\lambda=0$ a.e.:
being either finite curves entering and exiting the domain,
or ``elementary solenoids'', which are objects of the form
\begin{align*}
& \lambda= \M-\lim_{k\to\infty}\frac{1}{2k} f_\sharp \overrightarrow{[-k,k]} \restr \OS\\
& \textup{Lip}(f)\le 1 \\
& \textup{var}(\lambda)=1\, \\
& f(\R)\subset \spt(\lambda),
\end{align*}
(in particular one should have $|f'(t)|=1$ \ale),
meaning that for any $\vp\in C_c^1(\OS)$,
\begin{equation}
\label{eq:ES}
\lambda(\vp) = \lim_{s\to\infty} \frac{1}{2s}\int_{-s}^s \sca{f'(t)}{\vp(f(t))}dt.
\end{equation}
This expresses that either $\lambda$ is defined by the closed curve
$f(\R)$ (if $f$ is periodic), or $\lambda$ is a limit of curves which
densify and do not loose mass in the limit. We do not need such a
precise result for our construction.
}
\end{remark}

\end{document}